\author{Michael Klotz\footnote{Technische Universit\"at Darmstadt, Schlo\ss gartenstra\ss e 7, D-64289 Darmstadt, Deutschland,\newline klotz@mathematik.tu-darmstadt.de}}
\date{}
\title{Subspaces and Quotients of Banach Symmetric Spaces}
\theoremstyle{definition} 
\newtheorem{definition}{Definition}[section]
\theoremstyle{plain} 
\newtheorem{theorem}[definition]{Theorem}	
\newtheorem{proposition}[definition]{Proposition}	
\newtheorem{lemma}[definition]{Lemma}
\newtheorem{corollary}[definition]{Corollary}	
\theoremstyle{definition}
\newtheorem{example}[definition]{Example}
\newtheorem{remark}[definition]{Remark}
\newtheorem{problem}[definition]{Problem}
\newenvironment{acknowledgements}{\section*{Acknowledgements}}{}
\newcommand{\temp}{0} 
	{\renewcommand{\temp}{\arraystretch} \renewcommand{\arraystretch}{#1}}
	{\renewcommand{\arraystretch}{\temp}}
\newcommand{\NN}{\mathbb N}
\newcommand{\RR}{\mathbb R}
\newcommand{\eins}{{\bf 1}}
\newcommand{\calA}{{\cal A}}
\newcommand{\calU}{{\cal U}}
\DeclareMathOperator{\ad}{ad}
\DeclareMathOperator{\Ad}{Ad}
\DeclareMathOperator{\Aut}{Aut}
\DeclareMathOperator{\Der}{Der}
\DeclareMathOperator{\Exp}{Exp}
\DeclareMathOperator{\ev}{ev}
\DeclareMathOperator{\flow}{Fl}
\DeclareMathOperator{\Fr}{Fr}
\DeclareMathOperator{\GL}{GL}
\DeclareMathOperator{\id}{id}
\DeclareMathOperator{\im}{im}
\DeclareMathOperator{\Inn}{Inn}
\DeclareMathOperator{\Lts}{Lts}
\DeclareMathOperator{\Sym}{Sym}
\newcommand{\gl}{\mathfrak{gl}}
\newcommand{\calLts}{\mathfrak{Lts}}
\begin{document}
\maketitle
%
%
%
%
\begin{abstract}
	We derive a necessary and sufficient condition for the existence of symmetric space structures on quotients of Banach symmetric spaces. Along the way, we investigate the different kinds of reflection subspaces and their Lie triple systems.\\[-0.5\baselineskip]
				
	\noindent Keywords: Banach symmetric space, Lie triple system, reflection subspace, quotient\\[-0.5\baselineskip]
	
	\noindent MSC2010: 53C35, 22E65
\end{abstract}
%
%
%
\section{Introduction}
\label{introduction}
A \emph{symmetric space} in the sense of O.~Loos (cf.\ \cite{Loo69}) is a smooth manifold $M$ endowed with a smooth multiplication map $\mu\colon M\times M\rightarrow M$ such that each left multiplication map $\mu_x:=\mu(x,\cdot)$ (with $x\in M$) is an involutive automorphism of $(M,\mu)$ with isolated fixed\linebreak point $x$.

Some basic material on infinite-dimensional symmetric spaces can be found in \cite{Nee02Cartan} and \cite{Ber08}. In \cite{Klo09b}, the author starts working towards a Lie theory of symmetric spaces modelled on Banach spaces. In particular, there are presented an integrability theorem for morphisms of Lie triple systems and the result that the automorphism group of a connected symmetric space $M$ is a Banach--Lie group acting smoothly and transitively on $M$.
The purpose of this paper is to continue on this path.

A \emph{reflection subspace} $N$ of a symmetric space $M$ is a subset that is stable under multiplication. If a reflection subspace is endowed with the structure of a symmetric space such that the inclusion map $\iota\colon N\rightarrow M$ is smooth and its tangent map induces at each $x\in N$ a topological embedding $T_x\iota$, then we call it an \emph{integral subspace}. In \cite{Loo69}, O.~Loos shows that, in the finite-dimensional case, an integral subspace of a pointed symmetric space $(M,b)$ induces a closed triple subsystem of $\Lts(M,b)$ and conversely, given any closed triple subsystem $\mathfrak{n}\leq \Lts(M,b)$, there is a unique connected integral subspace $(N,b)$ with Lie triple system $\mathfrak{n}$. We shall see that this one-to-one correspondence carries over to the Banach case.
If $\mathfrak{n}$ is separable, we further have $\mathfrak{n}=\{x\in\Lts(M,b)\colon \Exp_{(M,b)}(\RR x)\subseteq N\}$.

If a reflection subspace $N\leq M$ is a submanifold, then we call it a \emph{symmetric subspace}. We shall see that a symmetric subspace is just an integral subspace whose topology is induced by $M$. A natural chart of $N$ at $b$ can be obtained as a restriction of an exponential chart of $(M,b)$. The closed triple subsystem $\mathfrak{n}\leq \Lts(M,b)$ induced
by $(N,b)$ satisfies\linebreak $\mathfrak{n}=\{x\in\Lts(M,b)\colon \Exp_{(M,b)}(\RR x)\subseteq N\}$ even if it is not separable.
As a consequence, we observe that preimages of symmetric subspaces under morphisms are symmetric subspaces. We further show that a connected integral subspace $(N,b)$ of $(M,b)$ whose Lie triple system $\mathfrak{n}\leq \Lts(M,b)$ splits as a Banach space is a symmetric subspace if and only if there exists a 0-neighborhood $W\subseteq F$ in the complement $F$ of $\mathfrak{n}$ such that $N\cap\Exp_{(M,b)}(W)=\{b\}$.

In the finite-dimensional case, O.~Loos shows that a closed reflection subspace can be naturally turned into a symmetric subspace. Similarly to the matter of turning a closed subgroup of a Lie group into a Lie subgroup, that can not be generalized to the Banach case, but we can still assign Lie triple systems to closed reflection subspaces: Given a closed reflection subspace $(N,b)\leq (M,b)$, the set $\mathfrak{n}:=\{x\in\Lts(M,b)\colon \Exp_{(M,b)}(\RR x)\subseteq N\}$ is a closed subtriple system of $\Lts(M,b)$. To see this, we consider the Trotter product formula and the commutator formula for Lie groups and find comparable formulas for symmetric spaces.

An equivalence relation $R\subseteq M\times M$ on a symmetric space $M$ that is a reflection subspace of $M\times M$ is called a \emph{congruence relation}. If $M$ is connected, then $R$ is determined by each of its equivalence classes, which we then call \emph{normal reflection subspaces} of $M$.
Given a closed connected normal reflection subspace $N\unlhd M$ with associated congruence relation $R\leq M\times M$ on $M$, we show that the quotient $M/R$ can be made a symmetric space such that the natural quotient map $\pi\colon M\rightarrow M/R$ is a morphism of symmetric spaces and a ``weak'' submersion\footnote{By the term \emph{``weak'' submersion} we mean that the tangent map of $\pi$ induces at each $x\in M$ a linear quotient map $T_x\pi$.} if and only if $N$ is a symmetric subspace of $M$. The corresponding assertion in the finite-dimensional case can be found in \cite{Loo69}.
Along the way, we show that the Lie triple system $\mathfrak{n}\leq\Lts(M,b)$ of a closed normal reflection subspace $(N,b)\unlhd (M,b)$ is a (closed) ideal and that, given a closed ideal $\mathfrak{n}\unlhd\Lts(M,b)$, the corresponding connected integral reflection subspace $(N,b)\leq (M,b)$ is normal.

The guiding philosophy of our work is that a connected symmetric space actually is a Banach homogeneous space: It can be identified with the quotient of its automorphism group by a stabilizer subgroup (cf.\ \cite{Klo09b}). This is based on the theorem that its automorphism group is a Banach--Lie group. Therefore, we study symmetric Lie groups, symmetric pairs and the functor $\Sym$ that assigns to a symmetric pair its quotient symmetric space.

We can apply the results of this paper to give an integrability criterion for Lie triple systems. In the context of Banach--Lie groups, it is a well-known result that a Banach--Lie algebra $\mathfrak{g}$ is integrable if and only if its period group $\Pi(\mathfrak{g})$ (which is a subgroup of the center of $\mathfrak{g})$ is discrete (cf.\ \cite{EK64} and \cite{GN03}). In \cite{Klo10Integrability}, the author shows a similar statement for Lie triple systems.
\tableofcontents

%
%
%
\section{Basic Concepts and Notation} \label{sec:basicConcepts}
%
%
\subsection{Terminology for Submanifolds and Lie Subgroups}
\label{sec:terminologySub}
A subset $N$ of a smooth Banach manifold $M$ is called a \emph{local submanifold at $x\in N$} if there exists a chart $\varphi\colon U\rightarrow V\subseteq E$ of $M$ at $x$ and a closed subspace $F$ of $E$ such that $\varphi(U\cap N)=V\cap F$. If $N$ is a local submanifold at each $x\in N$, then it is called a \emph{submanifold} and we obtain charts $\varphi|_{U\cap N}^{V\cap F}$ for $N$ that define on $N$ the structure of a manifold, which is compatible with the subspace topology. If each subspace $F$ splits in $E$ as a Banach space, then $N$ is called a \emph{split submanifold}.
Note that, for each $x\in N$, the tangent map $T_x\iota$ of the inclusion map $\iota\colon N\hookrightarrow M$ is a (closed) topological embedding. A submanifold $N$ splits if and only if the inclusion map $\iota$ is an immersion, i.e., if for each $x\in N$, the image of the topological embedding $T_x\iota$ splits as a Banach space.

A subgroup $H$ of a Banach--Lie group $G$ is called a \emph{Lie subgroup} if it is a submanifold of $G$. It is called a \emph{split Lie subgroup} if it is even a split submanifold.
Every Lie subgroup is closed. Given a Lie subgroup $H\leq G$ with inclusion map $\iota\colon H\hookrightarrow G$, the Lie algebra $L(H)$ of $H$ can be identified with $\mathfrak{h}:=L(\iota)(L(H))$, which is given by $\mathfrak{h}=\{x\in L(G)\colon \exp_G(\RR x)\subseteq H\}$. There exists an open $0$-neighborhood $V\subseteq L(G)$ such that $\exp_G|_V$ is a local diffeomorphism onto an open subset of $M$ and $\exp_G(V\cap \mathfrak{h})=\exp_G(V)\cap H$ (cf.\ \cite[Th.~IV.3.3]{Nee06} or \cite[Prop.~3.4]{Hof75}).

An \emph{integral subgroup} of $G$ is a subgroup $H\leq G$ endowed with a Banach--Lie group structure such that the inclusion map $\iota\colon H\rightarrow G$ is smooth and $L(\iota)$ is a topological embedding.
Given some closed subalgebra $\mathfrak{h}$ of the Lie algebra $L(G)$ of $G$, then the subgroup $H:=\langle\exp_G(\mathfrak{h})\rangle$ carries a unique structure of a connected integral subgroup of $G$ with Lie algebra $\mathfrak{h}$ (cf.\ \cite[Satz~12.3]{Mai62}).
If $\mathfrak{h}$ is a closed ideal in $L(G)$, then the subgroup $H$ is normal in the connected component $G_0$ of the identity (cf.\ \cite[Satz~12.6]{Mai62}). 
If the Lie algebra of an integral subgroup $H$ of $G$ is separable, i.e., if it contains a countable dense subset, then we have $L(\iota)(L(H))=\{x\in L(G)\colon \exp_G(\RR x)\subseteq H\}$ (cf.\ \cite[Th.~IV.4.14]{Nee06}).
Note that an integral subgroup $H\leq G$ that is compatible with the subspace topology is a Lie subgroup.
%

%
%
%
%
\subsection{Lie Triple Systems and Symmetric Spaces} \label{sec:LtsAndSymSpace}
A \emph{Lie triple system} (cf.\ \cite{Loo69}) is a Banach space $\mathfrak{m}$ endowed with a continuous trilinear map\linebreak $[\cdot,\cdot,\cdot]\colon \mathfrak{m}^3\rightarrow\mathfrak{m}$ that satisfies $[x,x,y]=0$ and $[x,y,z] + [y,z,x] + [z,x,y] = 0$ as well as
\[[x,y,[u,v,w]] \ =\ [[x,y,u],v,w]+[u,[x,y,v],w]+[u,v,[x,y,w]]\]
for all $x,y,z,u,v,w\in \mathfrak{m}$. Continuous linear maps between Lie triple systems that are compatible with the Lie brackets are called \emph{morphisms}. A subspace $\mathfrak{n}$ of $\mathfrak{m}$ is called a \emph{triple subsystem} (denoted by $\mathfrak{n}\leq \mathfrak{m}$) if it is stable under the Lie bracket. If it satisfies the stronger condition $[\mathfrak{n},\mathfrak{m},\mathfrak{m}]\subseteq \mathfrak{n}$, then it is called an \emph{ideal} and we write $\mathfrak{n} \unlhd \mathfrak{m}$. An ideal $\mathfrak{n}\unlhd\mathfrak{m}$ automatically satisfies also the conditions $[\mathfrak{m},\mathfrak{n},\mathfrak{m}]\subseteq \mathfrak{n}$ and $[\mathfrak{m},\mathfrak{m},\mathfrak{n}]\subseteq \mathfrak{n}$.

A \emph{reflection space} (cf.\ \cite{Loo67a,Loo67b}) is a set $M$ endowed with a multiplication map $\mu\colon M\times M\rightarrow M$, $(x,y)\mapsto x\cdot y$, such that each left multiplication map $\mu_x:=\mu(x,\cdot)$ (with $x\in M$) is an involutive automorphism of $(M,\mu)$ with fixed point $x$. A subset of $M$ that is stable under $\mu$ is called a \emph{reflection subspace}. Given a subset $S\subseteq M$, we denote by $\langle S\rangle\leq M$ the \emph{generated reflection subspace}, i.e., the smallest reflection subspace of $M$ that contains $S$.
A \emph{symmetric space} is a smooth Banach manifold $M$ endowed with a smooth multiplication map $\mu\colon M\times M\rightarrow M$ such that $(M,\mu)$ is a reflection space for which each $x\in M$ is an isolated fixed point of the symmetry $\mu_x$. Maps between reflection spaces that are compatible with multiplication are called \emph{homomorphisms}. Concerning symmetric spaces, we refer to smooth homomorphisms as \emph{morphisms}.
If there is no confusion, we usually denote a reflection space (resp., symmetric space) simply by $M$ instead by $(M,\mu)$.

The following facts about symmetric spaces can be essentially found in \cite{Klo09b}, which is partially due to \cite{Ber08} and \cite{Nee02Cartan}.
The tangent bundle $TM$ endowed with the multiplication $T\mu$ is a symmetric space. In each tangent space $T_xM$ (with $x\in M$), the product satisfies $v\cdot w =2v - w$. A smooth vector field on $M$ is called a \emph{derivation} if it is a morphism of symmetric spaces. Note that every derivation is a complete vector field. The set $\Der(M)$ of all derivations is a Lie subalgebra of the Lie algebra of all smooth vector fields on $M$.\footnote{Here, the term \emph{Lie algebra} does not include a topological structure.} Given a distinguished point $b\in M$, called the \emph{base point}, the symmetry $\mu_b$ induces an involutive automorphism $(\mu_b)_\ast$ of $\Der(M)$ given by $(\mu_b)_\ast(\xi):=T\mu_b\circ\xi\circ\mu_b$. The (+1)-eigenspace $\Der(M)_+$ of $(\mu_b)_\ast$ is a subalgebra of $\Der(M)$ and coincides with the kernel of the evaluation map $\ev_b\colon\Der(M)\rightarrow T_bM$, $\xi\mapsto \xi(b)$. The (-1)-eigenspace $\Der(M)_-$ of $(\mu_b)_\ast$ is stable under the triple bracket $[\cdot,\cdot,\cdot]:=[[\cdot,\cdot],\cdot]$. Via the evaluation isomorphism $\ev_b|_{\Der(M)_-}\colon\Der(M)_-\rightarrow T_bM$ of vector spaces, the tangent space $T_bM$ can be equipped with that triple bracket. It becomes a \emph{Lie triple system} that we denote by $\Lts(M,b)$. Assigning to each morphism of pointed symmetric spaces its tangent map at the base point, we obtain a covariant functor $\Lts$ (called the \emph{Lie functor}) from the category of pointed symmetric spaces to the category of Lie triple systems.

A Banach space can be considered as a symmetric space with natural multiplication $x\cdot y :=2x-y$. From this perspective, a smooth curve $\alpha\colon \RR\rightarrow M$ is called a \emph{one-parameter subspace of $M$} if $\alpha$ is a morphism of symmetric spaces. For each $v\in \Lts(M,b)$, there is a unique one-parameter subspace $\alpha_v$ with $\alpha_v^\prime(0)=v$. The map
$\Exp_{(M,b)}\colon \Lts(M,b)\rightarrow M$, $v\mapsto \alpha_v(1)$
is called the \emph{exponential map of $(M,b)$}. It is a smooth map with $T_0\Exp_{(M,b)}=\id_{\Lts(M,b)}$, so that it is a local diffeomorphism at $0$ and hence admits restrictions that are charts at $b$ (called \emph{normal charts}).
A morphism $f\colon (M_1,b_1)\rightarrow (M_2,b_2)$ of pointed symmetric spaces intertwines the exponential maps in the sense that
$f\circ \Exp_{(M_1,b_1)} = \Exp_{(M_2,b_2)} \circ \Lts(f)$. For details concerning the exponential map, see \cite{Klo09b}, whose approach is based on affine connections.\footnote{In \cite{Klo09b}, one-parameter subspaces are not dealt with, but it is easy to check that they coincide with geodesics. Cf.\ \cite[p.~87]{Loo69} for the finite-dimensional case.}

Given a one-parameter subspace $\alpha$, we call the automorphisms $\tau_{\alpha,s}:=\mu_{\alpha(\frac{1}{2}s)}\circ\mu_{\alpha(0)}$, $s\in\RR$, of $M$ \emph{translations along $\alpha$}. They satisfy $\tau_{\alpha,s}(\alpha(t))=\alpha(t+s)$ for all $t\in\RR$.
If $M$ is connected, then any two points can be joined by a sequence of one-parameter subspaces, since we have normal charts. Therefore, in view of the identities $\alpha(1)=(\tau_{\alpha,\frac{1}{n}})^n(\alpha(0))$ for all $n\in\NN$, it is easy to see that a connected $M$ is generated by each subset $U\subseteq M$ with non-empty interior. As a consequence, the \emph{basic connected component} $M_0$ of $(M,b)$ is generated by the image of the exponential map $\Exp_{(M,b)}$.

The automorphism group $\Aut(M)$ of a reflection space $M$ (resp., of a symmetric space) has two important (normal) subgroups:
The set of all symmetries $\mu_x$, $x\in M$, generates a subgroup which is denoted by $\Inn(M)$ and is called the \emph{group of inner automorphisms}. The subgroup $G(M)$ generated by all products $\mu_x\mu_y$, $x,y\in M$, is called the \emph{group of displacements} (cf.\ \cite[p.~64]{Loo69}).
For a connected symmetric space $M$, these groups of automorphisms act transitively on $M$, since there are translations along one-parameter subspaces.

Given a homomorphism $f\colon M_1\rightarrow M_2$, then for all $g_1\in\Inn (M_1)$, there exists a (not necessarily unique) $g_2\in\Inn(M_2)$ with
\begin{equation} \label{eqn:imageOfInnAut}
	f\circ g_1 \ =\ g_2 \circ f,
\end{equation}
because for a decomposition $g_1= \mu_{x_1}\mu_{x_2}\cdots\mu_{x_n}$, we can put $g_2:=\mu_{f(x_1)}\mu_{f(x_2)}\cdots\mu_{f(x_n)}$. As a consequence, a homomorphism $f\colon M_1\rightarrow M_2$ of reflection spaces that is locally smooth around some $b\in M_1$ is automatically smooth (and hence a morphism of symmetric spaces) if $M_1$ is $\Inn(M_1)$-transitive. Thus, given pointed symmetric spaces $(M_1,b_1)$ and $(M_2,b_2)$ with Lie triple systems $\mathfrak{m}_1$ and $\mathfrak{m}_2$, respectively, then a homomorphism $f\colon (M_1,b_1)\rightarrow (M_2,b_2)$ of pointed reflection spaces that satisfies
$f\circ \Exp_{(M_1,b_1)} = \Exp_{(M_2,b_2)} \circ A$
for some continuous linear map $A\colon \mathfrak{m}_1\rightarrow \mathfrak{m}_2$  is a morphism of pointed symmetric spaces with $\Lts(f)=A$.

%

%
%
\subsection{Symmetric Lie Algebras and Lie Groups}\label{sec:symLieAlgAndLieGrp}
A \emph{symmetric Lie algebra} is a Banach--Lie algebra $\mathfrak{g}$ endowed with an involutive automorphism $\theta$ of $\mathfrak{g}$, i.e., $\theta^2=\id_\mathfrak{g}$. Given two symmetric Lie algebras $(\mathfrak{g}_1,\theta_1)$ and $(\mathfrak{g}_2,\theta_2)$, a continuous Lie algebra homomorphism $A\colon \mathfrak{g}_1 \rightarrow \mathfrak{g}_2$ is called a \emph{morphism of symmetric Lie algebras} if it satisfies $A\circ\theta_1=\theta_2\circ A$.
The kernel of a morphism $A\colon (\mathfrak{g}_1,\theta_1)\rightarrow (\mathfrak{g}_2,\theta_2)$ satisfies $\theta_1(\ker(A))\subseteq\ker(A)$, so that $(\ker(A),\theta_1|_{\ker(A)}^{\ker(A)})$ is a symmetric Lie algebra.
A symmetric Lie algebra $(\mathfrak{g},\theta)$ decomposes as the direct sum of its $(\pm 1)$-eigenspaces denoted by $\mathfrak{g}=\mathfrak{g}_+\oplus \mathfrak{g}_-$. The $(+1)$-eigenspace is a subalgebra of $\mathfrak{g}$. The $(-1)$-eigenspace $\mathfrak{g}_-$ becomes a Lie triple system by defining $[x,y,z]:=[[x,y],z]$ (cf.\ \cite[Prop.~5.9]{Klo09b}).
The adjoint representation $\ad\colon \mathfrak{g}\rightarrow \gl(\mathfrak{g})$, $x\rightarrow [x,\cdot]$ induces a representation $\mathfrak{g}_+\rightarrow \gl(\mathfrak{g}_-)$, $x \mapsto [x,\cdot]|_{\mathfrak{g}_-}$.
Assigning to each symmetric Lie algebra $(\mathfrak{g},\theta)$ the Lie triple system $\mathfrak{g}_-$ and to each morphism of symmetric Lie algebras its restriction to the Lie triple systems, we obtain a covariant functor $\calLts$.

%

A \emph{symmetric Lie group} is a Banach--Lie group $G$ together with an involutive automorphism $\sigma$ of $G$. A \emph{morphism between symmetric Lie groups $(G_1,\sigma_1)$ and $(G_2,\sigma_2)$} is a smooth group homomorphism $f\colon G_1\rightarrow G_2$ such that $f\circ\sigma_1=\sigma_2\circ f$.
The Lie functor $L$ assigns to each $(G,\sigma)$ the \emph{symmetric Lie algebra} $L(G,\sigma):=(L(G),L(\sigma))$ and to each morphism $f$ the morphism $L(f)$ of symmetric Lie algebras.
The kernel $\ker(f)\unlhd G_1$ of a morphism $f$ is a Lie subgroup (cf.\ \cite[Th.~II.2]{GN03}) that satisfies $\sigma_1(\ker(f))\subseteq\ker(f)$, so that $(\ker(f),\sigma_1|_{\ker(f)}^{\ker(f)})$ is a symmetric Lie group. Its Lie algebra is $(\ker(L(f)),L(\sigma_1)|_{\ker(L(f))}^{\ker(L(f))})$.

Given a symmetric Lie group $(G,\sigma)$ with symmetric Lie algebra $(\mathfrak{g},\theta)$, the subgroup $G^\sigma:=\{g\in G\colon \sigma(g)=g\}$ of $\sigma$-fixed points is a split Lie subgroup with Lie algebra $\mathfrak{g}_+$ (cf.\ \cite[Ex.~3.9]{Nee02Cartan}). Open subgroups of $G^\sigma$ are given by subgroups $K\leq G^\sigma$ satisfying $(G^\sigma)_0\subseteq K \subseteq G^\sigma$, where $(G^\sigma)_0$ denotes the identity component. For such a subgroup $K$, we call $((G,\sigma),K)$ a \emph{symmetric pair} and shall rather write $(G,\sigma,K)$.\footnote{Note that other authors do not include the involution $\sigma$ in their definition, but require its existence (cf.\ \cite{Hel01}).}
The quotient space $\Sym(G,\sigma,K):=G/K$ carries the structure of a pointed symmetric space with multiplication
$$gK \cdot hK:=g\sigma(g)^{-1}\sigma(h)K$$
and base point $K$ such that the quotient map $q\colon G\rightarrow G/K$ is a submersion.
Note that $q$ is a principal bundle with structure group $K$ 
that acts on $G$ by right translations (cf.\ \cite[III.1.5--6]{Bou89LieGroups}).
When we consider the underlying symmetric space of $G/K$ that is not pointed, then we shall frequently write $\calU(G/K)$ to prevent confusion.
The Lie triple system $\Lts(G/K)$ can be identified with $\mathfrak{g}_-$ via the isomorphism $(T_{\eins}q)|_{\mathfrak{g}_-}$. Then the exponential map of $G/K$ is given by
\begin{equation}\label{eqn:Exp=q exp}
	\Exp_{G/K}:=q\circ\exp_G|_{\mathfrak{g}_-}\colon \mathfrak{g}_- \rightarrow G/K,
\end{equation}
where $\exp_G$ denotes the exponential map of the Lie group $G$. For further details, cf.\ \cite[Ex.~3.9]{Nee02Cartan}.

Considering the transitive smooth action
\begin{equation} \label{eqn:tau}
	\tau\colon G\times G/K\rightarrow G/K,\ (g,hK)\mapsto ghK,
\end{equation}
it is easy to check that the corresponding diffeomorphisms $\tau_g\colon G/K\rightarrow G/K$, $hK\mapsto ghK$ are automorphisms of the symmetric space $\calU(G/K)$ and that
\begin{equation} \label{eqn:tau_g^2}
	\tau_g^2 \ =\  \mu_{gK}\circ \mu_K \quad\mbox{for all $g\in G$ with $\sigma(g)=g^{-1}$}.
\end{equation}


A \emph{morphism between symmetric pairs $(G_1,\sigma_1,K_1)$ and $(G_2,\sigma_2,K_2)$} is a morphism \linebreak $f\colon (G_1,\sigma_1)\rightarrow (G_2,\sigma_2)$ of symmetric Lie groups satisfying $f(K_1)\subseteq K_2$. Note that $f(G_1^{\sigma_1})\subseteq G_2^{\sigma_2}$ and $f((G_1^{\sigma_1})_0)\subseteq (G_2^{\sigma_2})_0$ are always satisfied and that we moreover have $f((G_1^{\sigma_1})_0)= (G_2^{\sigma_2})_0$ if $L(f)((\mathfrak{g}_1)_+)=(\mathfrak{g}_2)_+$.\footnote{Indeed, we have $f((G_1^{\sigma_1})_0) \ =\ \langle f(\exp_{G_1}((\mathfrak{g}_1)_+))\rangle \ =\ \langle\exp_{G_2}((\mathfrak{g}_2)_+) \rangle \ =\ (G_2^{\sigma_2})_0.$}\label{page:L(f)(g1_+)=g2_+}
Every morphism $f$ induces a unique map
$$\Sym(f):=f_\ast\colon G_1/K_1 \rightarrow G_2/K_2$$
with $\Sym(f)\circ q_1 = q_2\circ f$ that is automatically a morphism of pointed symmetric spaces.
The assignment $\Sym$ is a covariant functor from the category of symmetric pairs to the category of pointed symmetric spaces. Denoting by $F$ the forgetful functor from the category of symmetric pairs to the category of symmetric Lie groups, we have
\begin{equation} \label{eqn:LtsSym=calLtsL}
	\Lts\circ\Sym \ =\ \calLts\circ L \circ F
\end{equation}
(where we read $L$ as the Lie functor applied to symmetric Lie groups). Indeed, the map $\Lts(\Sym(f))\colon (\mathfrak{g}_1)_- \rightarrow (\mathfrak{g}_2)_-$ is given by the restriction $L(f)|_{(\mathfrak{g}_1)_-}^{(\mathfrak{g}_2)_-}$ of $L(f)$, because  we have
\begin{eqnarray*}
	\Exp_{G_2/K_2}\circ L(f)|_{(\mathfrak{g}_1)_-}^{(\mathfrak{g}_2)_-} &=& q_2\circ\exp_{G_2} \circ L(f)|_{(\mathfrak{g}_1)_-}^{(\mathfrak{g}_2)_-} \ =\ q_2\circ f\circ \exp_{G_1}|_{(\mathfrak{g}_1)_-} \\
	&=& \Sym(f)\circ q_1 \circ \exp_{G_1}|_{(\mathfrak{g}_1)_-} \ =\ \Sym(f)\circ \Exp_{G_1/K_1}.
\end{eqnarray*}

Given a morphism $f\colon (G_1,\sigma_1,K_1)\rightarrow (G_2,\sigma_2,K_2)$ of symmetric pairs, then for all $g\in G_1$, we have
\begin{equation} \label{eqn:SymTau}
	\Sym(f)\circ \tau_g^{G_1} \ =\ \tau_{f(g)}^{G_2}\circ \Sym(f),
\end{equation}
where $\tau^{G_i}$ (with $i=1,2$) denotes the natural action of $G_i$ on $G_i/K_i$ (cf.\ (\ref{eqn:tau})).
\begin{lemma}
\label{lem:symToInjevtive}
	Let $f\colon (G_1,\sigma_1)\rightarrow (G_2,\sigma_2)$ be an injective morphism of a symmetric Lie group $(G_1,\sigma_1)$ to the underlying symmetric Lie group of a symmetric pair $(G_2,\sigma_2,K_2)$. Then $(G_1,\sigma_1,K_1)$ with $K_1:=f^{-1}(K_2)$ is a symmetric pair turning $f$ into a morphism of symmetric pairs such that the morphism $\Sym(f)\colon G_1/K_1\rightarrow G_2/K_2$ is also injective.	(Note that $G_1^{\sigma_1}=f^{-1}(G_2^{\sigma_2})$.)
\end{lemma}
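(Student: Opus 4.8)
The plan is to verify the three assertions in turn: that $K_1$ satisfies $(G_1^{\sigma_1})_0\subseteq K_1\subseteq G_1^{\sigma_1}$, so that $(G_1,\sigma_1,K_1)$ is a symmetric pair; that $f$ respects it; and that $\Sym(f)$ is injective. I would first dispose of the parenthetical claim: for $g\in G_1$ the relation $f\circ\sigma_1=\sigma_2\circ f$ yields $f(\sigma_1(g))=\sigma_2(f(g))$, so $\sigma_1(g)=g$ implies $f(g)\in G_2^{\sigma_2}$, while conversely $f(g)\in G_2^{\sigma_2}$ gives $f(\sigma_1(g))=f(g)$ and hence $\sigma_1(g)=g$ by injectivity of $f$; thus $G_1^{\sigma_1}=f^{-1}(G_2^{\sigma_2})$. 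Since $K_2\subseteq G_2^{\sigma_2}$ and $f$ is a group homomorphism, $K_1:=f^{-1}(K_2)$ is a subgroup of $G_1$ contained in $f^{-1}(G_2^{\sigma_2})=G_1^{\sigma_1}$.

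Next I would show that $K_1$ is open in $G_1^{\sigma_1}$. The fixed point group $G_1^{\sigma_1}$ is a split Lie subgroup of $G_1$ and hence carries the subspace topology; writing the open subgroup $K_2$ of $G_2^{\sigma_2}$ as $K_2=W\cap G_2^{\sigma_2}$ for some open $W\subseteq G_2$, continuity of $f$ gives that $K_1=f^{-1}(W)\cap G_1^{\sigma_1}$ is open in $G_1^{\sigma_1}$. An open subgroup of a topological group contains the identity component, so $(G_1^{\sigma_1})_0\subseteq K_1\subseteq G_1^{\sigma_1}$, and by the characterization of open subgroups of $G^{\sigma}$ recalled above, $(G_1,\sigma_1,K_1)$ is a symmetric pair. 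Since $f(K_1)=f(f^{-1}(K_2))\subseteq K_2$, the map $f$ is then a morphism of symmetric pairs, so $\Sym(f)\colon G_1/K_1\rightarrow G_2/K_2$ is defined, given by $gK_1\mapsto f(g)K_2$, and is a morphism of pointed symmetric spaces.

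It remains to check injectivity of $\Sym(f)$, which is formal: if $f(g)K_2=f(h)K_2$, then $f(h^{-1}g)=f(h)^{-1}f(g)\in K_2$, hence $h^{-1}g\in f^{-1}(K_2)=K_1$, i.e.\ $gK_1=hK_1$. The only delicate point in the whole argument is the verification that $K_1$ is open in $G_1^{\sigma_1}$ for the correct topology, namely the subspace topology inherited from $G_1$; this is what forces $(G_1^{\sigma_1})_0\subseteq K_1$ and is legitimate precisely because $G_1^{\sigma_1}$ is a Lie subgroup. Note in particular that $f$ need not be a topological embedding and that no such assumption is used.
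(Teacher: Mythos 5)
Your proof is correct and follows essentially the same route as the paper's: establish $G_1^{\sigma_1}=f^{-1}(G_2^{\sigma_2})$ via injectivity of $f$, deduce that $K_1=f^{-1}(K_2)$ is an open subgroup of $G_1^{\sigma_1}$ by continuity, and verify injectivity of $\Sym(f)$ by the formal coset computation $f(h^{-1}g)\in K_2\Rightarrow h^{-1}g\in K_1$. The only difference is that you spell out the openness of $K_1$ in the subspace topology of $G_1^{\sigma_1}$ in more detail than the paper, which simply asserts it; this is a harmless (and arguably welcome) elaboration.
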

\begin{proof}
	It is easy to see that for all $x\in G_1$, the condition $x\in G_1^{\sigma_1}$ holds if and only if $f(x) \in G_2^{\sigma_2}$. Hence, we have $G_1^{\sigma_1}=f^{-1}(G_2^{\sigma_2})$, so that $K_1:=f^{-1}(K_2)$ is an open subset of $G_1^{\sigma_1}$, the subgroup $K_2$ being open in $G_2^{\sigma_2}$. It is clear that $f$ becomes a morphism of symmetric pairs. To see the injectivity of $\Sym(f)$, we take any $gK_1, hK_1\in G_1/K_1$ satisfying $\Sym(f)(gK_1)=\Sym(f)(hK_1)$ and shall show $gK_1=hK_1$. Indeed, by assumption, we have $f(g)K_2=f(h)K_2$, entailing $f(h^{-1}g)\in K_2$, so that $h^{-1}g\in K_1$. Hence, we obtain $gK_1=hK_1$.
\end{proof}
\subsection{The Automorphism Group of a Connected Symmetric Space}
\label{sec:AutOfConnectedSymSpace}
Let $M$ be a connected symmetric space. The Lie algebra $\Der(M)$ of derivations
can be uniquely turned into a Banach--Lie algebra such that for each frame $p$ in the frame bundle $\Fr(M)$ over $M$, the map
$$\Der(M)\rightarrow T_p(\Fr(M)),\ \xi\mapsto \left.\frac{d}{dt}\right|_{t=0}\Fr(\flow^{\xi}_t)(p)$$
is an embedding of Banach spaces, where $\flow^{\xi}_t$ is the time-$t$-flow of $\xi$ and $\Fr(\flow^{\xi}_t)$ is its induced automorphism of the frame bundle (cf.\ \cite[Cor.~5.18]{Klo09b}).
Given a base point $b\in M$, the involutive automorphism $(\mu_b)_\ast$ of the Lie algebra $\Der(M)$ is actually continuous and hence an automorphism of the Banach--Lie algebra $\Der(M)$, so that $(\Der(M),(\mu_b)_\ast)$ is a symmetric (Banach--)Lie algebra  and the evaluation map $\ev_b\colon\Der(M)_-\rightarrow \Lts(M,b)$ is an isomorphism of Lie triple systems (cf.\ \cite[Prop.~5.23]{Klo09b}).

The automorphism group $\Aut(M)$ can be turned into a Banach--Lie group such that
$$\exp\colon \Der(M)\rightarrow \Aut(M),\ \xi \mapsto \flow^{-\xi}_1$$
is its exponential map. The natural map $\tau\colon \Aut(M)\times M \rightarrow M$ is a transitive smooth action.
Together with the conjugation map $c_{\mu_b}\colon \Aut(M)\rightarrow \Aut(M)$, $g\mapsto\mu_b\circ g \circ \mu_b$, the automorphism group $\Aut(M)$ becomes a symmetric Lie group with Lie algebra $L(\Aut(M),c_{\mu_b})=(\Der(M),(\mu_b)_\ast)$. The stabilizer subgroup $\Aut(M)_b\leq \Aut(M)$ leads to the symmetric pair $(\Aut(M),c_{\mu_b},\Aut(M)_b)$. The induced symmetric space $\Aut(M)/\Aut(M)_b$ is isomorphic to $M$ via the isomorphism $\Phi\colon \Aut(M)/\Aut(M)_b\rightarrow M$ given by $\Phi(g\Aut(M)_b):=g(b)$. We refer to this fact as the \emph{homogeneity of connected symmetric spaces}. For further details, see \cite[Sec.~5.5]{Klo09b}.

%
%
\section{Reflection Subspaces and Quotients of Symmetric Spaces} \label{sec:SubspacesAndQuotients}
In this section, we deal with reflection subspaces and quotients of symmetric spaces. Taking advantage of the homogeneity of connected symmetric spaces, we shall partially translate knowledge from Lie groups to symmetric spaces. A useful source for writing certain parts of this section was a course held by H.~Gl\"ockner, where analogous concepts for Lie groups were dealt with.\footnote{\emph{Infinite-dimensional Lie groups}, a course held by H.~Gl\"ockner at the Technical University of Darmstadt in the Summer Semester 2005}
%
%
\subsection{Closed Reflection Subspaces} \label{sec:closedSubspaces}
In this subsection, we shall assign Lie triple systems to closed reflection subspaces of pointed symmetric spaces. For this, we require formulas comparable to the well-known Trotter product formula (cf.\ \cite{Tro59}) and the commutator formula in the context of Lie groups (cf.\ \cite[Prop.~8 in III.6.4]{Bou89LieGroups}).
Given a Banach-Lie group $G$ and any $x$ and $y$ in its Lie algebra $L(G)$, we know that
\begin{equation} \label{eqn:TrotterProductFormula}
	\exp(x+y) \ =\ \lim_{k\rightarrow\infty}\Big(\exp\big(\frac{x}{k}\big)\exp\big(\frac{y}{k}\big)\Big)^k
\end{equation}
and
\begin{equation}\label{eqn:commutatorFormula}
	\exp([x,y]) \ =\ \lim_{k\rightarrow\infty}\Big(\exp\big(\frac{x}{k}\big)\exp\big(\frac{y}{k}\big)\exp\big(-\frac{x}{k}\big)\exp\big(-\frac{y}{k}\big)\Big)^{k^2}.
\end{equation}
The following lemma will aid us to translate these known formulas into the context of symmetric spaces.
\begin{lemma} \label{lem:translatorForTrotter}
	Let $(G,\sigma,K)$ be a symmetric pair with quotient map $q\colon G\rightarrow G/K$ and let $\mathfrak{g}=\mathfrak{g}_+ \oplus \mathfrak{g}_-$ be the Lie algebra of its underlying symmetric Lie group $(G,\sigma)$. We denote the exponential maps of $G$ by $\exp$ and of $G/K$ by $\Exp$.
	Given any $n\in\NN$, we have
	$$q\big(\exp(x_n)\exp(y_n)\cdots\exp(x_1)\exp(y_1)\big) \ =\ \big(\mu_{\Exp(\frac{x_n}{2})}\mu_{\Exp(-\frac{y_n}{2})} \cdots \mu_{\Exp(\frac{x_1}{2})}\mu_{\Exp(-\frac{y_1}{2})}\big)(K) $$
	for all $x_1,y_1,\ldots, x_n, y_n\in \mathfrak{g}_-$,
	where $\mu$ denotes the multiplication map on $G/K$.
\end{lemma}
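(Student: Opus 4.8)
The plan is to reduce the whole identity to one elementary transformation rule and iterate it. Write $w_0:=\eins$ and, for $1\le i\le n$,
\begin{equation*}
	w_i \ :=\ \exp(x_i)\exp(y_i)\cdots\exp(x_1)\exp(y_1)\ \in\ G ,
\end{equation*}
and set $P_0:=\id_{G/K}$ and $P_i:=\mu_{\Exp(x_i/2)}\,\mu_{\Exp(-y_i/2)}\,P_{i-1}$ for $1\le i\le n$, so that $P_n$ is precisely the composition of symmetries on the right-hand side of the asserted equation (here $\Exp(\pm y_i/2)$, $\Exp(x_i/2)$ denote $\Exp$ applied to the vectors $\pm y_i/2,\ x_i/2\in\mathfrak g_-$). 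I will show by induction on $i$ that $P_i(K)=q(w_i)=w_iK$; the case $i=n$ is then the claim, since $q(w_n)=q\big(\exp(x_n)\exp(y_n)\cdots\exp(x_1)\exp(y_1)\big)$.

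The single step I need is: for every $v\in\mathfrak g_-$ and every $g\in G$,
\begin{equation*}
	\mu_{\Exp(v/2)}(gK) \ =\ \exp(v)\,\sigma(g)\,K . \tag{$\star$}
\end{equation*}
To prove $(\star)$, recall from $\Exp_{G/K}=q\circ\exp_G|_{\mathfrak g_-}$ that $\Exp(v/2)=\exp(v/2)K$, so the multiplication formula $aK\cdot bK=a\,\sigma(a)^{-1}\sigma(b)\,K$ on $G/K$ yields $\mu_{\Exp(v/2)}(gK)=\exp(v/2)\,\sigma(\exp(v/2))^{-1}\,\sigma(g)\,K$. Since $\mathfrak g_-$ is the $(-1)$-eigenspace of $L(\sigma)$ and $\sigma\circ\exp_G=\exp_G\circ L(\sigma)$, we get $\sigma(\exp(v/2))=\exp(-v/2)=\exp(v/2)^{-1}$, hence $\exp(v/2)\,\sigma(\exp(v/2))^{-1}=\exp(v/2)\exp(v/2)=\exp(v)$, which is $(\star)$. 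Note that both $v=x_i$ and $v=-y_i$ are admissible, $\mathfrak g_-$ being a linear subspace.

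Now the induction. For $i=0$ we have $P_0(K)=K=q(\eins)=q(w_0)$. Assume $P_{i-1}(K)=w_{i-1}K$. Applying $(\star)$ first with $v=-y_i$, $g=w_{i-1}$ and then with $v=x_i$, $g=\exp(-y_i)\sigma(w_{i-1})$, and using that $\sigma$ is an automorphism with $\sigma^2=\id$ together with $\sigma(\exp(-y_i))=\exp(y_i)$, we obtain
\begin{align*}
	P_i(K) \ &=\ \mu_{\Exp(x_i/2)}\,\mu_{\Exp(-y_i/2)}(w_{i-1}K)
	\ =\ \mu_{\Exp(x_i/2)}\big(\exp(-y_i)\,\sigma(w_{i-1})\,K\big)\\
	&=\ \exp(x_i)\,\sigma\big(\exp(-y_i)\sigma(w_{i-1})\big)\,K
	\ =\ \exp(x_i)\exp(y_i)\,w_{i-1}\,K \ =\ w_iK \ =\ q(w_i).
\end{align*}
This closes the induction, so $P_n(K)=q(w_n)$, which is the assertion.

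The argument is essentially bookkeeping once $(\star)$ is available; the only point to watch is that the alternating sign pattern on the right-hand side ($x_i/2$ against $-y_i/2$) is exactly what makes the two copies of $\sigma$ produced by two consecutive applications of $(\star)$ cancel, leaving the untwisted word $\exp(x_i)\exp(y_i)\cdots$ in $G$. I expect no genuine obstacle beyond careful indexing. (A slightly more structural variant: $(\star)$ says $\mu_{\Exp(v/2)}=\tau_{\exp(v)}\circ\mu_K$, where $\tau$ is the left-translation action of $G$ on $G/K$ and $\mu_K$ acts by $gK\mapsto\sigma(g)K$; combining this with $\mu_K\circ\tau_g\circ\mu_K=\tau_{\sigma(g)}$ and $\mu_K^2=\id$ collapses the product of symmetries into a single translation before evaluating at $K$, consistently with the identity $\tau_g^2=\mu_{gK}\circ\mu_K$ recorded above.)
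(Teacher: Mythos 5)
Your proof is correct and rests on the same key identity as the paper's: your $(\star)$, i.e.\ $\mu_{\Exp(v/2)}(gK)=\exp(v)\sigma(g)K$, is exactly the paper's observation that $\mu_{\Exp(v/2)}\circ\mu_K=\tau_{\exp(v)}$, derived in both cases from $\sigma(\exp(v))=\exp(v)^{-1}$ for $v\in\mathfrak{g}_-$ and the multiplication formula on $G/K$. The only difference is bookkeeping: the paper pairs consecutive symmetries into translations $\tau_{\exp(x)\exp(y)}=\mu_{\Exp(x/2)}\circ\mu_{\Exp(-y/2)}$ and composes them, whereas you apply the one-symmetry formula inductively — the same cancellation of the two copies of $\sigma$ in either packaging.
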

\begin{proof}
	For all $x\in\mathfrak{g}_-$, we have $\sigma(\exp(x))=\exp(-x)=(\exp(x))^{-1}$ because of $\sigma\circ\exp=\exp\circ L(\sigma)$ and $L(\sigma)|_{\mathfrak{g}_-}=-\id_{\mathfrak{g}_-}$. It follows that $\tau^2_{\exp(x)}=\mu_{\exp(x)K}\circ\mu_{K} = \mu_{\Exp(x)}\circ\mu_{K}$ for all $x\in\mathfrak{g}_-$ (cf.\ (\ref{eqn:tau_g^2})), so that we have
	$$\tau_{\exp(x)} \ =\ \tau^2_{\exp(\frac{x}{2})} \ =\ \mu_{\Exp(\frac{x}{2})}\circ\mu_{K} \quad\mbox{for all $x\in\mathfrak{g}_-$}$$
	and hence
	$$\tau_{\exp(x)\exp(y)} \ =\ \tau_{\exp(x)}\circ\tau^{-1}_{\exp(-y)} \ =\ \mu_{\Exp(\frac{x}{2})} \circ \mu_{\Exp(-\frac{y}{2})} \quad\mbox{for all $x,y\in\mathfrak{g}_-$.}$$
	Therefore we obtain
	\begin{align*}
		q\big(\exp(x_n)\exp(y_n)\cdots\exp(x_1)\exp(y_1)\big) \ &=\ \big(\tau_{\exp(x_n)\exp(y_n)}\circ\cdots\circ\tau_{\exp(x_1)\exp(y_1)}\big)(K) \\
		&=\ \big(\mu_{\Exp(\frac{x_n}{2})}\mu_{\Exp(-\frac{y_n}{2})} \cdots \mu_{\Exp(\frac{x_1}{2})}\mu_{\Exp(-\frac{y_1}{2})}\big)(K).
		\qedhere
	\end{align*}
\end{proof}
\begin{proposition}\label{prop:TrotterForSymSpace}
	Let $(M,b)$ be a pointed symmetric space. For all $x,y,z\in\Lts(M,b)$, we have the formulas
	$$\Exp_{(M,b)}(x+y) \ =\ \lim_{k\rightarrow\infty}\big(\mu_{\Exp_{(M,b)}(\frac{x}{2k})} \mu_{\Exp_{(M,b)}(-\frac{y}{2k})}\big)^k(b)$$
	and
	$$\Exp_{(M,b)}([x,y,z]) \ =\ \lim_{k\rightarrow\infty}\,\lim_{l\rightarrow\infty} \big( g_{(k,l)}\mu_{\Exp_{(M,b)}(\frac{z}{2k})}h_{(k,l)}\mu_{\Exp_{(M,b)}(\frac{z}{2k})}\big)^{k^2}(b)$$
	with
	 $$g_{(k,l)}:=  \big(\mu_{\Exp_{(M,b)}(\frac{x}{2l\sqrt{k}})}\mu_{\Exp_{(M,b)}(-\frac{y}{2l\sqrt{k}})}\mu_{\Exp_{(M,b)}(-\frac{x}{2l\sqrt{k}})}\mu_{\Exp_{(M,b)}(\frac{y}{2l\sqrt{k}})}\big)^{l^2}$$
	and
	$$h_{(k,l)}:=  \big(\mu_{\Exp_{(M,b)}(\frac{x}{2l\sqrt{k}})}\mu_{\Exp_{(M,b)}(\frac{y}{2l\sqrt{k}})}\mu_{\Exp_{(M,b)}(-\frac{x}{2l\sqrt{k}})}\mu_{\Exp_{(M,b)}(-\frac{y}{2l\sqrt{k}})}\big)^{l^2}.$$
\end{proposition}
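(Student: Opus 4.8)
The plan is to reduce both identities, via the homogeneity of connected symmetric spaces, to the classical Trotter product formula~(\ref{eqn:TrotterProductFormula}) and commutator formula~(\ref{eqn:commutatorFormula}) in a Banach--Lie group, with Lemma~\ref{lem:translatorForTrotter} as the dictionary between products of symmetries on a quotient $G/K$ and products of exponentials in $G$. First I would reduce to the case that $M$ is connected: every point occurring in either formula lies in the basic connected component $M_0$ of $(M,b)$, which --- since $\Exp_{(M,b)}$ is a local diffeomorphism at $0$ and a connected symmetric space is generated by any subset with non-empty interior --- coincides with the connected component of $b$ in $M$ and is thus an open connected symmetric subspace; the inclusion $M_0\hookrightarrow M$ is a morphism inducing the identity on Lie triple systems, so $\Lts(M_0,b)=\Lts(M,b)$, $\Exp_{(M_0,b)}=\Exp_{(M,b)}$, and each symmetry $\mu_{\Exp_{(M,b)}(v)}$ restricts on $M_0$ to $\mu_{\Exp_{(M_0,b)}(v)}$. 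Assuming henceforth $M$ connected, the homogeneity of connected symmetric spaces (Section~\ref{sec:AutOfConnectedSymSpace}) gives a symmetric pair $(G,\sigma,K):=(\Aut(M),c_{\mu_b},\Aut(M)_b)$ with Lie algebra $\mathfrak{g}=\mathfrak{g}_+\oplus\mathfrak{g}_-$ and an isomorphism $\Phi\colon G/K\to M$, $gK\mapsto g(b)$, of pointed symmetric spaces; under the identification $\Lts(M,b)=\mathfrak{g}_-$ (via $\ev_b$), the map $\Exp_{(M,b)}$ corresponds to $\Phi\circ q\circ\exp|_{\mathfrak{g}_-}$ by~(\ref{eqn:Exp=q exp}), $\Phi$ conjugates $\mu_{\Exp_{G/K}(v)}$ to $\mu_{\Exp_{(M,b)}(v)}$, and by~(\ref{eqn:tau}) it conjugates each $\tau_g\in\Aut(\calU(G/K))$ to $g\in\Aut(M)$. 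So it suffices to prove both formulas for $G/K$ in place of $(M,b)$, where $\exp:=\exp_G$, $\Exp:=\Exp_{G/K}$ and $q$ are as in Lemma~\ref{lem:translatorForTrotter}.

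The Trotter formula then follows at once. For $x,y\in\mathfrak{g}_-$ one has $x+y\in\mathfrak{g}_-$, so applying the continuous map $q$ to~(\ref{eqn:TrotterProductFormula}) and then Lemma~\ref{lem:translatorForTrotter} with $n=k$ and all $x_i=\tfrac{x}{k}$, $y_i=\tfrac{y}{k}$ gives
$$\Exp(x+y)\ =\ q\Big(\lim_{k\to\infty}\big(\exp(\tfrac{x}{k})\exp(\tfrac{y}{k})\big)^k\Big)\ =\ \lim_{k\to\infty}\big(\mu_{\Exp(\frac{x}{2k})}\mu_{\Exp(-\frac{y}{2k})}\big)^k(K),$$
which, transported by $\Phi$, is the first claimed formula.

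For the commutator formula, put $a:=[x,y]\in\mathfrak{g}_+$, so that $[x,y,z]=[a,z]\in\mathfrak{g}_-$, and set
$$G_{k,l}:=\big(\exp(\tfrac{x}{l\sqrt k})\exp(\tfrac{y}{l\sqrt k})\exp(-\tfrac{x}{l\sqrt k})\exp(-\tfrac{y}{l\sqrt k})\big)^{l^2},\quad H_{k,l}:=\big(\exp(\tfrac{x}{l\sqrt k})\exp(-\tfrac{y}{l\sqrt k})\exp(-\tfrac{x}{l\sqrt k})\exp(\tfrac{y}{l\sqrt k})\big)^{l^2}.$$
By bilinearity of the bracket and~(\ref{eqn:commutatorFormula}), $\lim_{l\to\infty}G_{k,l}=\exp([\tfrac{x}{\sqrt k},\tfrac{y}{\sqrt k}])=\exp(\tfrac{a}{k})$ and $\lim_{l\to\infty}H_{k,l}=\exp([\tfrac{x}{\sqrt k},-\tfrac{y}{\sqrt k}])=\exp(-\tfrac{a}{k})$ in $G$. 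The proof of Lemma~\ref{lem:translatorForTrotter} in fact establishes that $\tau_{\exp(x_n)\exp(y_n)\cdots\exp(x_1)\exp(y_1)}=\mu_{\Exp(\frac{x_n}{2})}\mu_{\Exp(-\frac{y_n}{2})}\cdots\mu_{\Exp(\frac{x_1}{2})}\mu_{\Exp(-\frac{y_1}{2})}$ for $x_i,y_i\in\mathfrak{g}_-$, and that $\tau_{\exp(w)}=\mu_{\Exp(\frac{w}{2})}\circ\mu_K$ for $w\in\mathfrak{g}_-$; grouping $G_{k,l}$ and $H_{k,l}$ into consecutive pairs of factors, the first identity yields $\tau_{G_{k,l}}=g_{(k,l)}$ and $\tau_{H_{k,l}}=h_{(k,l)}$ (transported by $\Phi$), and the second yields $\mu_{\Exp(\frac{z}{2k})}=\tau_{\exp(z/k)}\circ\mu_K$. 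Using $\mu_K\circ\mu_K=\id$, $\mu_K\circ\tau_g\circ\mu_K=\tau_{\sigma(g)}$ (immediate from $\mu_K(hK)=\sigma(h)K$) and $\sigma(\exp(z/k))=\exp(-z/k)$, one pushes both base-point symmetries to the right and collapses the composite to a single translation:
$$g_{(k,l)}\circ\mu_{\Exp(\frac{z}{2k})}\circ h_{(k,l)}\circ\mu_{\Exp(\frac{z}{2k})}\ =\ \tau_{Q_{k,l}},\qquad Q_{k,l}:=G_{k,l}\,\exp(\tfrac{z}{k})\,\sigma(H_{k,l})\,\exp(-\tfrac{z}{k})\in G.$$
Letting $l\to\infty$ first: since $\sigma$ is continuous and fixes $\mathfrak{g}_+$ pointwise, $\sigma(H_{k,l})\to\exp(-\tfrac{a}{k})$, so $Q_{k,l}\to Q_k:=\exp(\tfrac{a}{k})\exp(\tfrac{z}{k})\exp(-\tfrac{a}{k})\exp(-\tfrac{z}{k})$ in $G$; by continuity of $g\mapsto g^{k^2}$, of $q$ and of $\Phi$, the inner limit of $\big(g_{(k,l)}\mu_{\Exp(\frac{z}{2k})}h_{(k,l)}\mu_{\Exp(\frac{z}{2k})}\big)^{k^2}(b)$ equals $\Phi(q(Q_k^{k^2}))$. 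Finally, (\ref{eqn:commutatorFormula}) applied to $a$ and $z$ gives $Q_k^{k^2}\to\exp([a,z])=\exp([x,y,z])$ as $k\to\infty$, whence the iterated limit is $\Phi(q(\exp([x,y,z])))=\Exp_{(M,b)}([x,y,z])$.

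I expect the main obstacle to be this collapsing step in the commutator case. Because $[x,y]$ lies in $\mathfrak{g}_+$ rather than $\mathfrak{g}_-$, Lemma~\ref{lem:translatorForTrotter} cannot be applied to the outer commutator directly; one must first approximate $\exp(\pm[x,y]/k)$ by the double-limit expressions $G_{k,l},H_{k,l}$ and then carefully transport every occurrence of the base-point symmetry $\mu_K$ past the translations $\tau_g$ --- which is exactly where the twist by $\sigma$ enters --- so as to recognise $g_{(k,l)}\circ\mu_{\Exp(\frac{z}{2k})}\circ h_{(k,l)}\circ\mu_{\Exp(\frac{z}{2k})}$ as an honest translation $\tau_{Q_{k,l}}$ coming from $G$. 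This identification is indispensable because the composite is raised to the $k^2$-th power: tracking only its value at $b$ would not suffice. The remaining work --- verifying the pairing used to apply Lemma~\ref{lem:translatorForTrotter} and keeping all rescalings ($\tfrac12$, $\sqrt k$, $l$) and signs consistent --- is purely computational.
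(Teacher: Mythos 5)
Your proof is correct and follows essentially the same route as the paper's: reduce to the connected case, identify $M$ with $\Aut(M)/\Aut(M)_b$, and translate the group-level Trotter and commutator formulas through Lemma~\ref{lem:translatorForTrotter}. Your explicit identification of the composite as a single translation $\tau_{Q_{k,l}}$ via $\mu_K\circ\tau_g\circ\mu_K=\tau_{\sigma(g)}$ is simply a more carefully bookkept version of the paper's terser ``applying again Lemma~\ref{lem:translatorForTrotter}'', and it correctly resolves the sign pattern that makes $h_{(k,l)}$ correspond to an approximation of $\exp(-[x,y]/k)$.
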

\begin{proof}
	W.l.o.g., we assume $M$ to be connected, so that we can identify it with the homogeneous space $\Aut(M)/\Aut(M)_b$ (in the following abbreviated by $G/G_b$). Applying Lemma~\ref{lem:translatorForTrotter} to the quotient map $q\colon G\rightarrow G/G_b$, we obtain
	\begin{eqnarray*}
		\Exp_{G/G_b}(x+y) &=& q(\exp_{G}(x+y)) \ \stackrel{\mbox{\scriptsize (\ref{eqn:TrotterProductFormula})}}{=}\ \lim_{k\rightarrow\infty} q\Big(\big(\exp_{G}\big(\frac{x}{k}\big)\exp_{G}\big(\frac{y}{k}\big)\big)^k\Big)\\
		 &=& \lim_{k\rightarrow\infty}\big(\mu_{\Exp_{G/G_b}(\frac{x}{2k})}\mu_{\Exp_{G/G_b}(-\frac{y}{2k})}\big)^k(G_b).
	\end{eqnarray*}
	Further, we have	
	\begin{eqnarray*}
		\Exp_{G/G_b}([x,y,z]) &=& q\big(\exp_{G}\big([[x,y],z]\big)\big) \\ &\stackrel{\mbox{\scriptsize (\ref{eqn:commutatorFormula})}}{=}& \lim_{k\rightarrow\infty} q\Big(\big(\exp_{G}\big(\frac{[x,y]}{k}\big)\exp_{G}\big(\frac{z}{k}\big)\exp_{G}\big(-\frac{[x,y]}{k}\big)\exp_{G}\big(-\frac{z}{k}\big)\big)^{k^2}\Big)
	\end{eqnarray*}
	with
	$$\exp_G\big(\frac{[x,y]}{k}\big) \ \stackrel{\mbox{\scriptsize (\ref{eqn:commutatorFormula})}}{=}\  \lim_{l\rightarrow\infty}\Big(\exp_{G}\big(\frac{x}{l\sqrt{k}}\big)\exp_{G}\big(\frac{y}{l\sqrt{k}}\big)\exp_{G}\big(-\frac{x}{l\sqrt{k}}\big)\exp_{G}\big(-\frac{y}{l\sqrt{k}}\big)\Big)^{l^2}$$
	and
	$$\exp_G\big(-\frac{[x,y]}{k}\big) \ \stackrel{\mbox{\scriptsize (\ref{eqn:commutatorFormula})}}{=}\  \lim_{l\rightarrow\infty}\Big(\exp_{G}\big(-\frac{x}{l\sqrt{k}}\big)\exp_{G}\big(\frac{y}{l\sqrt{k}}\big)\exp_{G}\big(\frac{x}{l\sqrt{k}}\big)\exp_{G}\big(-\frac{y}{l\sqrt{k}}\big)\Big)^{l^2},$$
	so that applying again Lemma~\ref{lem:translatorForTrotter} leads to the assertion.
\end{proof}
\begin{proposition} \label{prop:LtsOfClosedReflectionSubspace}
	Let $(M,b)$ be a pointed symmetric space and $(N,b)\leq (M,b)$ be a closed reflection subspace. Then
	$\mathfrak{n}:=\{x\in\Lts(M,b)\colon \Exp_{(M,b)}(\RR x)\subseteq N\}$
	is a closed triple subsystem of $\Lts(M,b)$.
\end{proposition}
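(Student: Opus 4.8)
The plan is to prove three things about $\mathfrak{n}$: that it is closed in $\Lts(M,b)$, that it is a linear subspace, and that it is stable under the triple bracket. Closedness is immediate from the continuity of $\Exp_{(M,b)}$: if $(x_j)$ is a sequence in $\mathfrak{n}$ with $x_j\to x$, then for each fixed $t\in\RR$ we have $\Exp_{(M,b)}(tx)=\lim_j\Exp_{(M,b)}(tx_j)\in N$ because $N$ is closed and each $\Exp_{(M,b)}(tx_j)$ lies in $N$; hence $x\in\mathfrak{n}$. Closure under scalar multiplication is equally trivial, since $\Exp_{(M,b)}(\RR(tx))\subseteq\Exp_{(M,b)}(\RR x)\subseteq N$ whenever $x\in\mathfrak{n}$. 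The substance is therefore to show that $\mathfrak{n}$ is closed under addition and under the triple bracket, and for that I would invoke the two limit formulas of Proposition~\ref{prop:TrotterForSymSpace}.

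The key elementary observation is that, since $N$ is a reflection subspace containing $b$, every symmetry $\mu_p$ with $p\in N$ maps $N$ into itself (as $\mu_p(N)=\mu(\{p\}\times N)\subseteq\mu(N\times N)\subseteq N$); consequently so does every finite composition of such symmetries, and applying any such composition to $b$ yields a point of $N$. For additivity, I would take $x,y\in\mathfrak{n}$ and fix $t\in\RR$; then $tx,ty\in\mathfrak{n}$, so all the points $\Exp_{(M,b)}(\pm\tfrac{tx}{2k})$ and $\Exp_{(M,b)}(\pm\tfrac{ty}{2k})$ lie in $N$. Applying the first formula of Proposition~\ref{prop:TrotterForSymSpace} with $x,y$ replaced by $tx,ty$, each approximant $\big(\mu_{\Exp_{(M,b)}(\frac{tx}{2k})}\mu_{\Exp_{(M,b)}(-\frac{ty}{2k})}\big)^k(b)$ lies in $N$, and since $N$ is closed the limit $\Exp_{(M,b)}(t(x+y))$ lies in $N$ as well. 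As $t$ was arbitrary, $x+y\in\mathfrak{n}$, and combined with scalar invariance this makes $\mathfrak{n}$ a linear subspace.

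For the triple bracket I would argue the same way with the second formula: given $x,y,z\in\mathfrak{n}$ and $t\in\RR$, trilinearity gives $t[x,y,z]=[tx,y,z]$ with $tx\in\mathfrak{n}$, so every factor occurring in $g_{(k,l)}$, in $h_{(k,l)}$, and in $\mu_{\Exp_{(M,b)}(z/2k)}$ (after replacing $x$ by $tx$) is a symmetry at a point of $N$; hence each approximant $\big(g_{(k,l)}\mu_{\Exp_{(M,b)}(\frac{z}{2k})}h_{(k,l)}\mu_{\Exp_{(M,b)}(\frac{z}{2k})}\big)^{k^2}(b)$ lies in $N$, and passing first to the inner limit in $l$ and then to the outer limit in $k$, using closedness of $N$ each time, yields $\Exp_{(M,b)}(t[x,y,z])=\Exp_{(M,b)}([tx,y,z])\in N$. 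Thus $[x,y,z]\in\mathfrak{n}$, and $\mathfrak{n}$ is a closed triple subsystem. I do not expect a genuine obstacle here once Proposition~\ref{prop:TrotterForSymSpace} is in hand; the only point needing slight care is that one must obtain membership of the entire one-parameter subspace rather than merely of the value at $1$, which is precisely why one rescales by an arbitrary $t$ before applying the formulas, exploiting the already-established scalar invariance of $\mathfrak{n}$.
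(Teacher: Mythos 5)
Your proposal is correct and follows essentially the same route as the paper: closedness of $\mathfrak{n}$ from continuity of $\Exp_{(M,b)}$, trivial scalar invariance, and closure under addition and the triple bracket via the two limit formulas of Proposition~\ref{prop:TrotterForSymSpace} together with the fact that the approximants are compositions of symmetries at points of $N$ applied to $b$, so that closedness of $N$ yields membership of the limits. The only (immaterial) difference is the rescaling trick: you use $t[x,y,z]=[tx,y,z]$, whereas the paper uses $[tx,ty,tz]=t^3[x,y,z]$; both give the full one-parameter subspace.
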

\begin{definition}\label{def:LtsOfClosedSubreflectionSpace}
	We call $\mathfrak{n}\leq\Lts(M,b)$ the \emph{Lie triple system of the closed reflection subspace $(N,b)\leq (M,b)$}.
\end{definition}
\begin{proof}
	Cf.\ \cite[p.~126]{Loo69} for the finite-dimensional case. By the continuity of the exponential map, it is clear that $\mathfrak{n}$ is a closed subset of $\Lts(M,b)$. Further, it its trivial that $\RR\mathfrak{n}\subseteq\mathfrak{n}$. Given any $x,y,z\in\mathfrak{n}$, Proposition~\ref{prop:TrotterForSymSpace} shows that $\Exp_{(M,b)}(tx+ty)$ and $\Exp_{(M,b)}([tx,ty,tz])$ lie in $N$ for all $t\in\RR$, since $N$ is a closed reflection subspace of $M$. Thus, we have $x+y\in\mathfrak{n}$ and $[x,y,z]\in\mathfrak{n}$, entailing that $\mathfrak{n}$ is a triple subsystem of $\Lts(M,b)$.
\end{proof}
\begin{lemma}\label{lem:subsetN_1=f^-1(N_2)}
	Let $f\colon (M_1,b_1)\rightarrow (M_2,b_2)$ be a morphism of pointed symmetric spaces and $N_2\subseteq M_2$ a subset. We consider the sets $N_1:=f^{-1}(N_2)$ and 
	$$\mathfrak{n}_i:=\{x\in\Lts(M_i,b_i)\colon \Exp_{(M_i,b_i)}(\RR x)\subseteq N_i\}$$
	for $i=1,2$. Then we have $\mathfrak{n}_1=\Lts(f)^{-1}(\mathfrak{n}_2)$.
\end{lemma}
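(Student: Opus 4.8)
The plan is to verify the claimed set equality by a single chain of equivalences, using only the naturality relation $f\circ \Exp_{(M_1,b_1)} = \Exp_{(M_2,b_2)}\circ \Lts(f)$ recorded in Section~\ref{sec:LtsAndSymSpace} and the fact that $\Lts(f)$ is a continuous linear map.

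Fix $x\in\Lts(M_1,b_1)$. First I would unfold the defining condition $x\in\mathfrak{n}_1$, i.e.\ $\Exp_{(M_1,b_1)}(\RR x)\subseteq N_1$. Since $N_1=f^{-1}(N_2)$, this holds if and only if $f\big(\Exp_{(M_1,b_1)}(tx)\big)\in N_2$ for every $t\in\RR$. Next I would apply the intertwining identity to each such point, obtaining $f\big(\Exp_{(M_1,b_1)}(tx)\big) = \Exp_{(M_2,b_2)}\big(\Lts(f)(tx)\big) = \Exp_{(M_2,b_2)}\big(t\,\Lts(f)(x)\big)$, where the last step uses the linearity of $\Lts(f)$. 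Hence $x\in\mathfrak{n}_1$ if and only if $\Exp_{(M_2,b_2)}\big(t\,\Lts(f)(x)\big)\in N_2$ for all $t\in\RR$, which is exactly $\Exp_{(M_2,b_2)}\big(\RR\,\Lts(f)(x)\big)\subseteq N_2$, i.e.\ $\Lts(f)(x)\in\mathfrak{n}_2$, i.e.\ $x\in\Lts(f)^{-1}(\mathfrak{n}_2)$. Reading the chain in both directions gives $\mathfrak{n}_1=\Lts(f)^{-1}(\mathfrak{n}_2)$.

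Since every step is an equivalence, the two inclusions need not be treated separately, and there is no genuine obstacle here: the only ingredients are the naturality of $\Exp$ under morphisms of pointed symmetric spaces and the elementary observation that the line $\RR x$ is carried by the linear map $\Lts(f)$ onto the line $\RR\,\Lts(f)(x)$. The only point deserving a word of care is that the quantifier ``for all $t\in\RR$'' is transported uniformly across $f$, which it is precisely because $\Lts(f)$ commutes with scalar multiplication; note also that no hypothesis on $N_1$ or $N_2$ beyond being subsets is used.
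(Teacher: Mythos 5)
Your argument is correct and coincides with the paper's own proof: both unfold $x\in\mathfrak{n}_1$ via $N_1=f^{-1}(N_2)$ and then apply the intertwining relation $f\circ\Exp_{(M_1,b_1)}=\Exp_{(M_2,b_2)}\circ\Lts(f)$ together with the linearity of $\Lts(f)$ to convert the condition into $\Lts(f)(x)\in\mathfrak{n}_2$. Nothing is missing.
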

\begin{proof}
	Given any $x\in \Lts(M_1,b_1)$, we have $x\in\mathfrak{n}_1$ if and only if $f(\Exp_{(M_1,b_1)}(\RR x))\subseteq N_2$, which is equivalent to $\Exp_{(M_2,b_2)}(\RR \Lts(f)(x))\subseteq N_2$, i.e., to $\Lts(f)(x)\in \mathfrak{n}_2$. Hence, it follows that $\mathfrak{n}_1=\Lts(f)^{-1}(\mathfrak{n}_2)$.
\end{proof}
\begin{proposition}\label{prop:preimageOfClosedReflectionSubspace}
	Let $f\colon (M_1,b_1)\rightarrow (M_2,b_2)$ be a morphism of pointed symmetric spaces and $(N_2,b_2)\leq (M_2,b_2)$ a closed reflection subspace with Lie triple system $\mathfrak{n}_2\leq \Lts(M_2,b_2)$.\linebreak Then $N_1:=f^{-1}(N_2)$ is a closed reflection subspace of $(M_1,b_1)$ with Lie triple system\linebreak $\mathfrak{n}_1=\Lts(f)^{-1}(\mathfrak{n}_2)$.
\end{proposition}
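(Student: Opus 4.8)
The plan is to verify directly that $N_1 = f^{-1}(N_2)$ is a closed reflection subspace and then to read off its Lie triple system by combining Proposition~\ref{prop:LtsOfClosedReflectionSubspace} with Lemma~\ref{lem:subsetN_1=f^-1(N_2)}.

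First I would check that $N_1$ is a closed reflection subspace of $(M_1,b_1)$. Since $f$ is a morphism of symmetric spaces, it is in particular continuous, so $N_1 = f^{-1}(N_2)$ is closed because $N_2$ is closed. Moreover $f(b_1)=b_2\in N_2$ gives $b_1\in N_1$, and for $x,y\in N_1$ the homomorphism property $f(x\cdot y)=f(x)\cdot f(y)$ together with the stability of $N_2$ under multiplication yields $f(x\cdot y)\in N_2$, hence $x\cdot y\in N_1$. Thus $(N_1,b_1)\leq(M_1,b_1)$ is a closed reflection subspace, and Proposition~\ref{prop:LtsOfClosedReflectionSubspace} applies: its Lie triple system is the closed triple subsystem $\mathfrak{n}_1=\{x\in\Lts(M_1,b_1)\colon\Exp_{(M_1,b_1)}(\RR x)\subseteq N_1\}$.

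Second, I would identify $\mathfrak{n}_1$. The set $\mathfrak{n}_2$ appearing in the hypothesis is, by Definition~\ref{def:LtsOfClosedSubreflectionSpace}, exactly $\{x\in\Lts(M_2,b_2)\colon\Exp_{(M_2,b_2)}(\RR x)\subseteq N_2\}$, i.e.\ the set denoted $\mathfrak{n}_2$ in Lemma~\ref{lem:subsetN_1=f^-1(N_2)} for the subset $N_2\subseteq M_2$. Applying that lemma to $f$ and $N_2$ directly gives $\mathfrak{n}_1=\Lts(f)^{-1}(\mathfrak{n}_2)$, which is the claimed description of the Lie triple system of $(N_1,b_1)$.

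There is no genuine obstacle here: the statement is an immediate consequence of the two cited results, the only point to observe being that the ``$\mathfrak{n}_2$'' of the lemma and the ``$\mathfrak{n}_2$'' of the hypothesis coincide precisely because $N_2$ is a closed reflection subspace. One may note in passing that Lemma~\ref{lem:subsetN_1=f^-1(N_2)} already yields the equality $\mathfrak{n}_1=\Lts(f)^{-1}(\mathfrak{n}_2)$ without any closedness assumption; closedness of $N_1$ (hence the use of Proposition~\ref{prop:LtsOfClosedReflectionSubspace}) is needed only to conclude that $\mathfrak{n}_1$ is itself a closed triple subsystem of $\Lts(M_1,b_1)$ and thereby qualifies as ``the Lie triple system of the closed reflection subspace $(N_1,b_1)$'' in the sense of Definition~\ref{def:LtsOfClosedSubreflectionSpace}.
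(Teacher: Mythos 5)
Your proposal is correct and follows essentially the same route as the paper, which simply cites Definition~\ref{def:LtsOfClosedSubreflectionSpace} and Lemma~\ref{lem:subsetN_1=f^-1(N_2)}; you merely make explicit the (routine) verification that $N_1$ is a closed reflection subspace, which the paper leaves implicit.
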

\begin{proof}
	The assertion follows immediately by Definition~\ref{def:LtsOfClosedSubreflectionSpace} and Lemma~\ref{lem:subsetN_1=f^-1(N_2)}.
\end{proof}
%
%
%
%
\subsection{Integral Subspaces} \label{sec:integralSubspaces}
An \emph{integral subspace} of a symmetric space $M$ is a reflection subspace $N\leq M$ endowed with a symmetric space structure such that the inclusion map $\iota\colon N\rightarrow M$ is smooth and for each $b\in N$, the induced morphism $\Lts_b(\iota)\colon \Lts(N,b)\rightarrow \Lts(M,b)$ of Lie triple systems is a (closed) topological embedding.
In the light of (\ref{eqn:imageOfInnAut}), we know for $\Inn(N)$-transitive (e.g.\ connected) $N$ that, for each $b_1,b_2\in N$, the map $\Lts_{b_1}(\iota)$ is a topological embedding if and only if $\Lts_{b_2}(\iota)$ is one.

We shall frequently identify $\Lts(N,b)$ with its image $\mathfrak{n}\subseteq \Lts(M,b)$ under $\Lts_b(\iota)$. Thus, the exponential map $\Exp_{(N,b)}$ of $(N,b)$ is the restriction $\Exp_{(M,b)}|_\mathfrak{n}$ of the exponential map $\Exp_{(M,b)}$ of $(M,b)$.
The basic connected component $N_0$ of $N$ is given by $N_0=\langle\Exp_{(M,b)}(\mathfrak{n})\rangle$ (cf.\ Section~\ref{sec:LtsAndSymSpace}).

\begin{example} \label{ex:integralSubspaceOfHomogeneousSpace}
	Let $(G,\sigma_G,K_G)$ be a symmetric pair and $\iota\colon (H,\sigma_H)\rightarrow (G,\sigma_G)$ be an injective morphism of symmetric Lie groups such that $\iota\colon H\rightarrow G$ is an integral subgroup.
	With $K_H:=\iota^{-1}(K_G)$, the map $\iota\colon (H,\sigma_H,K_H)\rightarrow (G,\sigma_G,K_G)$ is a morphism of symmetric pairs and $\Sym(\iota)\colon H/K_H\rightarrow G/K_G$ is an injective morphism of pointed symmetric spaces (cf.\ Lemma~\ref{lem:symToInjevtive}). The induced map $\Lts(\Sym(\iota))\colon \Lts(H/K_H)\rightarrow \Lts(G/K_G)$ is given by $\calLts(L(\iota))$ (cf.\ (\ref{eqn:LtsSym=calLtsL})), i.e., by the topological embedding 
	$\mathfrak{h}_- \hookrightarrow \mathfrak{g}_-$,
	where $\mathfrak{g}$ and $\mathfrak{h}$ denote the Lie algebras of $G$ and $H$, respectively.
	
	Therefore $\Sym(\iota)$ is an integral subspace if $H/K_H$ is $\Inn(\calU(H/K_H))$-transitive, but this additional assumption is not necessary.
	Indeed, to see that, for each $h\in H$, the map $\Lts_{hK_H}(\Sym(\iota))$ is a topological embedding, it suffices to note that
	$$\Lts_{hK_H}(\Sym(\iota))\circ\Lts_{K_H}(\tau_h^H) \ =\ \Lts_{K_G}(\tau_{\iota(h)}^G)\circ\Lts_{K_H}(\Sym(\iota))$$
	(cf.\ (\ref{eqn:SymTau})), where $\tau_h^H$ and $\tau_{\iota(h)}^G$ are automorphisms of symmetric spaces.
\end{example}
\begin{lemma}\label{lem:uniquenessOfIntegralSubspaces}
	Let $(M,b)$ be a pointed symmetric space, $(N,b)\leq (M,b)$ an $\Inn(N)$-transitive reflection subspace and $\mathfrak{n}$ a closed triple subsystem of $\Lts(M,b)$ such that $\Exp_{(M,b)}(\mathfrak{n})\subseteq N$. Let $\iota\colon (N,b)\rightarrow (M,b)$ be the inclusion map. Then there is at most one integral subspace structure on $N$ which makes $\mathfrak{n}$ the Lie triple system of $N$, i.e., $\im (\Lts(\iota))=\mathfrak{n}$.
\end{lemma}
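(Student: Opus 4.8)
The plan is to show that any two integral subspace structures on $N$ with Lie triple system $\mathfrak{n}$ coincide. So let $N'$ and $N''$ be the set $N$ equipped with two such structures. Since $N$ is a reflection subspace of $M$, both structures carry the same set-theoretic multiplication $\mu|_{N\times N}$ (where $\mu$ is the multiplication of $M$), so that the inclusion $\iota\colon N\rightarrow M$ is a morphism of pointed symmetric spaces for either structure; in particular $\Lts_b(\iota)$ makes sense on each side. I would prove that the identity map $\mathrm{id}\colon N'\rightarrow N''$ is an isomorphism of symmetric spaces.

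First I would compare the two structures near the base point $b$. Identifying $\Lts(N',b)$ and $\Lts(N'',b)$ with $\mathfrak{n}\subseteq\Lts(M,b)$ via $\Lts_b(\iota)$ as in Section~\ref{sec:integralSubspaces}, we have $\iota\circ\Exp_{(N',b)}=\Exp_{(M,b)}|_{\mathfrak{n}}=\iota\circ\Exp_{(N'',b)}$. Since the exponential maps of $N'$ and $N''$ are local diffeomorphisms at $0$, there are open neighborhoods $V',V''\subseteq\mathfrak{n}$ of $0$ such that $\Exp_{(N',b)}|_{V'}$ and $\Exp_{(N'',b)}|_{V''}$ are diffeomorphisms onto open neighborhoods of $b$ in $N'$ resp.\ $N''$. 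Putting $W:=V'\cap V''$ and $O:=\Exp_{(M,b)}(W)$, one checks that $O$ is open in $N'$ and in $N''$, and that the two normal charts $(\Exp_{(N',b)}|_{W})^{-1}$ and $(\Exp_{(N'',b)}|_{W})^{-1}$ agree on $O$ (here one uses that $\Exp_{(M,b)}|_{V'}=\iota\circ\Exp_{(N',b)}|_{V'}$ is injective). Hence the identity maps $O\rightarrow N''$ and $O\rightarrow N'$ are smooth, i.e., $\mathrm{id}\colon N'\rightarrow N''$ is a diffeomorphism on a neighborhood of $b$.

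Then I would propagate this to all of $N$ using the $\Inn(N)$-transitivity. Given $y\in N$, choose $g\in\Inn(N)$ with $g(b)=y$; writing $g=\mu_{x_1}\cdots\mu_{x_m}$ with $x_i\in N$ and using that the multiplications of $N'$ and $N''$ are both $\mu|_{N\times N}$, the map $g$ is simultaneously an automorphism of the symmetric space $N'$ and of $N''$, hence a diffeomorphism of each; denote these by $g_{N'}$ resp.\ $g_{N''}$. From the identity of set maps $\mathrm{id}_{N'\rightarrow N''}=g_{N''}\circ\mathrm{id}_{N'\rightarrow N''}\circ(g_{N'})^{-1}$, together with $(g_{N'})^{-1}(y)=b$, $g_{N''}(b)=y$, and the smoothness of $\mathrm{id}\colon N'\rightarrow N''$ near $b$, it follows that $\mathrm{id}\colon N'\rightarrow N''$ is smooth near $y$. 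As $y$ was arbitrary, $\mathrm{id}\colon N'\rightarrow N''$ is smooth, and by symmetry so is $\mathrm{id}\colon N''\rightarrow N'$. Therefore the two structures coincide.

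I expect the only genuinely delicate point to be this last propagation step: local agreement of the two structures at $b$ does not by itself force global agreement, and it is precisely the $\Inn(N)$-transitivity hypothesis that upgrades it (note that the closedness of $\mathfrak{n}$ and the inclusion $\Exp_{(M,b)}(\mathfrak{n})\subseteq N$ are automatic once some integral subspace structure with Lie triple system $\mathfrak{n}$ exists). The comparison near $b$ is routine bookkeeping once one tracks which normal charts are in play.
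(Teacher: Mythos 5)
Your proof is correct and follows essentially the same route as the paper: the paper's proof consists of observing that $\id_N$ intertwines the two exponential maps (both being $\Exp_{(M,b)}|_{\mathfrak{n}}$) and then invoking the general fact from Section~\ref{sec:LtsAndSymSpace} that a homomorphism of pointed reflection spaces satisfying $f\circ\Exp_{(M_1,b_1)}=\Exp_{(M_2,b_2)}\circ A$ is automatically smooth when the source is $\Inn$-transitive. Your two steps (agreement of normal charts at $b$, then propagation via (\ref{eqn:imageOfInnAut})) are precisely the unpacked proof of that cited fact.
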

\begin{proof}
	Let $\calA_1$ and $\calA_2$ be smooth atlases on $N$, each of them making it an integral subspace with Lie triple system $\mathfrak{n}$. Since we have $\id_N\circ \Exp_{(M,b)}|_{\mathfrak{n}} = \Exp_{(M,b)}|_{\mathfrak{n}}\circ \id_{\mathfrak{n}}$, the map $\id_N\colon (N,\calA_1)\rightarrow (N,\calA_2)$ is smooth (cf.\ Section~\ref{sec:LtsAndSymSpace}). The inverse map $\id_N\colon (N,\calA_2)\rightarrow (N,\calA_1)$ being smooth, too, the map $\id_N$ is an isomorphism of integral subspaces.
\end{proof}
\begin{remark} \label{rem:uniquenessOfIntegralSubspaceH/K_H}
	In Example~\ref{ex:integralSubspaceOfHomogeneousSpace}, the integral subspace $H/K_H$ of $G/K_G$ is not assumed to be $\Inn(\calU(H/K_H))$-transitive, but, nevertheless, there is no other integral subspace structure on (the reflection subspace) $H/K_H$ with Lie triple system $\mathfrak{h}_-\hookrightarrow \mathfrak{g}_-$ if we require, in addition, that the natural action $\tau^H\colon H \times H/K_H \rightarrow H/K_H$ (cf.\ (\ref{eqn:tau})) is smooth.
	(Actually, it suffices to require that it induces smooth maps $\tau^H_h$ for all $h\in H$.)
	Indeed, with $(M,b):=G/K_G$ and $(N,b):=H/K_H$, the equation in the proof of Lemma~\ref{lem:uniquenessOfIntegralSubspaces} still shows that $\id_N$ is at least locally smooth around $b$. Since we have $\id_N\circ \tau_h^{H} = \tau_{h}^{H}\circ \id_N$ for all $h\in H$, the map $\id_N$ is smooth around every point in $N$ and hence smooth.
\end{remark}
Before giving a proposition about integrating a closed triple subsystem of the Lie triple system of a pointed symmetric space to an integral subspace, we consider the following lemmas:
\begin{lemma}\label{lem:integralSubspaceOfSymLieGroup}
	Let $(G,\sigma)$ be a symmetric Lie group and let $\mathfrak{h}\leq L(G)$ be a $L(\sigma)$-invariant closed subalgebra of the Lie algebra $L(G)$. Then the connected integral subgroup $H:=\langle\exp(\mathfrak{h})\rangle$ of $G$ with Lie algebra $\mathfrak{h}$ is $\sigma$-invariant.
	Further, $(H,\sigma_H)$ with $\sigma_H:=\sigma|_H^H$ is a symmetric Lie group with symmetric Lie algebra $(\mathfrak{h},L(\sigma)|_\mathfrak{h}^\mathfrak{h})$.
\end{lemma}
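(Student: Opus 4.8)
The plan is to first settle the $\sigma$-invariance of $H$ by a purely group-theoretic argument, and then to upgrade $\sigma|_H^H$ to a smooth involutive automorphism of the Banach--Lie group $H$ by working in exponential coordinates of $H$.

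Since $H=\langle\exp_G(\mathfrak{h})\rangle$ and $\sigma$ is an automorphism of $G$ with $\sigma\circ\exp_G=\exp_G\circ L(\sigma)$, and since $L(\sigma)$ is involutive with $L(\sigma)(\mathfrak{h})\subseteq\mathfrak{h}$ (hence $L(\sigma)(\mathfrak{h})=\mathfrak{h}$), I would conclude $\sigma(H)=\langle\exp_G(L(\sigma)(\mathfrak{h}))\rangle=\langle\exp_G(\mathfrak{h})\rangle=H$. Thus $\sigma_H:=\sigma|_H^H$ is a well-defined automorphism of the abstract group $H$, and it is involutive because $\sigma$ is. As $\mathfrak{h}$ is a closed subalgebra of the Banach--Lie algebra $L(G)$, it is itself a Banach--Lie algebra, and $\theta_H:=L(\sigma)|_\mathfrak{h}^\mathfrak{h}$ is a continuous involutive automorphism of $\mathfrak{h}$ (its own inverse), so $(\mathfrak{h},\theta_H)$ is a symmetric Lie algebra.

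Next I would pass to the intrinsic structure of $H$. Identifying $L(H)$ with $\mathfrak{h}$ via $L(\iota)$, naturality of the exponential map gives $\iota\circ\exp_H=\exp_G|_\mathfrak{h}$, and therefore
\[\iota\circ(\sigma_H\circ\exp_H)=\sigma\circ\iota\circ\exp_H=\sigma\circ\exp_G|_\mathfrak{h}=\exp_G\circ L(\sigma)|_\mathfrak{h}=\iota\circ(\exp_H\circ\theta_H).\]
Injectivity of $\iota$ then yields $\sigma_H\circ\exp_H=\exp_H\circ\theta_H$. Since $\exp_H$ is a local diffeomorphism at $0$ and $\theta_H$ is continuous, there is an open $0$-neighborhood $W\subseteq\mathfrak{h}$ such that $\exp_H|_W$ is a diffeomorphism onto an open subset of $H$ and $\theta_H(W)$ lies in the domain of a local inverse of $\exp_H$; on $\exp_H(W)$ we then have $\sigma_H=\exp_H\circ\theta_H\circ(\exp_H|_W)^{-1}$, so $\sigma_H$ is smooth near the identity. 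As $\sigma_H$ is a homomorphism and $H$ is connected, the identity $\sigma_H\circ\lambda_g=\lambda_{\sigma_H(g)}\circ\sigma_H$ (with $\lambda_g$ the left translation by $g$) propagates smoothness to all of $H$. Hence $\sigma_H$ is a smooth involutive automorphism of $H$, i.e.\ $(H,\sigma_H)$ is a symmetric Lie group; and differentiating $\sigma_H\circ\exp_H=\exp_H\circ\theta_H$ at $0$ gives $L(\sigma_H)=\theta_H=L(\sigma)|_\mathfrak{h}^\mathfrak{h}$, so its symmetric Lie algebra is $(\mathfrak{h},L(\sigma)|_\mathfrak{h}^\mathfrak{h})$.

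I do not expect a serious obstacle. The only delicate point is that smoothness of $\iota\circ\sigma_H=\sigma\circ\iota$ \emph{into $G$} does not by itself entail smoothness of $\sigma_H$ as a self-map of $H$, because the integral-subgroup topology on $H$ may be strictly finer than the subspace topology; this is precisely why the argument is routed through the exponential chart of $H$ and the relation $\sigma_H\circ\exp_H=\exp_H\circ\theta_H$ rather than through $\iota$ directly.
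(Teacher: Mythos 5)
Your proof is correct and follows essentially the same route as the paper: the paper likewise establishes $\sigma$-invariance by the standard generating-set argument and deduces smoothness of $\sigma_H$ from the identity $\sigma_H\circ\exp|_{\mathfrak h}=\exp|_{\mathfrak h}\circ L(\sigma)|_{\mathfrak h}^{\mathfrak h}$ near the identity, extending globally via the homomorphism property. Your extra care about the integral-subgroup topology being possibly finer than the subspace topology is exactly the point the paper's terse phrasing glosses over, and your handling of it is sound.
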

\begin{proof}
	The $\sigma$-invariance of $H$ can be checked by a standard argument. From $\exp|_{\mathfrak{h}}\circ L(\sigma)|_{\mathfrak{h}}^{\mathfrak{h}} = \sigma_{H} \circ \exp|_{\mathfrak{h}}$, we deduce that the map $\sigma_H$ is smooth on an open identity neighborhood and hence globally smooth being a group homomorphism. It follows that $L(\sigma_H)=L(\sigma)|_\mathfrak{h}^\mathfrak{h}$.
\end{proof}
\begin{lemma}\label{lem:[n,n]+n}
	Let $(\mathfrak{g},\theta)$ be a symmetric Lie algebra and let $\mathfrak{n}$ be a closed triple subsystem\linebreak of $\mathfrak{g}_-$. Then the closed subspace $\mathfrak{h}:=\overline{[\mathfrak{n},\mathfrak{n}]}\oplus \mathfrak{n}\leq \mathfrak{g}_+\oplus\mathfrak{g}_-$ of $\mathfrak{g}$ is a $\theta$-invariant subalgebra\linebreak of $\mathfrak{g}$, hence a symmetric Lie algebra with $\mathfrak{h}_+=\overline{[\mathfrak{n},\mathfrak{n}]}$ and $\mathfrak{h}_-=\mathfrak{n}$.
\end{lemma}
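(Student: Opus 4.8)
The plan is to separate the purely algebraic core of the statement from the topological bookkeeping. Recall first that, since $\theta$ is a continuous involution, the decomposition $\mathfrak{g}=\mathfrak{g}_+\oplus\mathfrak{g}_-$ is a topological direct sum and a $\ZZ/2$-grading, so that $[\mathfrak{g}_+,\mathfrak{g}_+]\subseteq\mathfrak{g}_+$, $[\mathfrak{g}_+,\mathfrak{g}_-]\subseteq\mathfrak{g}_-$ and $[\mathfrak{g}_-,\mathfrak{g}_-]\subseteq\mathfrak{g}_+$. First I would show that the (not necessarily closed) subspace $\mathfrak{h}_0:=[\mathfrak{n},\mathfrak{n}]\oplus\mathfrak{n}$, where $[\mathfrak{n},\mathfrak{n}]$ denotes the linear span of the brackets $[x,y]$ with $x,y\in\mathfrak{n}$, is a Lie subalgebra of $\mathfrak{g}$.

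For this I would check the three relevant types of brackets separately. That $[\mathfrak{n},\mathfrak{n}]$ is carried into $[\mathfrak{n},\mathfrak{n}]$ trivially holds; that $[[\mathfrak{n},\mathfrak{n}],\mathfrak{n}]\subseteq\mathfrak{n}$ is precisely the triple subsystem hypothesis on $\mathfrak{n}$ (checked on spanning elements and extended by bilinearity); and for the remaining case I would invoke the Jacobi identity: for $x,y,u,v\in\mathfrak{n}$,
\[
  [[x,y],[u,v]] \ =\ [[[x,y],u],v]+[u,[[x,y],v]],
\]
where both summands lie in $[\mathfrak{n},\mathfrak{n}]$ since $[[x,y],u]$ and $[[x,y],v]$ lie in $\mathfrak{n}$ by the triple subsystem property. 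Hence $[[\mathfrak{n},\mathfrak{n}],[\mathfrak{n},\mathfrak{n}]]\subseteq[\mathfrak{n},\mathfrak{n}]$. This Jacobi step is the only place where anything beyond formal manipulation enters, so I expect it to be the main obstacle; everything else is routine.

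Next I would pass to closures. Since $\mathfrak{n}$ is already closed and the sum $\mathfrak{g}_+\oplus\mathfrak{g}_-$ is topological, one gets $\overline{\mathfrak{h}_0}=\overline{[\mathfrak{n},\mathfrak{n}]}\oplus\mathfrak{n}=\mathfrak{h}$, and the closure of a subalgebra is again a subalgebra by the joint continuity of the bracket; thus $\mathfrak{h}$ is a closed subalgebra of $\mathfrak{g}$. Finally, $\theta$-invariance is immediate, since $\theta$ acts as $\id$ on $\overline{[\mathfrak{n},\mathfrak{n}]}\subseteq\mathfrak{g}_+$ and as $-\id$ on $\mathfrak{n}\subseteq\mathfrak{g}_-$, so that $\theta(\mathfrak{h})=\mathfrak{h}$. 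Therefore $(\mathfrak{h},\theta|_{\mathfrak{h}}^{\mathfrak{h}})$ is a symmetric Lie algebra, and its $(\pm 1)$-eigenspaces are $\mathfrak{h}\cap\mathfrak{g}_\pm$, that is, $\mathfrak{h}_+=\overline{[\mathfrak{n},\mathfrak{n}]}$ and $\mathfrak{h}_-=\mathfrak{n}$, which is the assertion.
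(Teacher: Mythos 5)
Your proof is correct and follows exactly the standard grading-plus-Jacobi argument that the paper merely cites (Loos p.~122, Helgason Ch.~IV \S 7, ``carries over to the Banach case''), with the Banach-specific closure step (continuity of the projections $\tfrac{1}{2}(\id\pm\theta)$ and of the bracket) supplied correctly. Nothing is missing.
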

\begin{proof}
	Cf.\ \cite[p.~122]{Loo69} or \cite[Ch.~IV, \S~7]{Hel01} for the finite-dimensional case. The proof carries over to the Banach case.
\end{proof}
\begin{proposition} \label{prop:integralSubreflectionSpace}
	Let $(M,b)$ be a pointed symmetric space and $\mathfrak{n}$ a closed triple subsystem of $\Lts(M,b)$. Then $N:=\langle \Exp_{(M,b)}(\mathfrak{n})\rangle \leq M$ can be uniquely made an integral subspace of $M$ with $\Lts(N,b)=\mathfrak{n}$. Note that $N$ is connected.
\end{proposition}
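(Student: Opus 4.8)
The strategy is to reduce to the group setting via the homogeneity of connected symmetric spaces. Without loss of generality assume $M$ is connected, so that $(M,b)\cong \Aut(M)/\Aut(M)_b$ as pointed symmetric spaces; write $G:=\Aut(M)$, $K:=\Aut(M)_b$, $\sigma:=c_{\mu_b}$, and let $(\mathfrak{g},\theta)=(\Der(M),(\mu_b)_\ast)$ be the associated symmetric Lie algebra, so that $\mathfrak{g}_-\cong\Lts(M,b)$ via $\ev_b$. Under this identification $\mathfrak{n}$ becomes a closed triple subsystem of $\mathfrak{g}_-$. By Lemma~\ref{lem:[n,n]+n}, the subspace $\mathfrak{h}:=\overline{[\mathfrak{n},\mathfrak{n}]}\oplus\mathfrak{n}$ is a $\theta$-invariant closed subalgebra of $\mathfrak{g}$ with $\mathfrak{h}_+=\overline{[\mathfrak{n},\mathfrak{n}]}$ and $\mathfrak{h}_-=\mathfrak{n}$. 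By Lemma~\ref{lem:integralSubspaceOfSymLieGroup}, the connected integral subgroup $H:=\langle\exp_G(\mathfrak{h})\rangle$ is $\sigma$-invariant and $(H,\sigma_H)$ with $\sigma_H:=\sigma|_H^H$ is a symmetric Lie group with symmetric Lie algebra $(\mathfrak{h},\theta|_\mathfrak{h}^\mathfrak{h})$.

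Now form the symmetric pair: with $K_H:=H\cap K=\iota^{-1}(K)$ (where $\iota\colon H\hookrightarrow G$ is the inclusion, an injective morphism of symmetric Lie groups), Lemma~\ref{lem:symToInjevtive} gives that $(H,\sigma_H,K_H)$ is a symmetric pair and that $\Sym(\iota)\colon H/K_H\rightarrow G/K$ is an injective morphism of pointed symmetric spaces. By Example~\ref{ex:integralSubspaceOfHomogeneousSpace} its tangent map at the base point is $\calLts(L(\iota))$, i.e.\ the topological embedding $\mathfrak{h}_-=\mathfrak{n}\hookrightarrow\mathfrak{g}_-$, and the argument given there (using (\ref{eqn:SymTau}) and the automorphisms $\tau_h^H$) shows $\Lts_{hK_H}(\Sym(\iota))$ is a topological embedding for every $h\in H$ without any transitivity hypothesis. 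Hence $\Sym(\iota)$ realizes $H/K_H$ as an integral subspace of $M$ whose Lie triple system is $\mathfrak{n}$. It remains to identify the \emph{image} $\Sym(\iota)(H/K_H)$ with $N=\langle\Exp_{(M,b)}(\mathfrak{n})\rangle$. Using (\ref{eqn:Exp=q exp}) one has $\Exp_{(M,b)}(\mathfrak{n})=q(\exp_G(\mathfrak{n}))\subseteq q(H)=\Sym(\iota)(H/K_H)$, and since the image of $\Sym(\iota)$ is a reflection subspace containing $\Exp_{(M,b)}(\mathfrak{n})$ we get $N\subseteq\Sym(\iota)(H/K_H)$; conversely $H$ is generated by $\exp_G(\mathfrak{h})$ and $\exp_G(\mathfrak{h}_+)\subseteq \langle\exp_G(\mathfrak{h}_-)\exp_G(\mathfrak{h}_-)\rangle$ (by $\mathfrak{h}_+=\overline{[\mathfrak{n},\mathfrak{n}]}$ together with the commutator formula (\ref{eqn:commutatorFormula})), so that $q(H)\subseteq\langle q(\exp_G(\mathfrak{n}))\rangle=N$; thus equality holds and $N$ carries an integral subspace structure with $\Lts(N,b)=\mathfrak{n}$, and it is connected because $H$ is.

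For uniqueness: $N$ is connected, hence $\Inn(N)$-transitive, so Lemma~\ref{lem:uniquenessOfIntegralSubspaces} applies directly (the hypothesis $\Exp_{(M,b)}(\mathfrak{n})\subseteq N$ holds by construction) and gives that the integral subspace structure on $N$ with Lie triple system $\mathfrak{n}$ is unique.

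The main obstacle I anticipate is the image identification $\Sym(\iota)(H/K_H)=N$, specifically the inclusion $q(H)\subseteq N$: one must show that $\exp_G(\mathfrak{h}_+)$-directions do not enlarge the orbit beyond $\langle\Exp_{(M,b)}(\mathfrak{n})\rangle$. This is where the replacement of $\mathfrak{h}_+$ by $\overline{[\mathfrak{n},\mathfrak{n}]}$ is essential, and the cleanest route is to pass entirely to the symmetric-space side and invoke the commutator formula of Proposition~\ref{prop:TrotterForSymSpace}: it expresses $\Exp_{(M,b)}([x,y,z])$ as a limit of words in the symmetries $\mu_{\Exp_{(M,b)}(w)}$ with $w\in\mathfrak{n}$, hence as a limit of elements of $N$; since $N$ is closed would make this immediate, but $N$ need not be closed here, so instead one argues at the level of $G$ that $\exp_G([\mathfrak{n},\mathfrak{n}])\cdot K\subseteq N$ using that the translations $\tau_{\exp_G(x)\exp_G(y)}=\mu_{\Exp(x/2)}\mu_{\Exp(-y/2)}$ (from the proof of Lemma~\ref{lem:translatorForTrotter}) map $b=K$ into $N$, and that $N$ is stable under the automorphisms $\mu_p$ for $p\in N$; a short closure/genericity check then upgrades this to all of $q(H)$.
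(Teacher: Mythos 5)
Your construction is exactly the paper's: reduce to $M\cong G/G_b$ with $G=\Aut(M)$, form $\mathfrak{h}=\overline{[\mathfrak{n},\mathfrak{n}]}\oplus\mathfrak{n}$ via Lemma~\ref{lem:[n,n]+n}, pass to the $\sigma$-invariant connected integral subgroup $H=\langle\exp_G(\mathfrak{h})\rangle$ and the symmetric pair $(H,\sigma_H,K_H)$ with $K_H=H\cap G_b$, and obtain the integral subspace $\Sym(\iota)\colon H/K_H\to G/G_b$ with Lie triple system $\mathfrak{n}$ via Example~\ref{ex:integralSubspaceOfHomogeneousSpace}; the uniqueness step via Lemma~\ref{lem:uniquenessOfIntegralSubspaces} is also the paper's.

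The one place where you diverge is the identification $\Sym(\iota)(H/K_H)=N$, and there your argument has a genuine gap. The inclusion $\exp_G(\mathfrak{h}_+)\subseteq\langle\exp_G(\mathfrak{h}_-)\exp_G(\mathfrak{h}_-)\rangle$ does not follow from the commutator formula~(\ref{eqn:commutatorFormula}): that formula only places $\exp_G([\mathfrak{n},\mathfrak{n}])$ in the \emph{closure} of the subgroup generated by $\exp_G(\mathfrak{n})$, and since neither that subgroup nor $N$ is assumed closed, the limits may escape. You notice this yourself, but the proposed repair (``a short closure/genericity check then upgrades this to all of $q(H)$'') is not an argument; moreover, a general element of $H$ is a word in $\exp_G(\mathfrak{h})$, not in $\exp_G(\mathfrak{h}_+)\cup\exp_G(\mathfrak{h}_-)$ alone, so even granting the claimed inclusion you would need a further Trotter-type (hence again limit-based) reduction. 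The paper closes this step with a single observation from Section~\ref{sec:LtsAndSymSpace}: $H/K_H$ is a \emph{connected} symmetric space and is therefore generated as a reflection space by the image of its exponential map, i.e.\ $H/K_H=\langle\Exp_{H/K_H}(\mathfrak{n})\rangle$; since $\Sym(\iota)$ is an injective homomorphism of reflection spaces, its image is $\langle\Exp_{(M,b)}(\mathfrak{n})\rangle=N$. The point you missed is that reflection-space generation is far more powerful than group generation by one-parameter subgroups -- a connected symmetric space is generated by any subset with nonempty interior -- so no closure argument and no separate control of the $\mathfrak{h}_+$-directions is needed.
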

\begin{proof}
	Since $N$ is a reflection subspace of the basic connected component $M_0$, we can, w.l.o.g., assume $M$ to be connected and identify it with the homogeneous space $\Aut(M)/\Aut(M)_b$, in the following abbreviated by $G/G_b$. We further put $\sigma:=c_{\mu_b}$ and $(\mathfrak{g},L(\sigma)):=L(G,\sigma)$.\linebreak
	Considering the $\sigma$-invariant connected integral subgroup $H:=\langle\exp(\mathfrak{h})\rangle$ of $G$ with $L(\sigma)$-invariant Lie algebra $\mathfrak{h}:=\overline{[\mathfrak{n},\mathfrak{n}]}\oplus \mathfrak{n}\leq \mathfrak{g}_+\oplus\mathfrak{g}_-$ (cf.\ Lemma~\ref{lem:[n,n]+n} and Lemma~\ref{lem:integralSubspaceOfSymLieGroup}), we obtain a morphism $\iota\colon (H,\sigma_H,H_b)\rightarrow (G,\sigma,G_b)$ of symmetric pairs with $\sigma_H:=\sigma|_H^H$ and $H_b:=G_b\cap H$ (cf.\ Lemma~\ref{lem:integralSubspaceOfSymLieGroup} and Example~\ref{ex:integralSubspaceOfHomogeneousSpace}).
	It induces a connected integral subspace\linebreak $\Sym(\iota)\colon H/H_b\rightarrow G/G_b$  with $\Lts(H/H_b)=\mathfrak{h}_-=\mathfrak{n}$ by Example~\ref{ex:integralSubspaceOfHomogeneousSpace}.
	From $H/H_b=\langle\Exp_{H/H_b}(\mathfrak{n})\rangle$, we obtain $H/H_b = \langle\Exp_{G/G_b}(\mathfrak{n})\rangle$ when considering $H/H_b$ as a subset of $G/G_b$.
	The uniqueness assertion follows by Lem\-ma~\ref{lem:uniquenessOfIntegralSubspaces}, since $N$ has to be connected (and hence $\Inn(N)$-transitive) because of $N_0=\langle \Exp_{(M,b)}(\mathfrak{n})\rangle$.	
\end{proof}
\begin{corollary}\label{cor:1-to-1CorrespondenceIntegralSubspaces}
	The connected integral subspaces of a pointed symmetric space $(M,b)$ are in one-to-one correspondence with the closed triple subsystem of $\Lts(M,b)$.
\end{corollary}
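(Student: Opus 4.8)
The plan is to exhibit the claimed bijection explicitly and to check that its two candidate maps are mutually inverse, drawing only on Proposition~\ref{prop:integralSubreflectionSpace}, Lemma~\ref{lem:uniquenessOfIntegralSubspaces}, and the identity $N_0=\langle\Exp_{(M,b)}(\mathfrak n)\rangle$ recorded in Section~\ref{sec:integralSubspaces}.

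First I would define a map $\Phi$ from connected integral subspaces to closed triple subsystems: to a connected integral subspace $(N,b)\le (M,b)$ with inclusion $\iota\colon N\hookrightarrow M$, assign $\mathfrak n:=\im\big(\Lts_b(\iota)\big)$. By the very definition of an integral subspace, $\Lts_b(\iota)$ is a closed topological embedding of Banach spaces, so $\mathfrak n$ is a closed subspace of $\Lts(M,b)$; and since $\Lts_b(\iota)$ is a morphism of Lie triple systems, $\mathfrak n$ is stable under the triple bracket, i.e.\ it is a closed triple subsystem. Conversely, define $\Psi$ by sending a closed triple subsystem $\mathfrak n\le\Lts(M,b)$ to $N:=\langle\Exp_{(M,b)}(\mathfrak n)\rangle$ equipped with the unique integral subspace structure furnished by Proposition~\ref{prop:integralSubreflectionSpace}; by that proposition this is a connected integral subspace with $\Lts(N,b)=\mathfrak n$.

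Next I would verify $\Phi\circ\Psi=\id$, which is exactly the statement $\Lts(N,b)=\mathfrak n$ in Proposition~\ref{prop:integralSubreflectionSpace}. For $\Psi\circ\Phi=\id$, start from a connected integral subspace $(N,b)$ with Lie triple system $\mathfrak n=\im(\Lts_b(\iota))$. On the level of underlying sets, connectedness gives $N=N_0=\langle\Exp_{(M,b)}(\mathfrak n)\rangle$, the underlying set of $\Psi(\mathfrak n)$. On the level of structures, $N$ is $\Inn(N)$-transitive (being connected), $\mathfrak n$ is a closed triple subsystem, and $\Exp_{(M,b)}(\mathfrak n)=\Exp_{(N,b)}(\mathfrak n)\subseteq N$; hence Lemma~\ref{lem:uniquenessOfIntegralSubspaces} shows that the integral subspace structure on $N$ making $\mathfrak n$ its Lie triple system is unique, so it coincides with the structure carried by $\Psi(\mathfrak n)$. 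Thus $\Psi(\Phi(N,b))=(N,b)$, and $\Phi$, $\Psi$ are mutually inverse bijections.

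There is no genuinely hard step here; the proof is pure assembly of the preceding results. The only points requiring care are bookkeeping ones: one must remember that the underlying set of a connected integral subspace is recovered from its Lie triple system through $N=N_0=\langle\Exp_{(M,b)}(\mathfrak n)\rangle$, and that the phrase ``one-to-one correspondence'' is to be read modulo the identification of two integral subspace structures on one and the same subset whenever the identity map is an isomorphism between them --- which is precisely the content of Lemma~\ref{lem:uniquenessOfIntegralSubspaces}.
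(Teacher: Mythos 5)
Your proof is correct and follows exactly the route the paper intends: the corollary is stated without proof as an immediate consequence of Proposition~\ref{prop:integralSubreflectionSpace} (existence and uniqueness of the connected integral subspace with prescribed closed triple subsystem), Lemma~\ref{lem:uniquenessOfIntegralSubspaces}, and the identity $N=N_0=\langle\Exp_{(M,b)}(\mathfrak n)\rangle$ for connected $N$. Your write-up simply makes the two mutually inverse assignments explicit, which is exactly the intended argument.
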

\begin{proposition} \label{prop:LtsOfIntegralSubpace}
	Let $(M,b)$ be a pointed symmetric space and $(N,b)$ a connected integral subspace. If its Lie triple system $\mathfrak{n}\leq \Lts(M,b)$ is separable, i.e., if it contains a countable dense subset, then we have
	$\mathfrak{n}=\{x\in \Lts(M,b)\colon \Exp_{(M,b)}(\RR x)\subseteq N\}$.
\end{proposition}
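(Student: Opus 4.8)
One inclusion is free: since the exponential map of $(N,b)$ is the restriction $\Exp_{(M,b)}|_{\mathfrak{n}}$ (cf.\ Section~\ref{sec:integralSubspaces}), every $x\in\mathfrak{n}$ satisfies $\Exp_{(M,b)}(\RR x)=\Exp_{(N,b)}(\RR x)\subseteq N$. For the converse the plan is to reduce to the analogous statement for integral subgroups of Banach--Lie groups with separable Lie algebra (recorded in Section~\ref{sec:terminologySub}; cf.\ \cite[Th.~IV.4.14]{Nee06}) by means of the homogeneity of connected symmetric spaces.

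Concretely, using that $N$ is connected I would first assume $M$ connected and identify it with $G/G_b$ for $G:=\Aut(M)$, $\sigma:=c_{\mu_b}$ and $G_b:=\Aut(M)_b$, so that $\mathfrak{g}_-=\Lts(M,b)$ for $\mathfrak{g}=\mathfrak{g}_+\oplus\mathfrak{g}_-=L(G,\sigma)$. By Proposition~\ref{prop:integralSubreflectionSpace} together with its proof and Corollary~\ref{cor:1-to-1CorrespondenceIntegralSubspaces}, the integral subspace $N$ is $\Sym(H,\sigma_H,H_b)=H/H_b$, where $H=\langle\exp_G(\mathfrak{h})\rangle\leq G$ is the connected integral subgroup with Lie algebra $\mathfrak{h}:=\overline{[\mathfrak{n},\mathfrak{n}]}\oplus\mathfrak{n}$, $\sigma_H:=\sigma|_H^H$ and $H_b:=G_b\cap H$. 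A short observation is that $\mathfrak{h}$ is separable whenever $\mathfrak{n}$ is: the image of the continuous bilinear bracket on the separable space $\mathfrak{n}\times\mathfrak{n}$ is separable, hence so is its closed linear span $\overline{[\mathfrak{n},\mathfrak{n}]}$, and a direct sum of separable Banach spaces is separable. Now, given $x\in\mathfrak{g}_-$ with $\Exp_{(M,b)}(\RR x)\subseteq N$, I would, for each $t\in\RR$, use transitivity of the natural $H$-action on $N=H/H_b$ to pick $h(t)\in H$ with $h(t)G_b=\exp_G(tx)G_b=\Exp_{(M,b)}(tx)$, and set $k(t):=h(t)^{-1}\exp_G(tx)\in G_b$.

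The heart of the argument, and the step I expect to be the real obstacle, is to promote the weak conclusion $\exp_G(tx)\in H\cdot G_b$ to the genuine containment $\exp_G(\RR x)\subseteq H$ --- only the latter lets us invoke the group-theoretic result. This is achieved using the involution: since $G_b\subseteq G^\sigma$ we have $\sigma(k(t))=k(t)$, while $\sigma(\exp_G(tx))=\exp_G(-tx)$ because $L(\sigma)|_{\mathfrak{g}_-}=-\id$, and $\sigma(h(t))=\sigma_H(h(t))\in H$ by Lemma~\ref{lem:integralSubspaceOfSymLieGroup}; comparing the two resulting expressions for $\sigma(k(t))$ yields $\exp_G(2tx)=h(t)\,\sigma_H(h(t))^{-1}\in H$. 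Hence $\exp_G(\RR x)\subseteq H$, and since $\mathfrak{h}$ is separable, \cite[Th.~IV.4.14]{Nee06} gives $x\in L(\iota)(L(H))=\mathfrak{h}$; as $\mathfrak{h}$ is $L(\sigma)$-invariant and $x\in\mathfrak{g}_-$, we conclude $x\in\mathfrak{h}\cap\mathfrak{g}_-=\mathfrak{h}_-=\mathfrak{n}$, which completes the proof. Everything apart from this involution manoeuvre (the separability bookkeeping, the identification $N=H/H_b$, and passing from $\mathfrak{h}$ back to $\mathfrak{n}$) is routine.
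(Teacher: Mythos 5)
Your proof is correct and follows essentially the same route as the paper's: reduce to the connected case, realize $N$ as $H/H_b$ for the integral subgroup $H$ with separable Lie algebra $\mathfrak{h}=\overline{[\mathfrak{n},\mathfrak{n}]}\oplus\mathfrak{n}$, and use the involution to upgrade $\exp_G(tx)\in HG_b$ to $\exp_G(2tx)=h\,\sigma(h)^{-1}\in H$ before invoking \cite[Th.~IV.4.14]{Nee06}. The only difference is that you spell out the (routine) separability of $\mathfrak{h}$, which the paper takes for granted.
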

\begin{proof}
	W.l.o.g., we assume $M$ to be connected. Having the situation of Proposition~\ref{prop:integralSubreflectionSpace}, we refer, in the following, to the definitions and notation made in its proof.
	Since $\mathfrak{n}$ is separable, the Banach--Lie algebra $\mathfrak{h}=\overline{[\mathfrak{n},\mathfrak{n}]}\oplus \mathfrak{n}$ is also separable, so that
	$\mathfrak{h}=\{x\in \mathfrak{g}\colon \exp(\RR x)\subseteq H\}$
	(cf.\ Section~\ref{sec:terminologySub}).
	
	Given any $x\in \Lts(G/G_b)=\mathfrak{g}_-$ with $\Exp_{G/G_b}(\RR x)\subseteq \Sym(\iota)(H/H_b)$, we shall show that $x\in\mathfrak{n}=\mathfrak{h}_-$. For any $t\in\RR$, we have
	$$\Exp_{G/G_b}(tx) \ =\ \exp(tx)G_b \ \in\ \Sym(\iota)(H/H_b),$$
	i.e., there exist some $h\in H$ and some $k\in G_b$ satisfying $\exp(tx)=hk$.
	Since we have $\sigma(\exp(tx))=\exp(L(\sigma)(tx))=\exp(-tx)$, we obtain
	$$\exp(tx) \ =\ \big(\sigma(\exp(tx))\big)^{-1} \ =\ \sigma(k)^{-1}\sigma(h)^{-1} \ =\ k^{-1}\sigma(h)^{-1},$$
	so that
	$$\exp(2tx) \ =\ \exp(tx)\exp(tx) \ =\ hkk^{-1}\sigma(h)^{-1} \ =\ h\sigma(h)^{-1} \in H.$$
	Therefore, we obtain $\exp(\RR x)\subseteq H$, i.e., $x\in \mathfrak{h}$. Hence it follows that $x\in\mathfrak{g}_-\cap\mathfrak{h}=\mathfrak{h}_-$.
\end{proof}
%
%
%
%
%
\subsection{Symmetric Subspaces} \label{sec:symmetricSubspaces}
A \emph{symmetric subspace} of a symmetric space $M$ is a reflection subspace $N\leq M$ that is a submanifold of $M$. It is clear that such $N$ itself is a symmetric space. If, in addition, $N$ is a split submanifold, then we say that $N$ is a \emph{split symmetric subspace}.
Let $\iota\colon N\rightarrow M$ be the inclusion map. Then, for each $b\in N$, the induced morphism $\Lts_b(\iota)$ is a topological embedding, $N$ being a submanifold. Therefore, a symmetric subspace is an integral subspace and we shall frequently identify $\Lts(N,b)$ with its image $\mathfrak{n}$ in $\Lts(M,b)$ under $\Lts_b(\iota)$.
In the light of (\ref{eqn:imageOfInnAut}), we know for $\Inn(N)$-transitive (e.g.\ connected) $N$ that $N$ splits if and only if $\mathfrak{n}$ splits in $\Lts(M,b)$ as a Banach space.
Every open reflection subspace of a symmetric space is a (split) symmetric subspace. In particular, the connected components of a symmetric space are symmetric subspaces.

\begin{problem}\label{problem:closedness}
	Being a submanifold, a symmetric subspace $N\leq M$ is locally closed. There arises the question whether - or under which conditions - it is moreover closed in $M$.
\end{problem}
\begin{proposition} \label{prop:openSubsymSpace}
	Let $M$ be a symmetric space. Each open symmetric subspace $N\leq M$ is also closed in $M$. Hence, if in addition, $M$ is connected, then $N$ is empty or coincides\linebreak with $M$.
\end{proposition}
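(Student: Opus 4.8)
The plan is to show that an open symmetric subspace $N\leq M$ is closed by exhibiting a family of automorphisms of $M$ that moves an arbitrary boundary point into $N$, thereby forcing that boundary point to lie in $N$ (since $N$ is stable under all symmetries $\mu_x$, $x\in N$). More precisely, let $p\in\overline{N}$. Because $N$ is open, we may choose a point $q\in N$ close to $p$; concretely, take a normal chart of $(M,q)$ centered at $q$, so that a neighborhood of $q$ is identified with a $0$-neighborhood in $\Lts(M,q)$, and pick $q$ so close to $p$ that $p$ lies in the image of this chart. Write $p=\Exp_{(M,q)}(v)$ for some $v\in\Lts(M,q)$ small, and consider the one-parameter subspace $\alpha_v$ with $\alpha_v(0)=q$, $\alpha_v(1)=p$. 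The translations $\tau_{\alpha_v,s}=\mu_{\alpha_v(s/2)}\circ\mu_{\alpha_v(0)}$ along $\alpha_v$ are automorphisms of $M$ satisfying $\tau_{\alpha_v,s}(\alpha_v(t))=\alpha_v(t+s)$; in particular $\tau_{\alpha_v,1}(q)=p$.

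The key point is that $\tau_{\alpha_v,1}$ is built out of the symmetries $\mu_{\alpha_v(1/2)}$ and $\mu_q$. Now $\alpha_v(1/2)=\Exp_{(M,q)}(v/2)$, and by choosing $q$ sufficiently close to $p$ (hence $v$ sufficiently small), we can arrange that $\alpha_v(1/2)\in N$ as well, since $N$ is open and contains $q$ and $\alpha_v$ is continuous. Thus both $\mu_q$ and $\mu_{\alpha_v(1/2)}$ map $N$ into $N$ (as $N$ is a reflection subspace and these are left multiplications by points of $N$), so $\tau_{\alpha_v,1}=\mu_{\alpha_v(1/2)}\circ\mu_q$ maps $N$ into $N$. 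Since $\mu_q$ and $\mu_{\alpha_v(1/2)}$ are involutions, $\tau_{\alpha_v,1}$ is in fact an automorphism of $M$ restricting to an automorphism of $N$; in particular $\tau_{\alpha_v,1}^{-1}(N)\subseteq N$. But $q\in N$ and $\tau_{\alpha_v,1}(q)=p$, so $p\in\tau_{\alpha_v,1}(N)\subseteq N$. Hence $\overline{N}\subseteq N$, i.e.\ $N$ is closed. If $M$ is connected, then $N$ being both open and closed and (implicitly, as a nonempty symmetric subspace) nonempty forces $N=M$; the empty set is of course the remaining trivial case.

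The main obstacle is the verification that, for $p\in\overline{N}$, one can genuinely pick an auxiliary point $q\in N$ together with a one-parameter subspace of $(M,q)$ whose value at parameter $1$ equals $p$ and whose value at parameter $1/2$ also lies in $N$ — i.e.\ controlling that the ``midpoint'' $\Exp_{(M,q)}(v/2)$ stays inside the open set $N$. This is a matter of choosing $q$ close enough to $p$: fix a normal chart $\Exp_{(M,q_0)}$ at some $q_0\in N$ with open domain $W\subseteq\Lts(M,q_0)$ small enough that $\Exp_{(M,q_0)}(W)\subseteq N$ (possible since $N$ is open and $\Exp$ is a local diffeomorphism at $0$), shrink so that $\Exp_{(M,q_0)}(\tfrac12 W)\subseteq N$ too (using continuity and that $\tfrac12 W$ is again a $0$-neighborhood contained in $W$), and then note that $p\in\overline{N}$ means $p\in\Exp_{(M,q_0)}(W)$ provided $q_0$ was taken in the appropriate neighborhood of $p$ inside $N$ — but one should instead argue directly: given $p\in\overline N$, every neighborhood of $p$ meets $N$, so pick $q\in N$ in an exponential-chart neighborhood of $p$, so that $p=\Exp_{(M,q)}(v)$ with $v$ as small as we like, and then $\Exp_{(M,q)}(tv)\in N$ for all $t\in[0,1]$ by taking the chart small enough, in particular for $t=\tfrac12$. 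Everything else is a formal manipulation of the translation identities $\tau_{\alpha,s}=\mu_{\alpha(s/2)}\circ\mu_{\alpha(0)}$ and the reflection-subspace property, both recalled in Section~\ref{sec:LtsAndSymSpace}.
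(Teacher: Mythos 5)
There is a genuine gap at the step you yourself flag as the main obstacle: you never actually establish that the midpoint $\alpha_v(\tfrac12)=\Exp_{(M,q)}(\tfrac{v}{2})$ lies in $N$. Your justification --- ``$\Exp_{(M,q)}(tv)\in N$ for all $t\in[0,1]$ by taking the chart small enough'' --- is circular at $t=1$ (that is exactly the assertion $p\in N$ you are trying to prove) and unsupported for $t\in(0,1)$: as $q$ is chosen closer to $p$, the whole segment $t\mapsto\Exp_{(M,q)}(tv)$, and in particular its midpoint, converges to $p$, which is only known to be a closure point of $N$. Openness of $N$ gives you a neighborhood of $q$ inside $N$, but the midpoint sits near $p$, not near $q$, so nothing forces it into that neighborhood. (The earlier variant with $\Exp_{(M,q_0)}(W)\subseteq N$ and ``$p\in\Exp_{(M,q_0)}(W)$'' has the same defect: if both held, you would already have $p\in N$ with no translation argument needed.) There is also a minor uniformity issue in writing $p=\Exp_{(M,q)}(v)$ for varying $q$, since the size of the normal neighborhood of $q$ is not controlled as $q\to p$; this is easily repaired by taking a normal chart at $p$ instead, writing $q=\Exp_{(M,p)}(w)\in N$ and reparametrizing that geodesic so that it starts at $q$ and ends at $p$.

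The gap is repairable, and the repair is precisely the identity $\alpha(1)=(\tau_{\alpha,\frac1n})^n(\alpha(0))$ recorded in Section~\ref{sec:LtsAndSymSpace}: instead of $\tau_{\alpha,1}=\mu_{\alpha(1/2)}\circ\mu_{\alpha(0)}$, use $(\tau_{\alpha,\frac1n})^n$ with $\tau_{\alpha,\frac1n}=\mu_{\alpha(\frac1{2n})}\circ\mu_{\alpha(0)}$. The two base points needed, $\alpha(0)=q$ and $\alpha(\frac1{2n})$, both lie in $N$ for $n$ large, since $\alpha(\frac1{2n})\to\alpha(0)=q$ and $N$ is an open neighborhood of $q$; hence $(\tau_{\alpha,\frac1n})^n$ preserves $N$ and maps $q$ to $p$, giving $p\in N$. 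This is in effect what the paper does, though packaged differently: it uses the same identity to note that a connected symmetric space is generated by any subset with non-empty interior, so for connected $M$ one gets $M=\langle N\rangle=N$ at once because $N$ is a reflection subspace, and the disconnected case is reduced to this by intersecting $N$ with the connected components of $M$; closedness then falls out because $N$ is a union of components.
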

\begin{proof}
	If $M$ is connected and $N$ is not empty, then $M$ is generated by $N$ (cf.\ Section~\ref{sec:LtsAndSymSpace}), so that we have $M=\langle N\rangle = N$. Otherwise, we write $M$ as the union $\cup_{i\in I} C_i$ of its connected components. Each non-empty intersection $N\cap C_i$ is an open symmetric subspace of $C_i$ and hence coincides with $C_i$. Denoting by $I_N\subseteq I$ the subset of all indices $i\in I$ for which $N\cap C_i \neq \emptyset$, we obtain
	$N = \cup_{i\in I_N} C_i.$
	As each connected component of $M$ is open, the complement of $N$ is so, being a union of connected components.
\end{proof}

\begin{proposition} \label{prop:LtsOfSubsymspace}
	Let $(M,b)$ be a pointed symmetric space and $(N,b)$ be a symmetric subspace with inclusion map $\iota\colon (N,b)\rightarrow (M,b)$. The image of $\Lts(N,b)$ under $\Lts(\iota)$ is given by
	$\mathfrak{n}:= \{x\in\Lts(M,b)\colon \Exp_{(M,b)}(\RR x)\subseteq N\}$.
\end{proposition}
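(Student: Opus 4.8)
Write $\mathfrak{m}:=\Lts(\iota)(\Lts(N,b))\subseteq\Lts(M,b)$ for the subspace in question, so that the claim reads $\mathfrak{m}=\mathfrak{n}$. The plan is to prove the two inclusions separately. The inclusion $\mathfrak{m}\subseteq\mathfrak{n}$ is immediate and does not use that $N$ is a submanifold: identifying $\Lts(N,b)$ with $\mathfrak{m}$, the exponential map $\Exp_{(N,b)}$ is the restriction $\Exp_{(M,b)}|_{\mathfrak{m}}$ (cf.\ Section~\ref{sec:integralSubspaces}), so $\Exp_{(M,b)}(\mathfrak{m})\subseteq N$ and hence $\Exp_{(M,b)}(\RR x)\subseteq N$ for every $x\in\mathfrak{m}$. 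The content of the statement lies in the reverse inclusion.

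For $\mathfrak{n}\subseteq\mathfrak{m}$ the key step is to transport a submanifold chart of $M$ at $b$ through the exponential map and to identify its model subspace with $\mathfrak{m}$. Being a submanifold, $N$ admits a chart $\psi\colon U\rightarrow V\subseteq E$ of $M$ with $\psi(b)=0$ and $\psi(U\cap N)=V\cap F$ for a closed subspace $F$ of $E$; composing $\psi$ with a topological isomorphism we may assume $E=\Lts(M,b)$. Put $g:=\psi\circ\Exp_{(M,b)}$, a diffeomorphism between $0$-neighbourhoods in $\Lts(M,b)$ with $dg_0=T_b\psi=:A$, since $T_0\Exp_{(M,b)}=\id$. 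Because $\psi|_{U\cap N}$ is a chart of the submanifold $N$ at $b$, its tangent map $T_b\psi\circ T_b\iota\colon T_bN\rightarrow F$ is a topological isomorphism; as $T_b\iota$ has image $\mathfrak{m}$, the topological automorphism $A$ of $\Lts(M,b)$ carries $\mathfrak{m}$ onto $F$. Hence $A|_{\mathfrak{m}}\colon\mathfrak{m}\rightarrow F$ is a continuous linear bijection of Banach spaces, so an isomorphism by the open mapping theorem. Since moreover $\Exp_{(M,b)}(\mathfrak{m})\subseteq N$ forces $g$ to map sufficiently small $0$-neighbourhoods of $\mathfrak{m}$ into $F$, with derivative $A|_{\mathfrak{m}}$ at $0$, the inverse function theorem provides a $0$-neighbourhood $W_1\subseteq\mathfrak{m}$ mapped by $g$ diffeomorphically onto an open subset of $F$, say $g(W_1)=\Omega\cap F$ with $\Omega$ open in $\Lts(M,b)$. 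Now shrink to a $0$-neighbourhood $W\subseteq\Lts(M,b)$ with $W_1\subseteq W$, on which $\Exp_{(M,b)}|_W$ and $g|_W$ are injective, with $\Exp_{(M,b)}(W)\subseteq U$ and $g(W)\subseteq\Omega$. Then for $v\in W$ one has $\Exp_{(M,b)}(v)\in N$ if and only if $v\in\mathfrak{m}$: the direction ``$\Leftarrow$'' is the easy inclusion, while for ``$\Rightarrow$'' the relation $\Exp_{(M,b)}(v)\in U\cap N$ gives $g(v)=\psi(\Exp_{(M,b)}(v))\in V\cap F$, hence $g(v)\in g(W)\cap F\subseteq\Omega\cap F=g(W_1)$, whence $v\in W_1\subseteq\mathfrak{m}$ by injectivity of $g|_W$. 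In particular $\Exp_{(M,b)}^{-1}(N)\cap W=\mathfrak{m}\cap W$.

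To finish, take $x\in\Lts(M,b)$ with $\Exp_{(M,b)}(\RR x)\subseteq N$. For sufficiently small $t\in\RR$ we have $tx\in W$ and $\Exp_{(M,b)}(tx)\in N$, so $tx\in\mathfrak{m}\cap W$ by the previous step; as $\mathfrak{m}$ is a linear subspace, $x=\frac{1}{t}(tx)\in\mathfrak{m}$. Combined with the first paragraph this yields $\mathfrak{m}=\{x\in\Lts(M,b)\colon\Exp_{(M,b)}(\RR x)\subseteq N\}$. I expect the main obstacle to be the neighbourhood bookkeeping in the second paragraph, in particular the open-mapping argument guaranteeing that $A$ carries $\mathfrak{m}$ exactly onto the model subspace $F$, so that $g^{-1}(F)$ coincides with $\mathfrak{m}$ locally; everything else is formal. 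Incidentally, the local identity $\Exp_{(M,b)}^{-1}(N)\cap W=\mathfrak{m}\cap W$ obtained on the way is precisely the ``natural chart of $N$ at $b$ as a restriction of an exponential chart'' announced in the introduction, and, together with Lemma~\ref{lem:subsetN_1=f^-1(N_2)} and the intertwining of exponential maps, it also underlies the assertion that preimages of symmetric subspaces under morphisms are symmetric subspaces.
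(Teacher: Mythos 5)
Your proof is correct, but it takes a genuinely different route from the paper's. The paper's argument for the nontrivial inclusion $\mathfrak{n}\subseteq\im(\Lts(\iota))$ is a two-line corestriction argument: for $x\in\mathfrak{n}$ the one-parameter subspace $\alpha(t)=\Exp_{(M,b)}(tx)$ has image in the submanifold $N$, hence corestricts to a smooth curve $\alpha_N$ in $N$ (smoothness being automatic because the charts of $N$ are restrictions of charts of $M$), and then $x=\alpha'(0)=\Lts(\iota)(\alpha_N'(0))\in\im(\Lts(\iota))$. You instead work directly with a submanifold chart, push it through $\Exp_{(M,b)}$, and establish the local normal form $\Exp_{(M,b)}^{-1}(N)\cap W=\mathfrak{m}\cap W$ before specializing to rays $\RR x$. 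This is more labor (and your phrase ``shrink to a $0$-neighbourhood $W$ with $W_1\subseteq W$'' needs the slightly more careful order of choices $W:=W_0\cap g^{-1}(\Omega)$ after intersecting $W_1$ with $W_0$, so that injectivity, $g(W)\subseteq\Omega$ and $W_1\subseteq W$ hold simultaneously — but this does work out), yet it buys something: the local identity you prove is essentially Proposition~\ref{prop:expChartOfSubsymSpace}, which the paper only establishes afterwards using the present proposition's identification of $\Lts(N,b)$ with $\mathfrak{n}$. So your argument amounts to a legitimate reordering in which the exponential-chart description of $N$ is derived first from the raw submanifold chart and the proposition then falls out by homogeneity of the ray $\RR x$; the paper's order yields the present proposition more cheaply via the corestriction property of submanifolds.
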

\begin{proof}
	The inclusion $\im(\Lts(\iota))\subseteq \mathfrak{n}$ is clear, since we have
	$$\Exp_{(M,b)}(\RR\Lts(\iota)(x)) \ =\ \iota(\Exp_{(N,b)}(\RR x)) \ \subseteq \ N$$
	for all $x\in \Lts(N,b)$. We shall prove the converse inclusion: Given any $x\in\mathfrak{n}$, the one-parameter subspace $\alpha\colon\RR\rightarrow M$ with $\alpha(t)=\Exp_{(M,b)}(tx)$ actually lies in $N$, so that we obtain a one-parameter subspace $\alpha_N:=\iota^{-1}\circ\alpha\colon \RR \rightarrow N$. Note that this map is indeed smooth, $N$ being a submanifold. We obtain
	$$x \ =\ \alpha^\prime(0) \ =\ (\iota\circ\alpha_N)^\prime(0) \ =\ \Lts(\iota)(\alpha_N^\prime(0)) \ \in\ \im(\Lts(\iota)),$$
	so that $\mathfrak{n}\subseteq \im(\Lts(\iota))$.
\end{proof}
\begin{proposition} \label{prop:expChartOfSubsymSpace}
	Let $(M,b)$ be a pointed symmetric space and $(N,b)$ be a symmetric subspace with Lie triple system $\mathfrak{n}\leq \Lts(M,b)$. Then there exists an open $0$-neighborhood\linebreak $V\subseteq\Lts(M,b)$ such that $\Exp_{(M,b)}|_V$ is a diffeomorphism onto an open subset of $M$ and 
	$$\Exp_{(M,b)}(V\cap \mathfrak{n}) \ =\ \Exp_{(M,b)}(V) \cap N.$$
\end{proposition}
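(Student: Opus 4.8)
The plan is to manufacture the desired neighborhood $V$ by combining an exponential chart of $(M,b)$ with an exponential chart of the symmetric subspace $(N,b)$, using two facts from the setup: first, that $(N,b)$ being an integral subspace forces $\Exp_{(N,b)}=\Exp_{(M,b)}|_{\mathfrak{n}}$, and second, that $(N,b)$ being a \emph{submanifold} means its manifold topology is the one induced from $M$. I will also use Proposition~\ref{prop:LtsOfSubsymspace} only in the weak form $\Exp_{(M,b)}(\mathfrak{n})\subseteq N$.

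Concretely, I would first choose an open $0$-neighborhood $V_1\subseteq\Lts(M,b)$ on which $\Exp:=\Exp_{(M,b)}$ restricts to a diffeomorphism onto an open neighborhood $U_1$ of $b$ in $M$; this is possible because $\Exp$ is a local diffeomorphism at $0$. Next, since $\Exp|_{\mathfrak{n}}=\Exp_{(N,b)}$ is a local diffeomorphism at $0$ as well, I would pick an open $0$-neighborhood $V_2\subseteq\mathfrak{n}$, shrinking it so that $V_2\subseteq V_1\cap\mathfrak{n}$, on which $\Exp$ restricts to a diffeomorphism onto an open subset of $N$. Here is where I use that $N$ is a submanifold: that image is of the form $N\cap O$ for some open $O\subseteq M$; replacing $O$ by $O_2:=O\cap U_1$ I obtain an open set $O_2\subseteq U_1$ with $\Exp(V_2)=N\cap O_2$.

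Finally I would set $V:=(\Exp|_{V_1})^{-1}(O_2)$, which is an open $0$-neighborhood in $\Lts(M,b)$ on which $\Exp$ is a diffeomorphism onto the open set $O_2\subseteq M$, so that $\Exp(V)=O_2$ and $\Exp(V)\cap N=\Exp(V_2)$. It then remains to check $V\cap\mathfrak{n}=V_2$. The inclusion $V_2\subseteq V\cap\mathfrak{n}$ is immediate from $V_2\subseteq V_1$, $\Exp(V_2)\subseteq O_2$ and $V_2\subseteq\mathfrak{n}$. Conversely, for $x\in V\cap\mathfrak{n}$ one has $\Exp(x)\in\Exp(V)=O_2$ and $\Exp(x)\in N$ (since $\Exp(\mathfrak{n})\subseteq N$ by Proposition~\ref{prop:LtsOfSubsymspace}), hence $\Exp(x)\in N\cap O_2=\Exp(V_2)$; writing $\Exp(x)=\Exp(y)$ with $y\in V_2\subseteq V_1$ and using that $\Exp$ is injective on $V_1$ gives $x=y\in V_2$. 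Therefore $\Exp(V\cap\mathfrak{n})=\Exp(V_2)=N\cap O_2=\Exp(V)\cap N$, which is the assertion.

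I expect no serious obstacle: the only step that is not pure bookkeeping with restrictions of local diffeomorphisms is the passage from ``$\Exp(V_2)$ open in $N$'' to ``$\Exp(V_2)=N\cap O$ with $O$ open in $M$'', and that is exactly the place where the submanifold hypothesis (as opposed to the weaker integral-subspace hypothesis) is essential; for a general integral subspace the statement of the proposition would be false.
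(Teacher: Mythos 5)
Your proof is correct and follows essentially the same route as the paper: take an exponential chart $W$ of $(M,b)$, an exponential chart $V_{\mathfrak n}$ of $(N,b)$ inside it, use the subspace topology of the submanifold $N$ to write $\Exp_{(M,b)}(V_{\mathfrak n})=N\cap O$ with $O$ open in $M$, intersect with the chart image, and pull back by $\Exp_{(M,b)}$ to get $V$. Your extra intermediate step $V\cap\mathfrak n=V_2$ is just a more explicit bookkeeping of the paper's injectivity argument, and your closing observation about where the submanifold hypothesis enters matches the paper's Remark~\ref{rem:integralSubspaceWithSubspaceTopology}.
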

\begin{proof}
	Let $W\subseteq \Lts(M,b)$ be an open $0$-neighborhood such that $\Exp_{(M,b)}|_W$ is a diffeomorphism onto an open subset of $M$ and let $V_\mathfrak{n}\subseteq \mathfrak{n}$ be an open $0$-neighborhood in $\mathfrak{n}$ such that $\Exp_{(N,b)}|_{V_\mathfrak{n}}=\Exp_{(M,b)}|_{V_\mathfrak{n}}$ is a diffeomorphism onto an open subset of $N$ and $V_\mathfrak{n}\subseteq W$. Since $N$ is a topological subspace of $M$, there exists an open set $U$ in $M$ with $\Exp_{(M,b)}(V_\mathfrak{n}) = U \cap N$. We assume that $U=\Exp_{(M,b)}(V)$ for some open $0$-neighborhood $V\subseteq W$, as we can otherwise replace $U$ by $U\cap \Exp_{(M,b)}(W)$.
	
	It is clear that $\Exp_{(M,b)}(V\cap \mathfrak{n})\subseteq\Exp_{(M,b)}(V) \cap N$, since $\Exp_{(M,b)}(\mathfrak{n})=\Exp_{(N,b)}(\mathfrak{n})\subseteq N$. Conversely, given any $x\in V$ with $\Exp_{(M,b)}(x)\in N$, we have $\Exp_{(M,b)}(x)\in\Exp_{(M,b)}(V_\mathfrak{n})$. As the exponential map is injective on $W$, we obtain $x\in V_\mathfrak{n}$ and hence $x\in\mathfrak{n}$, entailing the converse inclusion.
\end{proof}
\begin{remark}\label{rem:integralSubspaceWithSubspaceTopology}
	For the proof, it actually suffices to let $(N,b)$ be an integral subspace whose topology is induced by $M$, i.e., the inclusion map $\iota\colon N\rightarrow M$ is a topological embedding, but, by the assertion, $N$ is then automatically a submanifold of $M$ and hence a symmetric subspace.
\end{remark}
\begin{corollary}\label{cor:integralSubspaceWithSubspaceTopology}
	Let $M$ be a symmetric space and $N\leq M$ an integral subspace whose topology is induced by $M$, i.e., the inclusion map $\iota\colon N\rightarrow M$ is a topological embedding. Then $N$ is a symmetric subspace of $M$.
\end{corollary}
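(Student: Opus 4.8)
The plan is to deduce this directly from Proposition~\ref{prop:expChartOfSubsymSpace} in the strengthened form recorded in Remark~\ref{rem:integralSubspaceWithSubspaceTopology}, and then to read off a submanifold chart. What has to be checked is that $N$ is a local submanifold of $M$ at each of its points; since $N$ is a reflection subspace, this immediately yields that it is a symmetric subspace.

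First I would fix a base point $b\in N$ and identify, as in Section~\ref{sec:integralSubspaces}, the Lie triple system $\mathfrak{n}:=\Lts(N,b)$ with its image under the closed topological embedding $\Lts_b(\iota)$, so that $\mathfrak{n}$ is a closed subspace of the model space $\Lts(M,b)$ and $\Exp_{(N,b)}=\Exp_{(M,b)}|_\mathfrak{n}$. The key observation --- already made in Remark~\ref{rem:integralSubspaceWithSubspaceTopology} --- is that the proof of Proposition~\ref{prop:expChartOfSubsymSpace} uses the hypothesis ``$N$ is a submanifold'' only through the fact that the topology of $N$ is induced by $M$ (this is what allows one to pick an open $U\subseteq M$ with $\Exp_{(N,b)}(V_\mathfrak{n})=U\cap N$ for a small $0$-neighborhood $V_\mathfrak{n}\subseteq\mathfrak{n}$). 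Since that topological-embedding property is exactly what we assume here, the conclusion of Proposition~\ref{prop:expChartOfSubsymSpace} remains valid: there is an open $0$-neighborhood $V\subseteq\Lts(M,b)$ on which $\Exp_{(M,b)}$ restricts to a diffeomorphism onto an open subset of $M$, with $\Exp_{(M,b)}(V\cap\mathfrak{n})=\Exp_{(M,b)}(V)\cap N$.

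Next I would take $\varphi:=\bigl(\Exp_{(M,b)}|_V\bigr)^{-1}$, a chart of $M$ at $b$ defined on the open set $\Exp_{(M,b)}(V)$; the displayed equality gives $\varphi\bigl(\Exp_{(M,b)}(V)\cap N\bigr)=V\cap\mathfrak{n}$ with $\mathfrak{n}$ a closed subspace of $\Lts(M,b)$, so $N$ is a local submanifold of $M$ at $b$ in the sense of Section~\ref{sec:terminologySub}. As $b\in N$ was arbitrary and this argument goes through verbatim for every base point (the hypothesis on $\iota$ is base-point-free, and $\Lts_b(\iota)$ is a closed topological embedding for each $b\in N$ by the definition of an integral subspace), $N$ is a submanifold of $M$, hence a symmetric subspace. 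I do not anticipate any genuine obstacle: the substance is already contained in Proposition~\ref{prop:expChartOfSubsymSpace}, and the only point needing (minor) care is that the exponential chart it produces is simultaneously a submanifold chart for $N$ with closed model subspace $\mathfrak{n}$.
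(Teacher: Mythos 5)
Your proposal is correct and follows essentially the same route as the paper: Remark~\ref{rem:integralSubspaceWithSubspaceTopology} already records that the proof of Proposition~\ref{prop:expChartOfSubsymSpace} only uses the subspace-topology hypothesis, and the resulting identity $\Exp_{(M,b)}(V\cap\mathfrak{n})=\Exp_{(M,b)}(V)\cap N$ with $\mathfrak{n}$ closed is precisely the submanifold chart at each point. Your only addition is to spell out the chart $\varphi=(\Exp_{(M,b)}|_V)^{-1}$ explicitly, which is a faithful elaboration rather than a different argument.
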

\begin{proposition} \label{prop:subreflectionSpaceAndLocalSubmanifold}
	Let $(M,b)$ be a pointed symmetric space and $(N,b)\leq (M,b)$ be a reflection subspace that is a local submanifold at $b$.
	If the natural action of $\Aut(M,N):=\{g\in\Aut(M)\colon g(N)=N\}$ on $N$ is transitive (e.g.\ if $N$ is $\Inn(N)$-transitive), then $N$ is a submanifold of $M$, hence a symmetric subspace.
\end{proposition}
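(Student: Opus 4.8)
The plan is to show that $N$ is a local submanifold at \emph{every} point $x\in N$; since $N$ is already a reflection subspace, this immediately yields that $N$ is a submanifold of $M$, hence a symmetric subspace (cf.\ Section~\ref{sec:symmetricSubspaces}, where it is also noted that such an $N$ carries an induced symmetric space structure).

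First I would fix $x\in N$ and use the transitivity hypothesis to pick an automorphism $g\in\Aut(M,N)$ with $g(b)=x$. Every automorphism of a symmetric space is a diffeomorphism (its inverse is again an automorphism), so $g$ and $g^{-1}$ are diffeomorphisms of $M$ with $g(N)=g^{-1}(N)=N$. Since $N$ is a local submanifold at $b$, there are a chart $\varphi\colon U\rightarrow V\subseteq E$ of $M$ at $b$ (with $E$ a Banach space) and a closed subspace $F$ of $E$ such that $\varphi(U\cap N)=V\cap F$. Then $U^\prime:=g(U)$ is an open neighborhood of $x=g(b)$, and $\psi:=\varphi\circ\big(g^{-1}|_{U^\prime}\big)\colon U^\prime\rightarrow V$ is a chart of $M$ at $x$, being the composition of the diffeomorphism $g^{-1}|_{U^\prime}\colon U^\prime\rightarrow U$ with the chart $\varphi$. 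As $g^{-1}$ is a bijection of $M$ mapping $N$ onto $N$, we get $g^{-1}(U^\prime\cap N)=g^{-1}(U^\prime)\cap g^{-1}(N)=U\cap N$, whence $\psi(U^\prime\cap N)=\varphi(U\cap N)=V\cap F$. Thus $N$ is a local submanifold at $x$, and since $x\in N$ was arbitrary, $N$ is a submanifold of $M$.

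It remains to justify the parenthetical remark that $\Inn(N)$-transitivity of $N$ suffices. Since $N\leq M$ is a reflection subspace, for each $z\in N$ the symmetry $\mu_z$ of $M$ satisfies $\mu_z(N)=\mu(z,N)\subseteq N$, and, being involutive, even $\mu_z(N)=N$; hence $\mu_z\in\Aut(M,N)$, and its restriction $\mu_z|_N$ is precisely the symmetry of the reflection space $N$ at $z$. Consequently the group $\Inn(N)$ of inner automorphisms of $N$, regarded as a group of bijections of $N$, is contained in the image of $\Aut(M,N)$ under restriction to $N$. Therefore, if $N$ is $\Inn(N)$-transitive, then the natural action of $\Aut(M,N)$ on $N$ is transitive, and the preceding paragraph applies.

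The argument is essentially a transport of structure along automorphisms, so I do not expect a serious obstacle. The only point requiring (mild) care is to verify that transporting an $N$-adapted chart at $b$ by an element $g\in\Aut(M,N)$ yields an $N$-adapted chart at $g(b)$ — which rests exactly on $g$ being a diffeomorphism of $M$ with $g(N)=N$ — together with the bookkeeping identity $g^{-1}(U^\prime\cap N)=U\cap N$.
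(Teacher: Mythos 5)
Your proof is correct and follows essentially the same route as the paper: transport an $N$-adapted chart at $b$ to an arbitrary point of $N$ via an automorphism $g\in\Aut(M,N)$ supplied by the transitivity hypothesis. The extra paragraph justifying the parenthetical claim (that symmetries $\mu_z$ with $z\in N$ preserve $N$ and restrict to the symmetries of $N$, so $\Inn(N)$-transitivity implies transitivity of the $\Aut(M,N)$-action) is correct and merely spells out what the paper leaves implicit.
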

\begin{proof}
	Given any $b^\prime\in N$, there exists some $g\in\Aut(M,N)$ with $g(b)=b^\prime$ by assumption.
	Let $\varphi\colon U\rightarrow V\subseteq E$ be a chart of $M$ at $b$ with $\varphi(U\cap N)=V\cap F$ for some closed subspace $F$ of $E$. 
	Then the chart $\varphi_g\colon g(U)\rightarrow V\subseteq E$ at $b^\prime$ with $\varphi_g:=\varphi\circ g^{-1}$ satisfies $\varphi_g(g(U)\cap N) = V\cap F$, so that $N$ is a local submanifold at $b^\prime$ and hence a submanifold of $M$.
\end{proof}
\begin{lemma} \label{lem:ExF->M,(x,y)->Exp(x)Exp(y)}
	Let $(M,b)$ be a pointed symmetric space and let $E$ and $F$ be closed subspaces of $\Lts(M,b)$ such that $\Lts(M,b)=E\oplus F$ is a decomposition of $\Lts(M,b)$ as a direct sum of Banach spaces. Then the map
	$\Phi\colon E\times F \rightarrow M$, $(x,y)\mapsto \Exp_{(M,b)}(x)\cdot\Exp_{(M,b)}(y)$
	is a local diffeomorphism around $(0,0)$.
\end{lemma}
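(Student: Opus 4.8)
The plan is to compute the differential $T_{(0,0)}\Phi$, show that it is a topological isomorphism of Banach spaces, and then invoke the inverse function theorem for smooth maps between Banach manifolds. Smoothness of $\Phi$ is immediate: it is the composition of the continuous linear (hence smooth) inclusion $E\times F\hookrightarrow\Lts(M,b)\times\Lts(M,b)$, the smooth map $\Exp_{(M,b)}\times\Exp_{(M,b)}$, and the smooth multiplication $\mu$. Moreover $\Phi(0,0)=\Exp_{(M,b)}(0)\cdot\Exp_{(M,b)}(0)=b\cdot b=b$, since $b$ is the fixed point of $\mu_b$.

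The differential is then obtained by the chain rule. On the one hand, $T_0\Exp_{(M,b)}=\id_{\Lts(M,b)}$, so $T_{(0,0)}(\Exp_{(M,b)}\times\Exp_{(M,b)})$ is the identity of $\Lts(M,b)\times\Lts(M,b)=T_bM\times T_bM$, and composed with the inclusion it sends $(v,w)\in E\times F$ to $(v,w)\in T_bM\times T_bM$. On the other hand, I would use that the fibre $T_bM$ is a reflection subspace of the symmetric space $(TM,T\mu)$ (it is stable under $T\mu$ because $\mu(b,b)=b$), and that the induced product on this fibre is $v\cdot w=2v-w$; since $b$ is fixed by $\mu(b,\cdot)$, the restriction of $T\mu$ to $T_bM\times T_bM$ is precisely the linear map $T_{(b,b)}\mu$, so $T_{(b,b)}\mu(\xi,\eta)=2\xi-\eta$ for all $\xi,\eta\in T_bM$ (equivalently, $\partial_1\mu|_{(b,b)}=2\,\id_{T_bM}$ and $\partial_2\mu|_{(b,b)}=T_b\mu_b=-\id_{T_bM}$). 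Combining, one gets
\[T_{(0,0)}\Phi(v,w)\ =\ 2v-w\qquad\text{for all }(v,w)\in E\times F,\]
where the right-hand side is read as an element of $\Lts(M,b)=E\oplus F$.

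It then remains to observe that the continuous linear map $E\times F\to\Lts(M,b)$, $(v,w)\mapsto 2v-w$, is a topological isomorphism. This is clear, since it factors as $(v,w)\mapsto(2v,-w)\mapsto 2v+(-w)$: the first arrow is a topological automorphism of $E\times F$ (diagonal with the invertible blocks $2\,\id_E$ and $-\id_F$), and the second is the addition map $E\times F\to\Lts(M,b)$, which is a topological isomorphism exactly because $\Lts(M,b)=E\oplus F$ is assumed to be a direct sum of Banach spaces. Hence $T_{(0,0)}\Phi$ is a topological isomorphism, and the inverse function theorem gives that $\Phi$ restricts to a diffeomorphism from an open neighborhood of $(0,0)$ onto an open neighborhood of $b$ in $M$.

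The only point that requires a bit of care is the identification of $T_{(b,b)}\mu$; once one reads the relation $v\cdot w=2v-w$ on tangent spaces as the statement about the differential of $\mu$ at the fixed point $(b,b)$ recorded above, everything else is routine bookkeeping with the chain rule and the definition of a topological direct sum.
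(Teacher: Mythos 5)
Your proof is correct and follows essentially the same route as the paper: compute $T_{(0,0)}\Phi(v,w)=T_{(b,b)}\mu(v,w)=2v-w$ via the chain rule and the product formula $v\cdot w=2v-w$ on the fibre $T_bM$ of $(TM,T\mu)$, observe that this is a topological isomorphism $E\times F\to E\oplus F$, and apply the inverse function theorem. You merely spell out the identification of $T_{(b,b)}\mu$ and the isomorphism argument in more detail than the paper does.
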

\begin{proof}
	The tangent map of $\Phi$ at $(0,0)$ is given by
	\begin{eqnarray*}
		T_{(0,0)}\Phi(v,w) &=& T_{(b,b)}\mu\big(T_0(\Exp_{(M,b)}|_E)(v),T_0(\Exp_{(M,b)}|_F)(w)\big) \\
		&=& T_{(b,b)}\mu(v,w) \ =\ 2v - w
	\end{eqnarray*}
	(cf.\ Section~\ref{sec:LtsAndSymSpace}). 
	Thus it is a topological linear isomorphism, entailing the assertion.
\end{proof}%
\begin{proposition} \label{prop:integralSubSpace=subSymSpace}
	Let $(M,b)$ be a pointed symmetric space and $(N,b)\leq (M,b)$ be an $\Inn(N)$-transitive (e.g.\ connected) integral subspace with Lie triple system $\mathfrak{n}\leq \Lts(M,b)$ that splits as a Banach space. Let $F$ be a complement of $\mathfrak{n}$, i.e., $\Lts(M,b)=\mathfrak{n}\oplus F$.
	Then the following are equivalent:
	\begin{enumerate}
		\item[\rm (a)] $N$ is a symmetric subspace of $M$ (and hence automatically a split symmetric subspace).
		\item[\rm (b)] There exists a $0$-neighborhood $W\subseteq F$ with $N\cap \Exp_{(M,b)}(W)=\{b\}$.
	\end{enumerate}
\end{proposition}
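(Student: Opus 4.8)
The plan is to prove the equivalence (a)$\Leftrightarrow$(b), with (b)$\Rightarrow$(a) being the substantial direction. For (a)$\Rightarrow$(b), I would simply feed the hypothesis into Proposition~\ref{prop:expChartOfSubsymSpace}: it yields an open $0$-neighborhood $V\subseteq\Lts(M,b)$ on which $\Exp_{(M,b)}$ is a diffeomorphism onto an open subset of $M$ and which satisfies $\Exp_{(M,b)}(V\cap\mathfrak n)=\Exp_{(M,b)}(V)\cap N$. Setting $W:=V\cap F$, if $w\in W$ with $\Exp_{(M,b)}(w)\in N$, then $\Exp_{(M,b)}(w)\in\Exp_{(M,b)}(V)\cap N=\Exp_{(M,b)}(V\cap\mathfrak n)$, so injectivity of $\Exp_{(M,b)}$ on $V$ forces $w\in\mathfrak n\cap F=\{0\}$. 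Hence $N\cap\Exp_{(M,b)}(W)=\{b\}$, which is (b).

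For (b)$\Rightarrow$(a) I would first reduce to a local statement: since the natural action of $\Aut(M,N)$ on $N$ is transitive (it contains the transitive $\Inn(N)$-action, each $\mu_x$ with $x\in N$ restricting to an automorphism of $N$ because $N$ is $\mu$-stable), Proposition~\ref{prop:subreflectionSpaceAndLocalSubmanifold} reduces the claim to showing that $N$ is a local submanifold at $b$. To build a chart adapted to $N$, I would apply Lemma~\ref{lem:ExF->M,(x,y)->Exp(x)Exp(y)} with $E:=\mathfrak n$: the map $\Phi(x,y)=\Exp_{(M,b)}(x)\cdot\Exp_{(M,b)}(y)=\mu_{\Exp_{(M,b)}(x)}\bigl(\Exp_{(M,b)}(y)\bigr)$ is a local diffeomorphism around $(0,0)$, so for suitable open $0$-neighborhoods $V_1\subseteq\mathfrak n$ and $V_2\subseteq F$ the restriction $\Phi|_{V_1\times V_2}$ is a diffeomorphism onto an open $U\subseteq M$, and $\varphi:=(\Phi|_{V_1\times V_2})^{-1}\colon U\to V_1\times V_2\subseteq\mathfrak n\oplus F=\Lts(M,b)$ is a chart of $M$ at $b$. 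Shrinking further, I would also arrange $V_2\subseteq W$ and $\Exp_{(M,b)}|_{V_2}$ injective (possible since $\Exp_{(M,b)}$ is a local diffeomorphism at $0$).

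The heart of the argument is the reflection-subspace identity: for $x\in\mathfrak n$ one has $\Exp_{(M,b)}(x)=\Exp_{(N,b)}(x)\in N$, and since $N$ is stable under $\mu$ the involution $\mu_{\Exp_{(M,b)}(x)}$ maps $N$ bijectively onto $N$; hence $\Phi(x,y)\in N$ \emph{if and only if} $\Exp_{(M,b)}(y)\in N$. Consequently, if $(x,y)\in V_1\times V_2$ with $\Phi(x,y)\in N$, then $\Exp_{(M,b)}(y)\in N\cap\Exp_{(M,b)}(W)=\{b\}=\{\Exp_{(M,b)}(0)\}$, and injectivity of $\Exp_{(M,b)}$ on $V_2$ forces $y=0$; conversely $\Phi(x,0)=\mu_{\Exp_{(M,b)}(x)}(b)\in N$ for every $x\in V_1$. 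Thus $\varphi(U\cap N)=V_1\times\{0\}=(V_1\times V_2)\cap(\mathfrak n\times\{0\})$, exhibiting $N$ as a local submanifold at $b$ with splitting model subspace $\mathfrak n$ (complement $F$). Proposition~\ref{prop:subreflectionSpaceAndLocalSubmanifold}, whose proof merely transports this chart by elements of $\Aut(M,N)$ (which preserves the splitting), then promotes this to: $N$ is a split submanifold, i.e.\ a split symmetric subspace; alternatively the splitting can be read off from the equivalence recorded in Section~\ref{sec:symmetricSubspaces} for $\Inn(N)$-transitive $N$.

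I expect the delicate point to be the bookkeeping in (b)$\Rightarrow$(a): choosing $V_1\times V_2$ small enough simultaneously for $\Phi$ to be a diffeomorphism there, for $V_2\subseteq W$, and for $\Exp_{(M,b)}$ to be injective on $V_2$, and then checking that the ``only if'' direction of the reflection identity genuinely confines $U\cap N$ to $V_1\times\{0\}$, i.e.\ that no stray points of $N$ land in $U$. Everything rests on the equivalence $\Phi(x,y)\in N\Leftrightarrow\Exp_{(M,b)}(y)\in N$, which is precisely what converts hypothesis (b) into a local normal form for $N$ inside $M$.
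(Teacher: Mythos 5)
Your proposal is correct and follows essentially the same route as the paper: (a)$\Rightarrow$(b) via Proposition~\ref{prop:expChartOfSubsymSpace}, and (b)$\Rightarrow$(a) by reducing to a local submanifold chart at $b$ via Proposition~\ref{prop:subreflectionSpaceAndLocalSubmanifold} and Lemma~\ref{lem:ExF->M,(x,y)->Exp(x)Exp(y)}, with the identity $\Exp_{(M,b)}(y)=\Exp_{(M,b)}(x)\cdot\Phi(x\oplus y)$ doing the real work. The only cosmetic difference is that you conclude $y=0$ from injectivity of $\Exp_{(M,b)}$ on $V_2$, whereas the paper uses injectivity of $\Phi$ via $\Phi(0\oplus(-y))=b$.
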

\begin{proof}
	(a)$\Rightarrow$(b): Due to Proposition~\ref{prop:expChartOfSubsymSpace}, there exists an open $0$-neighborhood $V:=V_{\mathfrak{n}}\times V_F\subseteq \mathfrak{n}\oplus F$ such that $\Exp_{(M,b)}|_V$ is a diffeomorphism onto an open subset of $M$ and
	$\Exp_{(M,b)}(V_\mathfrak{n}) \ =\ \Exp_{(M,b)}(V)\cap N$.
	Intersecting both sides of this equation with $\Exp_{(M,b)}(V_F)$ leads to
	$$\{b\} \ =\ \Exp_{(M,b)}(V_\mathfrak{n}\cap V_F) \ =\ \Exp_{(M,b)}(V_F)\cap N.$$
	
	(b)$\Rightarrow$(a): Due to Proposition~\ref{prop:subreflectionSpaceAndLocalSubmanifold}, it suffices to show that $N$ is a local submanifold of $M$ at $b$. By Lemma~\ref{lem:ExF->M,(x,y)->Exp(x)Exp(y)}, there exists an open $0$-neighborhood $V:=V_\mathfrak{n}\times V_F\subseteq \mathfrak{n}\oplus F$ with $V_F\subseteq W$ such that $\Phi\colon \mathfrak{n}\oplus F\rightarrow M$, $x\oplus y\mapsto \Exp_{(M,b)}(x)\cdot\Exp_{(M,b)}(y)$ restricts to a diffeomorphism $\Phi|_V$ onto an open subset of $M$. We claim $\Phi(V_\mathfrak{n})=\Phi(V)\cap N$, which entails that $N$ is a local submanifold at $b$. The inclusion $\Phi(V_\mathfrak{n})\subseteq\Phi(V)\cap N$ is clear, since $\Phi(V_\mathfrak{n})\subseteq \Phi(\mathfrak{n})\subseteq N\cdot b \subseteq N$. To see the converse inclusion, take any $x\oplus y \in V$ with $\Phi(x\oplus y)\in N$ and show $y=0$. Because of $\Exp_{(M,b)}(x)\in\Exp_{(M,b)}(\mathfrak{n})\subseteq N$, we have $\Exp_{(M,b)}(y)=\Exp_{(M,b)}(x)\cdot \Phi(x\oplus y)\in N$ and hence $\Exp_{(M,b)}(y)\in N\cap\Exp_{(M,b)}(V_F)=\{b\}$. Thus we obtain $\Phi(-y)=b\cdot\Exp_{(M,b)}(-y)=\Exp_{(M,b)}(y)=b$, so that $y=0$ follows by the injectivity of $\Phi$.
\end{proof}
\begin{remark} \label{rem:(a)=>(b)WithoutInn(N)-transitivity}
	The proof of (a)$\Rightarrow$(b) does not require the $\Inn(N)$-transitivity of $N$.
\end{remark}

\begin{proposition}[Preimages of symmetric subspaces]
	\label{prop:preimagesOfSubsymspaces}
	Let $f\colon M_1 \rightarrow M_2$ be a morphism of symmetric spaces and $N_2\leq M_2$ a symmetric subspace. Then $N_1:=f^{-1}(N_2)$ is a symmetric subspace of $M_1$ and, for each base point $b_1\in M_1$, we have
	$$\Lts(N_1,b_1)=\Lts_{b_1}(f)^{-1}(\Lts(N_2,b_2))$$
	with $b_2:=f(b_1)$.
\end{proposition}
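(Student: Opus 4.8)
The plan is to check that $N_1:=f^{-1}(N_2)$ is a reflection subspace, then to prove that it is a \emph{local submanifold of $M_1$ at each of its points}---which makes it a symmetric subspace without any transitivity hypothesis---and finally to read off the Lie triple system from Proposition~\ref{prop:LtsOfSubsymspace} together with Lemma~\ref{lem:subsetN_1=f^-1(N_2)}. Stability of $N_1$ under multiplication is immediate: for $x,y\in N_1$ we have $f(x\cdot y)=f(x)\cdot f(y)\in N_2$, hence $x\cdot y\in N_1$. (No base point has been chosen yet, so $N_1$ may be empty, in which case there is nothing further to prove.)

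For the submanifold statement I would fix $b_1'\in N_1$, set $b_2':=f(b_1')\in N_2$, and treat $f$ as a morphism $(M_1,b_1')\to(M_2,b_2')$ of pointed symmetric spaces, with $A:=\Lts_{b_1'}(f)$ and the intertwining relation $f\circ\Exp_{(M_1,b_1')}=\Exp_{(M_2,b_2')}\circ A$ (cf.\ Section~\ref{sec:LtsAndSymSpace}). Writing $\mathfrak{n}_2':=\Lts(N_2,b_2')$, Proposition~\ref{prop:expChartOfSubsymSpace} provides an open $0$-neighborhood $V_2\subseteq\Lts(M_2,b_2')$ on which $\Exp_{(M_2,b_2')}$ restricts to a diffeomorphism onto an open set and
$$\Exp_{(M_2,b_2')}(V_2\cap\mathfrak{n}_2')=\Exp_{(M_2,b_2')}(V_2)\cap N_2 .$$
Moreover, Proposition~\ref{prop:LtsOfSubsymspace} gives $\Exp_{(M_2,b_2')}(\mathfrak{n}_2')\subseteq N_2$. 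By continuity of $A$, I then choose an open $0$-neighborhood $V_1\subseteq\Lts(M_1,b_1')$ with $A(V_1)\subseteq V_2$ on which $\Exp_{(M_1,b_1')}$ restricts to a diffeomorphism onto an open set $U_1\ni b_1'$; thus $\varphi:=(\Exp_{(M_1,b_1')}|_{V_1})^{-1}\colon U_1\to V_1$ is a chart of $M_1$ at $b_1'$ modelled on $\Lts(M_1,b_1')$, and $F:=A^{-1}(\mathfrak{n}_2')$ is a closed subspace of $\Lts(M_1,b_1')$.

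It remains to verify $\varphi(U_1\cap N_1)=V_1\cap F$, equivalently $\Exp_{(M_1,b_1')}(V_1\cap F)=U_1\cap N_1$. If $x\in V_1\cap F$, then $A(x)\in\mathfrak{n}_2'$ gives $f(\Exp_{(M_1,b_1')}(x))=\Exp_{(M_2,b_2')}(A(x))\in N_2$, so $\Exp_{(M_1,b_1')}(x)\in U_1\cap N_1$. Conversely, let $p=\Exp_{(M_1,b_1')}(x)\in U_1\cap N_1$ with $x\in V_1$; then $A(x)\in V_2$ and $f(p)=\Exp_{(M_2,b_2')}(A(x))\in\Exp_{(M_2,b_2')}(V_2)\cap N_2=\Exp_{(M_2,b_2')}(V_2\cap\mathfrak{n}_2')$, and since $\Exp_{(M_2,b_2')}|_{V_2}$ is injective this forces $A(x)\in\mathfrak{n}_2'$, i.e.\ $x\in V_1\cap F$. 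Hence $N_1$ is a local submanifold at $b_1'$; as $b_1'\in N_1$ was arbitrary, $N_1$ is a submanifold of $M_1$, i.e.\ a symmetric subspace.

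Finally, fix a base point $b_1\in N_1$ and put $b_2:=f(b_1)\in N_2$. Applying Proposition~\ref{prop:LtsOfSubsymspace} to the symmetric subspaces $N_i\leq M_i$ identifies $\Lts(N_i,b_i)$ with $\{x\in\Lts(M_i,b_i)\colon\Exp_{(M_i,b_i)}(\RR x)\subseteq N_i\}$, and then Lemma~\ref{lem:subsetN_1=f^-1(N_2)} yields exactly $\Lts(N_1,b_1)=\Lts_{b_1}(f)^{-1}(\Lts(N_2,b_2))$. The only point that needs care is the chart computation at $b_1'$, where one must keep track of the two neighborhoods $V_1,V_2$ and combine the exponential chart of $N_2$ from Proposition~\ref{prop:expChartOfSubsymSpace} with the injectivity of $\Exp_{(M_2,b_2')}|_{V_2}$; the remaining steps are a routine assembly of results already established.
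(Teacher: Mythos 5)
Your proposal is correct and follows essentially the same route as the paper's own proof: reduce to a local-submanifold statement at an arbitrary point of $N_1$, pull back the exponential chart of $N_2$ from Proposition~\ref{prop:expChartOfSubsymSpace} through the intertwining relation $f\circ\Exp_{(M_1,b_1')}=\Exp_{(M_2,b_2')}\circ\Lts_{b_1'}(f)$, and identify the Lie triple system via Proposition~\ref{prop:LtsOfSubsymspace} and Lemma~\ref{lem:subsetN_1=f^-1(N_2)}. The two-inclusion chart computation, including the use of injectivity of $\Exp_{(M_2,b_2')}|_{V_2}$, matches the paper's argument step for step.
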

\begin{proof}
	Being the preimage of a reflection subspace, $N_1$ is a reflection subspace of $M_1$. To see that it is a submanifold (and hence a symmetric subspace), we take any $b_1\in N_1$ and shall show that it is a local submanifold around $b_1$. We put $b_2:=f(b_1)$, $\mathfrak{n}_2:=\Lts(N_2,b_2)$ and $\mathfrak{n}_1:=\Lts_{b_1}(f)^{-1}(\mathfrak{n}_2)$. Due to Proposition~\ref{prop:expChartOfSubsymSpace}, there is an open $0$-neighborhood $V_2\subseteq \Lts(M_2,b_2)$  such that 
	$\Exp_{(M_2,b_2)}|_{V_2}$ is a diffeomorphism onto an open subset of $M_2$ and
	$$\Exp_{(M_2,b_2)}(V_2\cap \mathfrak{n}_2) \ =\ \Exp_{(M_2,b_2)}(V_2) \cap N_2.$$
	We restrict the exponential map of $(M_1,b_1)$ to an open $0$-neighborhood $V_1\subseteq \Lts(M_1,b_1)$ such that $\Exp_{(M_1,b_1)}|_{V_1}$ is a diffeomorphism onto an open subset of $M_1$ and
	$\Lts_{b_1}(f)(V_1)\subseteq V_2$.
	We claim that
	$$\Exp_{(M_1,b_1)}(V_1\cap \mathfrak{n}_1) \ =\  \Exp_{(M_1,b_1)}(V_1) \cap N_1,$$
	which entails that $N_1$ is a local submanifold around $b_1$.
	The inclusion $\Exp_{(M_1,b_1)}(V_1\cap \mathfrak{n}_1) \subseteq  \Exp_{(M_1,b_1)}(V_1) \cap N_1$ is clear, since $\Exp_{(M_1,b_1)}(V_1\cap \mathfrak{n}_1) \subseteq N_1$ follows by
	$$f(\Exp_{(M_1,b_1)}(\mathfrak{n}_1)) \ =\  \Exp_{(M_2,b_2)}(\Lts_{b_1}(f)(\mathfrak{n}_1)) \ \subseteq\ \Exp_{(M_2,b_2)}(\mathfrak{n}_2) \ \subseteq\ N_2.$$
	To see the converse inclusion, take any $x\in V_1$ with $\Exp_{(M_1,b_1)}(x)\in N_1$ and show that $x\in\mathfrak{n}_1$. 
	Because of
	$$\Exp_{(M_2,b_2)}(\Lts_{b_1}(f)(x)) \ =\ f(\Exp_{(M_1,b_1)}(x)) \ \in\ N_2,$$
	we obtain
	$$\Exp_{(M_2,b_2)}(\Lts_{b_1}(f)(x)) \ \in\ \Exp_{(M_2,b_2)}(V_2)\cap N_2 \ =\ \Exp_{(M_2,b_2)}(V_2\cap\mathfrak{n}_2),$$
	so that $\Lts_{b_1}(f)(x)\in \mathfrak{n}_2$, i.e., $x\in \mathfrak{n}_1$.
			
	It remains to prove that $\Lts(N_1,b_1)=\mathfrak{n}_1$.
	This follows immediately by Proposition~\ref{prop:LtsOfSubsymspace} and Lemma~\ref{lem:subsetN_1=f^-1(N_2)}.
\end{proof}
\begin{corollary}\label{cor:kernelOfMorphismOfPointedSymSpaces}
	Let $f\colon (M_1,b_1)\rightarrow (M_2,b_2)$ be a morphism of pointed symmetric spaces. Then its kernel $\ker(f):=f^{-1}(b_2)$ is a closed symmetric subspace of $(M_1,b_1)$ with Lie triple system $\Lts(\ker(f))=\ker(\Lts(f))$.
\end{corollary}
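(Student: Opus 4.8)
The plan is to deduce this directly from Proposition~\ref{prop:preimagesOfSubsymspaces}, applied to the one-point subset $N_2:=\{b_2\}\leq M_2$. So the first task is to check that $\{b_2\}$ is a (split) symmetric subspace of $M_2$ with trivial Lie triple system $\Lts(\{b_2\},b_2)=\{0\}$. That $\{b_2\}$ is a reflection subspace is immediate, since $\mu_{b_2}(b_2)=b_2$. That it is a submanifold is seen from any chart $\varphi\colon U\rightarrow V\subseteq E$ of $M_2$ at $b_2$ with $\varphi(b_2)=0$: taking the (split) closed subspace $F:=\{0\}\leq E$, we have $\varphi(U\cap\{b_2\})=\{0\}=V\cap F$, so $\{b_2\}$ is a local submanifold at its only point and hence a symmetric subspace. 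Its Lie triple system is the zero space, so that for the inclusion $\iota_2\colon\{b_2\}\hookrightarrow M_2$ we have $\im(\Lts_{b_2}(\iota_2))=\{0\}$.

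Now Proposition~\ref{prop:preimagesOfSubsymspaces}, applied with this $N_2$ and noting $b_1\in\ker(f)$ (so $b_1$ is an admissible base point), shows that $\ker(f)=f^{-1}(\{b_2\})$ is a symmetric subspace of $M_1$ and that
\[
\Lts(\ker(f),b_1)=\Lts_{b_1}(f)^{-1}\big(\Lts(\{b_2\},b_2)\big)=\Lts_{b_1}(f)^{-1}(\{0\})=\ker\big(\Lts_{b_1}(f)\big)=\ker(\Lts(f)),
\]
where the last identification uses that $\Lts(f)$ is by definition the tangent map $\Lts_{b_1}(f)=T_{b_1}f$ of the pointed morphism. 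Finally, $M_2$ is a (Hausdorff) manifold, so $\{b_2\}$ is closed in $M_2$; by continuity of $f$ the set $\ker(f)=f^{-1}(\{b_2\})$ is therefore closed in $M_1$.

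There is essentially no obstacle here: the statement is a direct corollary of the preimage result. The only points worth a word are the (routine) verification that an isolated point inside a symmetric space satisfies the local submanifold condition — alternatively, one simply observes that a $0$-dimensional submanifold does so trivially — and the observation that its Lie triple system is $\{0\}$, which makes the preimage formula of Proposition~\ref{prop:preimagesOfSubsymspaces} collapse to $\ker(\Lts(f))$.
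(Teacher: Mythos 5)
Your proposal is correct and is precisely the argument the paper intends: the corollary is stated as an immediate consequence of Proposition~\ref{prop:preimagesOfSubsymspaces} applied to the one-point symmetric subspace $\{b_2\}$ with trivial Lie triple system, exactly as you do. Your explicit verifications (that $\{b_2\}$ is a reflection subspace and a local submanifold, and that closedness follows from continuity of $f$) are routine but accurate.
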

%
%
%
%
%
%
%
%
\subsection{Supplement to the Homogeneity of Connected Symmetric Spaces} \label{sec:supplementHomogeneity}
For some purposes, the automorphism group of a connected symmetric space $M$ is too large and it is useful to consider the group $G(M)$ of displacements. In the finite-dimensional setting, this group is an integral subgroup of $\Aut(M)$ and leads to a further identification $M\cong G(M)/G(M)_b$ (cf.\ \cite{Loo69}). 
In the following, we shall deal with the Banach case.
\begin{definition}\label{def:G'(M,b)}
	Given a pointed connected symmetric space $(M,b)$, we consider the symmetric Lie group $(\Aut(M),c_{\mu_b})$ with Lie algebra $(\Der(M),(\mu_b)_\ast)$. We denote by $G^\prime(M,b):=\langle\exp(\mathfrak{g}^\prime(M,b))\rangle$ the connected integral subgroup of $\Aut(M)$ that belongs to the closed subalgebra $$\mathfrak{g}^\prime(M,b):=\overline{[\Der(M)_-,\Der(M)_-]}\oplus\Der(M)_-$$
	of $\Der(M)$ (cf.\ Lemma~\ref{lem:[n,n]+n}). By Lemma~\ref{lem:integralSubspaceOfSymLieGroup}, $G^\prime(M,b)$ is $c_{\mu_b}$-invariant and $(G^\prime(M,b),\sigma^\prime)$ with $\sigma^\prime:=c_{\mu_b}|_{G^\prime(M,b)}$ is a symmetric Lie group.
\end{definition}
\begin{proposition}\label{prop:M=G'(M,b)/G'(M,b)_b}
	Given a pointed connected symmetric space $(M,b)$, the symmetric pair $(G^\prime(M,b),\sigma^\prime,G^\prime(M,b)_b)$ where $G^\prime(M,b)_b\leq G^\prime(M,b)$ denotes the stabilizer subgroup of $b$ leads to an isomorphism
	$$\Phi^\prime\colon G^\prime(M,b)/G^\prime(M,b)_b\rightarrow M,\,  gG^\prime(M,b)_b\mapsto g(b)$$
	of symmetric spaces.
\end{proposition}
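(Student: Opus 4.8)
The plan is to identify $\Phi^\prime$ with the composition of the homogeneity isomorphism $\Phi\colon\Aut(M)/\Aut(M)_b\to M$ of Section~\ref{sec:AutOfConnectedSymSpace} and the morphism $\Sym(\iota)$ induced by the inclusion $\iota\colon G^\prime(M,b)\hookrightarrow\Aut(M)$, and then to prove that $\Sym(\iota)$ is an isomorphism of pointed symmetric spaces. I write $G:=\Aut(M)$, $\sigma:=c_{\mu_b}$, $\mathfrak{g}:=\Der(M)$, $G_b:=\Aut(M)_b$, $G^\prime:=G^\prime(M,b)$, $\sigma^\prime:=c_{\mu_b}|_{G^\prime}$ and $\mathfrak{g}^\prime:=\mathfrak{g}^\prime(M,b)$. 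By Definition~\ref{def:G'(M,b)} (together with Lemma~\ref{lem:[n,n]+n}), $G^\prime$ is a connected integral subgroup of $G$ and $(G^\prime,\sigma^\prime)$ is a symmetric Lie group whose symmetric Lie algebra $(\mathfrak{g}^\prime,L(\sigma)|_{\mathfrak{g}^\prime}^{\mathfrak{g}^\prime})$ satisfies $\mathfrak{g}^\prime_-=\Der(M)_-=\mathfrak{g}_-$. Recall that $(G,\sigma,G_b)$ is a symmetric pair and $\Phi$ is an isomorphism.

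First I would feed the injective morphism $\iota\colon(G^\prime,\sigma^\prime)\to(G,\sigma)$ into Lemma~\ref{lem:symToInjevtive} (equivalently, Example~\ref{ex:integralSubspaceOfHomogeneousSpace}): with $G^\prime_b:=\iota^{-1}(G_b)$ --- which is exactly the stabilizer of $b$ in $G^\prime$ and is open in $(G^\prime)^{\sigma^\prime}$ --- the triple $(G^\prime,\sigma^\prime,G^\prime_b)$ is a symmetric pair, $\iota$ is a morphism of symmetric pairs, and $\Sym(\iota)\colon G^\prime/G^\prime_b\to G/G_b$ is an injective morphism of pointed symmetric spaces. By (\ref{eqn:LtsSym=calLtsL}) its Lie triple system map is the inclusion $\mathfrak{g}^\prime_-\hookrightarrow\mathfrak{g}_-$, which is the identity of $\mathfrak{g}_-$ since $\mathfrak{g}^\prime_-=\mathfrak{g}_-$. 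As $\Sym(\iota)$ intertwines the exponential maps and $\Lts(\Sym(\iota))=\id$, in normal charts it is just $\id_{\mathfrak{g}_-}$, so it is a local diffeomorphism at the base point $G^\prime_b$, and $\Sym(\iota)\circ\Exp_{G^\prime/G^\prime_b}=\Exp_{G/G_b}$ on $\mathfrak{g}_-$.

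Next I would check surjectivity: the image of $\Sym(\iota)$ is a reflection subspace of $G/G_b$ that contains $\Exp_{G/G_b}(\mathfrak{g}_-)$, hence contains $\langle\Exp_{G/G_b}(\mathfrak{g}_-)\rangle$, which equals $G/G_b$ because $G/G_b\cong M$ is connected (Section~\ref{sec:LtsAndSymSpace}). Thus $\Sym(\iota)$ is a bijective morphism. To promote it to an isomorphism I would propagate the local-diffeomorphism property from the base point to all of $G^\prime/G^\prime_b$ by equivariance: by (\ref{eqn:SymTau}), $\Sym(\iota)\circ\tau_g^{G^\prime}=\tau_{\iota(g)}^{G}\circ\Sym(\iota)$ for every $g\in G^\prime$, where $\tau_g^{G^\prime}$ and $\tau_{\iota(g)}^{G}$ are automorphisms (in particular diffeomorphisms) of the underlying symmetric spaces, and $G^\prime$ acts transitively on $G^\prime/G^\prime_b$; hence $\Sym(\iota)$ is a local diffeomorphism at every point, and being bijective it is a diffeomorphism. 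So $\Sym(\iota)$ is an isomorphism of pointed symmetric spaces. (Alternatively, one could invoke the uniqueness of integral subspace structures, Lemma~\ref{lem:uniquenessOfIntegralSubspaces}, applied to the connected integral subspace $\Sym(\iota)$ of $G/G_b$ with Lie triple system $\mathfrak{g}_-$.)

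Finally I would observe that for $g\in G^\prime$ one has $\Sym(\iota)(gG^\prime_b)=\iota(g)G_b=gG_b$, whence $(\Phi\circ\Sym(\iota))(gG^\prime_b)=g(b)=\Phi^\prime(gG^\prime_b)$, i.e.\ $\Phi^\prime=\Phi\circ\Sym(\iota)$; as a composition of isomorphisms of pointed symmetric spaces, $\Phi^\prime$ is itself one. I expect the only genuine obstacle to be the step upgrading the bijective morphism $\Sym(\iota)$ to an isomorphism --- i.e.\ verifying that it is a local diffeomorphism everywhere and not merely at the base point --- which the equivariance identity (\ref{eqn:SymTau}) together with transitivity of the $G^\prime$-action resolves, provided one is careful that the translations involved are genuine diffeomorphisms; the remaining arguments are routine bookkeeping with the functor $\Sym$ and the identifications of Section~\ref{sec:LtsAndSymSpace}.
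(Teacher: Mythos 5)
Your proposal is correct and follows essentially the same route as the paper: both factor $\Phi^\prime$ through the homogeneity isomorphism $\Phi$ and the integral subspace $\Sym(\iota)\colon G^\prime(M,b)/G^\prime(M,b)_b\rightarrow \Aut(M)/\Aut(M)_b$ of Example~\ref{ex:integralSubspaceOfHomogeneousSpace} with $\Lts(\Sym(\iota))=\id_{\Der(M)_-}$, and both identify its image as $\langle\Exp_{\Aut(M)/\Aut(M)_b}(\Der(M)_-)\rangle=\Aut(M)/\Aut(M)_b$. The only divergence is the final step, where the paper concludes by the uniqueness of integral subspace structures (Proposition~\ref{prop:integralSubreflectionSpace}), whereas you upgrade the bijective morphism to a diffeomorphism directly via the equivariance identity (\ref{eqn:SymTau}) and transitivity --- an equally valid argument, and you already record the paper's uniqueness argument as your parenthetical alternative.
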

\begin{proof}
	The morphism $\iota\colon (G^\prime(M,b),\sigma^\prime,G^\prime(M,b)_b)\rightarrow (\Aut(M),c_{\mu_b},\Aut(M)_b)$ of symmetric pairs induces an integral subspace $\Sym(\iota)\colon G^\prime(M,b)/G^\prime(M,b)_b\rightarrow \Aut(M)/\Aut(M)_b$ with $\Lts(\Sym(\iota))=\id_{\Der(M)_-}$ (cf.\ Example~\ref{ex:integralSubspaceOfHomogeneousSpace}). Since its image $\im(\Sym(\iota))$ is given by $$\big\langle\Exp_{\Aut(M)/\Aut(M)_b}(\Der(M)_-)\big\rangle \ =\ (\Aut(M)/\Aut(M)_b)_0 \ =\ \Aut(M)/\Aut(M)_b,$$
	it is a diffeomorphism by an argument of uniqueness (cf.\ Proposition~\ref{prop:integralSubreflectionSpace}).
\end{proof}
\begin{corollary}\label{cor:G'(M,b)ActsTransitively}
	The natural smooth action $G^\prime(M,b)\times M\rightarrow M$ is transitive.
\end{corollary}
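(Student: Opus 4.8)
The plan is to reduce everything to Proposition~\ref{prop:M=G'(M,b)/G'(M,b)_b}. Write $G^\prime:=G^\prime(M,b)$ and $K^\prime:=G^\prime(M,b)_b$ for brevity, and let $\Phi^\prime\colon G^\prime/K^\prime\rightarrow M$ be the isomorphism of symmetric spaces provided there. First I would recall that the natural action $\tau^{G^\prime}\colon G^\prime\times G^\prime/K^\prime\rightarrow G^\prime/K^\prime$, $(g,hK^\prime)\mapsto ghK^\prime$ (cf.\ (\ref{eqn:tau})) is smooth and transitive: smoothness is part of the construction of $\Sym(G^\prime,\sigma^\prime,K^\prime)$ recalled in Section~\ref{sec:symLieAlgAndLieGrp}, and transitivity is obvious since any two cosets are related by left multiplication.

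Next I would check that $\Phi^\prime$ intertwines $\tau^{G^\prime}$ with the natural action of $G^\prime$ on $M$ (the latter being obtained by restricting the action $\tau\colon\Aut(M)\times M\rightarrow M$ along the inclusion $G^\prime\hookrightarrow\Aut(M)$). Indeed, for $g\in G^\prime$ and $hK^\prime\in G^\prime/K^\prime$ we have
$$\Phi^\prime\big(\tau^{G^\prime}_g(hK^\prime)\big) \ =\ \Phi^\prime(ghK^\prime) \ =\ (gh)(b) \ =\ g\big(h(b)\big) \ =\ g\big(\Phi^\prime(hK^\prime)\big),$$
so $\Phi^\prime\circ\tau^{G^\prime}_g = g\circ\Phi^\prime$ for all $g\in G^\prime$. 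Since $\Phi^\prime$ is a bijection and $\tau^{G^\prime}$ is transitive, the action of $G^\prime$ on $M$ is transitive as well. For smoothness, note that the inclusion $\iota\colon G^\prime\rightarrow\Aut(M)$ is smooth ($G^\prime$ being an integral subgroup) and $\tau\colon\Aut(M)\times M\rightarrow M$ is smooth (cf.\ Section~\ref{sec:AutOfConnectedSymSpace}), so the composite $G^\prime\times M\xrightarrow{\iota\times\id_M}\Aut(M)\times M\xrightarrow{\tau} M$ is smooth.

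I do not expect any genuine obstacle here: the corollary is essentially a restatement of Proposition~\ref{prop:M=G'(M,b)/G'(M,b)_b} together with the transitivity of left translation on a coset space. The only point requiring a word of care is the identification of the ``natural action'' of $G^\prime$ on $M$ with the transported action $\tau^{G^\prime}$ on $G^\prime/K^\prime$, which is precisely the intertwining identity displayed above.
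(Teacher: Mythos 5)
Your argument is correct and matches the paper's (implicit) reasoning: the corollary is stated without proof precisely because the surjectivity of $\Phi^\prime$ from Proposition~\ref{prop:M=G'(M,b)/G'(M,b)_b}, i.e.\ that every $x\in M$ is of the form $g(b)$ for some $g\in G^\prime(M,b)$, immediately gives transitivity, and smoothness follows by restricting the smooth $\Aut(M)$-action along the smooth inclusion of the integral subgroup. Your intertwining identity is a slightly more explicit packaging of the same observation.
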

\begin{remark}\label{rem:G(M)<G'(M,b)<G(M)closure}
	The subgroup $G^\prime(M,b)$ of $\Aut(M)$ satisfies $G(M)\leq G^\prime(M,b)\leq \overline{G(M)}$, where the closure is taken in $\Aut(M)$. Indeed, the group $G(M)$ of displacements is the smallest $c_{\mu_b}$-invariant subgroup of $\Aut(M)$ that acts transitively on $M$ (cf.\ \cite[p.~93]{Loo69}), so that the inclusion $G(M)\subseteq G^\prime(M,b)$ follows. Since for each $\xi\in\Der(M)_-$, its flow map $\flow^{\xi}_1$ is given by $\flow^{\xi}_1=\mu_{\Exp_{(M,b)}(\frac{1}{2}\xi(b))}\circ\mu_b$ (\cite[Th.~5.5.1]{Klo09b}), the exponential map $\exp$ of $\Aut(M)$ maps $\Der(M)_-$ into $G(M)$, so that we have $\exp(\mathfrak{g}^\prime(M,b))\subseteq \overline{G(M)}$ by the Trotter product formula and the commutator formula (cf.\ (\ref{eqn:TrotterProductFormula}) and (\ref{eqn:commutatorFormula})).
\end{remark}
%
%
%
%
%
%
%
\subsection{Quotients}\label{sec:quotients}
An equivalence relation $R\subseteq M\times M$ on a reflection space $M$ that is a reflection subspace of $M\times M$ is called a \emph{congruence relation}. A congruence relation $R$ is just an equivalence relation for which the multiplication map on $M$ induces a multiplication map on the quotient $M/R$ (cf.\ quotient laws for magmas in \cite[I.1.6]{Bou89Algebra}). It can be easily checked that $M/R$ becomes a reflection space.
Given a congruence relation $R$ on $M$, every inner automorphism $g\in\Inn(M)$ maps equivalence classes onto equivalence classes. To see this, one firstly observes that for every symmetry $\mu_z$, $z\in M$, a given pair $(x,y)\in M\times M$ satisfies $(x,y)\in R$ if and only if we have $(\mu_z(x),\mu_z(y))\in R$ and secondly respects that $\Inn(M)$ is generated by the set of all symmetries $\mu_z$, $z\in M$.
Thus, if the group $\Inn(M)$ of inner automorphisms of $M$ acts transitively on $M$ (e.g.\ if $M$ is a connected symmetric space), then a congruence relation is determined by each of its equivalence classes, which we then call \emph{normal reflection subspaces of $M$}, denoted by $N\unlhd M$.
In this section, we characterize closed connected normal reflection subspaces of a connected symmetric space for which the quotient becomes a symmetric space.
\begin{lemma} \label{lem:Nclosed<=>Rclosed}
	Let $M$ be an $\Inn(M)$-transitive (e.g.\ connected) symmetric space, $N\unlhd M$ be a normal reflection subspace and $R\leq M\times M$ be its associated congruence relation on $M$. Then $N$ is a closed subset of $M$ if and only if $R$ is a closed subset of $M\times M$.
\end{lemma}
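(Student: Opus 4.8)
The statement is an equivalence, one direction of which is immediate. Fix a base point $b\in N$; since $\Inn(M)$ acts transitively on $M$, the set $N$ is precisely the $R$-equivalence class of $b$, that is, $N=\{x\in M\colon (x,b)\in R\}$. If $R$ is closed, then $N=j^{-1}(R)$ for the continuous map $j\colon M\rightarrow M\times M$, $x\mapsto(x,b)$, so $N$ is closed; nothing more is needed here.

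For the converse, assume $N$ is closed. The plan is to check that $R$ is closed \emph{locally} near the slice $\{b\}\times M$ and then propagate this by homogeneity; the engine is a continuous local section of the $\Inn(M)$-action near $b$. Choosing a normal chart $\Exp_{(M,b)}|_{V_0}\colon V_0\rightarrow U_0$ of $M$ around $b$ with inverse $v\colon U_0\rightarrow V_0$, I would set, for $x\in U_0$,
$$g_x:=\mu_{\Exp_{(M,b)}(\tfrac{1}{2}v(x))}\circ\mu_b\ \in\ \Inn(M),$$
which is the translation along the one-parameter subspace $t\mapsto\Exp_{(M,b)}(tv(x))$; it satisfies $g_x(b)=x$ and $g_b=\id_M$. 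Since $\mu$ and $v$ are smooth, the map $\Theta\colon U_0\times M\rightarrow M$, $\Theta(x,y):=g_x^{-1}(y)=\mu_b\big(\mu_{\Exp_{(M,b)}(\tfrac{1}{2}v(x))}(y)\big)$, is smooth, hence continuous.

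Because inner automorphisms carry $R$-classes to $R$-classes, $(g_x^{-1}\times g_x^{-1})(R)=R$ for each $x\in U_0$, so for $(x,y)\in U_0\times M$ one has $(x,y)\in R\iff (b,\Theta(x,y))\in R\iff \Theta(x,y)\in N$. Hence $R\cap(U_0\times M)=\Theta^{-1}(N)$ is closed in $U_0\times M$, $N$ being closed. To globalize, for each $a\in M$ I would choose $h_a\in\Inn(M)$ with $h_a(a)=b$; then $h_a\times h_a$ is a homeomorphism of $M\times M$ with $(h_a\times h_a)(R)=R$ carrying $h_a^{-1}(U_0)\times M$ onto $U_0\times M$, whence $R\cap(h_a^{-1}(U_0)\times M)$ is closed in $h_a^{-1}(U_0)\times M$. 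As the sets $h_a^{-1}(U_0)\times M$ ($a\in M$) form an open cover of $M\times M$ and closedness is a local property with respect to open covers, $R$ is closed.

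The only point requiring genuine work is the construction of a \emph{continuous} family $x\mapsto g_x$ of $R$-preserving automorphisms with $g_x(b)=x$; this is precisely where the symmetric-space structure enters (normal charts and the fact that the translations $\tau_{\alpha,s}=\mu_{\alpha(s/2)}\circ\mu_{\alpha(0)}$ lie in $\Inn(M)$), as opposed to a bare reflection space. Everything else is formal bookkeeping with the two facts that inner automorphisms preserve $R$ and act transitively.
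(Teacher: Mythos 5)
Your proof is correct. The easy direction matches the paper's ($N=j^{-1}(R)$ for the continuous map $j(x)=(x,b)$). For the converse the paper argues sequentially: it first notes that every equivalence class is closed (being the image of the closed set $N$ under an inner automorphism), then takes a convergent sequence $(x_n,y_n)\to(x,y)$ in $R$ and applies the symmetry at the ``midpoint'' $\sqrt{x_n}$ (defined via a normal chart at the \emph{limit point} $x$) to push $y_n$ into the fixed class $[x]$, concluding $x\cdot y\in[x]$ and hence $(x,y)\in R$. Your argument uses the same essential mechanism --- a continuously varying family of inner automorphisms built from the square-root/translation map in a normal chart, together with the fact that inner automorphisms preserve $R$ --- but packages it non-sequentially: you exhibit $R\cap(U_0\times M)$ as the preimage $\Theta^{-1}(N)$ of the closed set $N$ under a continuous map, and then cover $M\times M$ by $\Inn(M)$-translates of $U_0\times M$. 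This buys you independence from first countability (the paper's sequential argument silently relies on Banach manifolds being first countable, which is true but unstated) and localizes at the base point $b$ rather than at an arbitrary limit point. The only cosmetic slip is your justification that $N$ equals the class of $b$: this holds simply because $b\in N$ and $N$ is by definition an equivalence class of $R$; the transitivity of $\Inn(M)$ is what makes $R$ recoverable from $N$, not what identifies $N$ with $[b]$.
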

\begin{proof}
	If $R\subseteq M\times M$ is closed, then $N\subseteq M$ is closed, too, since $N=f^{-1}(R)$ with the morphism $f\colon M\rightarrow M\times M$, $x\mapsto (x,b)$, where we fix some $b\in N$.  Conversely, we assume $N$ to be closed in $M$. Then every equivalence class is closed in $M$, since each one can be obtained as the image of $N$ under an (inner) automorphism of $M$. Given any sequence $(x_n,y_n)_{n\in\NN}$ in $R$ with limit $(x,y)$ in $M\times M$, we shall show that $(x,y)\in R$. Let $$\varphi:=(\Exp_{(M,x)}|_V^U)^{-1}\colon U \rightarrow V\subseteq \Lts(M,x)$$ be a normal chart at $x$. The smooth map
	$$\textstyle\sqrt{\cdot}\colon U\rightarrow U,\ \Exp_{(M,x)}(v)\mapsto \Exp_{(M,x)}(\frac{1}{2}v) \quad \text{(with $v\in V$)}$$
	satisfies $\sqrt{u}\cdot u = x$ and $\sqrt{u}\cdot x = u$ for all $u\in U$ (cf.\ Section~\ref{sec:LtsAndSymSpace}) and further $\sqrt{x}=x$.
	For sufficiently large $n\in\NN$, we have $x_n\in U$, so that
	$$x\cdot y \ =\ \lim_{n\rightarrow\infty}\sqrt{x_n}\cdot \lim_{n\rightarrow\infty}y_n \ =\ \lim_{n\rightarrow\infty}(\sqrt{x_n}\cdot y_n)$$
	by the continuity of the map $\sqrt{\cdot}$ and of the multiplication map. Each $\sqrt{x_n}\cdot y_n$ is obtained in the equivalence class $[\sqrt{x_n}\cdot x_n]=[x]$, entailing that $x\cdot y\in [x]$ by the closedness of the equivalence classes. Hence it follows that $y\in [x\cdot x]=[x]$, i.e., $(x,y)\in R$.
\end{proof}
\begin{proposition}[Lie triple systems of closed normal reflection subspaces]
\label{prop:LtsOfClosedNormalReflectionSubspace}
	Let $(M,b)$ be a pointed $\Inn(M)$-transitive (e.g.\ connected) symmetric space and $(N,b)\unlhd (M,b)$ be a closed normal reflection subspace. Then the Lie triple system $\mathfrak{n}\leq \Lts(M,b)$ of $N$ is a (closed) ideal.
\end{proposition}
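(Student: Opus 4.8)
The plan is to realize $\mathfrak{n}$ as a ``slice'' of the Lie triple system of the congruence relation $R$ associated with $N$, and to exploit that $R$ contains the diagonal because it is an equivalence relation. First I would invoke Lemma~\ref{lem:Nclosed<=>Rclosed}: since $M$ is $\Inn(M)$-transitive and $N$ is closed, the congruence relation $R$ is closed in $M\times M$. Because $b\in N$, we have $(b,b)\in R$, so $R$ is a closed reflection subspace of the pointed symmetric space $(M\times M,(b,b))$, and Proposition~\ref{prop:LtsOfClosedReflectionSubspace} applies: it has a Lie triple system $\mathfrak{r}:=\{X\in\Lts(M\times M,(b,b))\colon \Exp_{(M\times M,(b,b))}(\RR X)\subseteq R\}$, which is a closed triple subsystem of $\Lts(M\times M,(b,b))$.

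Next I would record the standard identification $\Lts(M\times M,(b,b))\cong\Lts(M,b)\times\Lts(M,b)$, under which the triple bracket is componentwise and $\Exp_{(M\times M,(b,b))}(v,w)=(\Exp_{(M,b)}(v),\Exp_{(M,b)}(w))$; the first statement follows from the fact that the coordinate projections are morphisms of symmetric spaces (hence $\Lts$ of them are bracket-preserving), and the second from the fact that componentwise one-parameter subspaces are one-parameter subspaces of the product. Then I would apply Proposition~\ref{prop:preimageOfClosedReflectionSubspace} to the morphism $f\colon (M,b)\to (M\times M,(b,b))$, $x\mapsto(x,b)$, which is a morphism of pointed symmetric spaces satisfying $N=f^{-1}(R)$ and $\Lts(f)(v)=(v,0)$. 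This gives $\mathfrak{n}=\Lts(f)^{-1}(\mathfrak{r})=\{v\in\Lts(M,b)\colon (v,0)\in\mathfrak{r}\}$, which in particular re-confirms closedness of $\mathfrak{n}$.

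To conclude, I would use reflexivity of $R$ once more: since $R$ contains the diagonal of $M$, the curve $t\mapsto(\Exp_{(M,b)}(tx),\Exp_{(M,b)}(tx))$ lies in $R$ for every $x\in\Lts(M,b)$, so $\mathfrak{r}$ contains the diagonal $\{(x,x)\colon x\in\Lts(M,b)\}$. Now fix $v\in\mathfrak{n}$ and $x,y\in\Lts(M,b)$. Then $(v,0),(x,x),(y,y)\in\mathfrak{r}$, and since $\mathfrak{r}$ is a triple subsystem,
$$[(v,0),(x,x),(y,y)] \ =\ ([v,x,y],[0,x,y]) \ =\ ([v,x,y],0) \ \in\ \mathfrak{r},$$
hence $[v,x,y]\in\mathfrak{n}$. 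This proves $[\mathfrak{n},\Lts(M,b),\Lts(M,b)]\subseteq\mathfrak{n}$, i.e.\ $\mathfrak{n}\unlhd\Lts(M,b)$, and closedness is already part of Proposition~\ref{prop:LtsOfClosedReflectionSubspace}.

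I do not expect a serious obstacle in this argument; the only point that needs a little care is spelling out the identification of the Lie triple system and exponential map of a product symmetric space with the componentwise structures, together with checking that $f$ and the coordinate projections are genuine morphisms, so that functoriality of $\Lts$ and Propositions~\ref{prop:LtsOfClosedReflectionSubspace} and~\ref{prop:preimageOfClosedReflectionSubspace} can legitimately be applied.
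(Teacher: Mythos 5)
Your proposal is correct and follows essentially the same route as the paper: pass to the closed congruence relation $R\leq M\times M$ (via Lemma~\ref{lem:Nclosed<=>Rclosed}), identify $\mathfrak{n}\times\{0\}$ as the slice $\mathfrak{r}\cap(\Lts(M,b)\times\{0\})$ of the Lie triple system $\mathfrak{r}$ of $R$, note that $\mathfrak{r}$ contains the diagonal, and compute $[(v,0),(x,x),(y,y)]=([v,x,y],0)\in\mathfrak{r}$. The only cosmetic difference is that you obtain the slice identity by applying Proposition~\ref{prop:preimageOfClosedReflectionSubspace} to $x\mapsto(x,b)$, whereas the paper derives it directly from $N\times\{b\}=R\cap(M\times\{b\})$.
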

\begin{proof}
	Cf.\ \cite[p.~131]{Loo69} for the finite-dimensional case. Let $R\leq M\times M$ be the associated congruence relation on $M$.	We put $\mathfrak{m}:=\Lts(M,b)$ and let $\mathfrak{r}\leq\mathfrak{m}\times\mathfrak{m}$ be the Lie triple system of the closed reflection subspace $R\leq M\times M$ (cf.\ Lemma~\ref{lem:Nclosed<=>Rclosed}), so that $$\mathfrak{r} \ =\ \{(x,y)\in\mathfrak{m}\times\mathfrak{m}\colon (\Exp_{(M,b)}(tx),\Exp_{(M,b)}(ty))\in R \text{ for all $t\in\RR$}\}$$
	by Definition~\ref{def:LtsOfClosedSubreflectionSpace}.
	Since we have $N\times \{b\}=R\cap (M\times \{b\})$, it follows that
	\begin{eqnarray*}
		\mathfrak{n}\times \{0\} &=& \{(x,y)\in\mathfrak{m}\times\mathfrak{m}\colon (\Exp_{(M,b)}(tx),\Exp_{(M,b)}(ty))\in N\times \{b\} \text{ for all $t\in\RR$}\} \\
		&=& \{(x,y)\in\mathfrak{m}\times\mathfrak{m}\colon (\Exp_{(M,b)}(tx),\Exp_{(M,b)}(ty))\in R\cap (M\times \{b\}) \text{ for all $t\in\RR$}\} \\
		&=& \mathfrak{r}\cap (\mathfrak{m}\times \{0\}).
	\end{eqnarray*} 
	To see that $\mathfrak{n}\leq \mathfrak{m}$ is an ideal, take any $x\in\mathfrak{n}$ and $y,z\in\mathfrak{m}$ and show that $[x,y,z]\in\mathfrak{n}$. For this, we firstly observe that $\mathfrak{r}$ contains the diagonal $\Delta_{\mathfrak{m}}$, since $R$ contains the diagonal $\Delta_M$. Therefore, we have
	$$([x,y,z],0) \ =\ ([x,y,z],[0,y,z]) \ =\ [(x,0),(y,y),(z,z)] \ \in\ [\mathfrak{r},\mathfrak{r},\mathfrak{r}] \ \subseteq\ \mathfrak{r}$$
	and hence $([x,y,z],0)\in\mathfrak{r}\cap (\mathfrak{m}\times\{0\})=\mathfrak{n}\times\{0\}$, so that $[x,y,z]\in\mathfrak{n}$.
\end{proof}
We now intend to show that, given a pointed connected symmetric space $(M,b)$ and a closed ideal $\mathfrak{n}\unlhd \Lts(M,b)$, the corresponding connected integral subspace $(N,b)\leq (M,b)$ is normal. Identifying a connected symmetric space with the quotient $G/K$ of a symmetric pair, the idea is to obtain $N$ as the image of a normal subgroup $L\unlhd G$ under the quotient map. This is due to \cite{Loo69} in the finite-dimensional case.  Firstly, we need the following lemmas.
\begin{lemma} \label{lem:imageRofS}
	Let $G$ be a group, $K\leq G$ a subgroup and $q\colon G\rightarrow G/K$, $g\mapsto gK$ the quotient map. Let $L\unlhd G$ be a normal subgroup and $S:=\{(g_1,g_2)\in G\times G\colon g_1^{-1}g_2\in L\}$ the equivalence relation on $G$ that is induced by $L\unlhd G$. The image $R\subseteq G/K \times G/K$ of $S$ under the map $q\times q\colon G\times G\rightarrow G/K \times G/K$ is an equivalence relation on $G/K$. It is given by $R=\{(g_1K,g_2K)\in G/K\times G/K\colon g_1^{-1}g_2\in LK\}$. Every equivalence class of $S$ is mapped by $q$ onto an equivalence class of $R$.
\end{lemma}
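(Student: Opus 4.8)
The plan is to reduce everything to elementary coset bookkeeping, the key preliminary being that $LK$ is a subgroup of $G$. This holds because $L$ is normal: for $\ell_1 k_1,\ell_2 k_2\in LK$ one has $\ell_1 k_1\ell_2 k_2 = \bigl(\ell_1(k_1\ell_2 k_1^{-1})\bigr)(k_1 k_2)\in LK$, and $(\ell k)^{-1} = (k^{-1}\ell^{-1}k)\,k^{-1}\in LK$; moreover $K\subseteq LK$. This single fact will drive all three assertions.

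First I would identify $R$ explicitly. By definition $R = \{(g_1 K,g_2 K)\colon g_1^{-1}g_2\in L\}$, and the inclusion into $\{(g_1 K,g_2 K)\colon g_1^{-1}g_2\in LK\}$ is clear since $e\in K$. For the reverse inclusion, given $g_1 K,g_2 K$ with $g_1^{-1}g_2 = \ell k$ where $\ell\in L$ and $k\in K$, one has $g_2 K = g_1\ell k K = g_1\ell K$, so $(g_1 K,g_2 K) = (q(g_1),q(g_1\ell))$ with $g_1^{-1}(g_1\ell) = \ell\in L$; hence $(g_1 K,g_2 K)\in R$. This proves the claimed description $R = \{(g_1 K,g_2 K)\colon g_1^{-1}g_2\in LK\}$.

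Given this description, the verification that $R$ is an equivalence relation is immediate from the fact that $LK$ is a subgroup: reflexivity uses $e\in LK$, symmetry uses $(g_1^{-1}g_2)^{-1} = g_2^{-1}g_1$ together with closure of $LK$ under inverses, and transitivity uses $g_1^{-1}g_3 = (g_1^{-1}g_2)(g_2^{-1}g_3)$ together with closure of $LK$ under products. One may also observe that, since $K\subseteq LK$, the assignment $gK\mapsto g(LK)$ is a well-defined surjection $G/K\to G/(LK)$, and $R$ is exactly the equivalence relation whose classes are its fibres.

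Finally, for the last assertion I would compare the classes directly. The $S$-class of an element $g\in G$ is the coset $gL$, whose $q$-image is $q(gL) = \{g\ell K\colon \ell\in L\}$; the $R$-class of $q(g) = gK$ is, by the description above, $\{hK\colon g^{-1}h\in LK\} = \{hK\colon h\in gLK\}$. The inclusion $q(gL)\subseteq\{hK\colon h\in gLK\}$ follows from $gL\subseteq gLK$, and the converse holds because any $h = g\ell k\in gLK$ satisfies $hK = g\ell K\in q(gL)$. Hence $q$ carries the $S$-class $gL$ onto the $R$-class of $gK$, which is precisely the claim. I do not expect any genuine obstacle here; the only point demanding attention throughout is the consistent use of the normality of $L$ to absorb the stray $K$-factors.
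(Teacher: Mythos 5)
Your proof is correct and follows essentially the same route as the paper's: both arguments rest on the observation that $LK$ is a subgroup of $G$ (by normality of $L$), use it to identify $R$ with $\{(g_1K,g_2K)\colon g_1^{-1}g_2\in LK\}$, and then read off the equivalence-relation axioms and the class correspondence $q(gL)=[gK]_R$ from that description. The only cosmetic difference is that you verify the description of $R$ by a two-sided inclusion, whereas the paper unwinds the existence of representatives $g_1k_1,g_2k_2$ directly; the content is identical.
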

\begin{proof}
	Since $L$ is a normal subgroup of $G$, the set $P:=LK=KL$ is a subgroup of $G$. We claim that
	\begin{equation} \label{eqn:P=LK}
		(g_1K,g_2K)\in R \ \Leftrightarrow\ g_1^{-1}g_2\in P \ \Leftrightarrow\ g_2^{-1}g_1\in P
	\end{equation}
	for all $g_1,g_2\in G$. The second equivalence is trivial, $P$ being invariant under inversion. Given any $g_1,g_2\in G$, we have $(g_1K,g_2K)\in R$ if and only if there exists some $k_1,k_2\in K$ with $(g_1k_1,g_2k_2)\in S$, i.e., with $k_1^{-1}g_1^{-1}g_2k_2\in L$ or, equivalently, with $g_1^{-1}g_2\in Lk_1k_2^{-1}$. This holds if and only if $g_1^{-1}g_2\in LK=P$. To see that $R$ is an equivalence relation follows then easily by (\ref{eqn:P=LK}). Given any $g_1\in G$, the equivalence class of $g_1K \in G/K$ is given by\linebreak $\{g_2K\in G/K\colon g_2\in g_1LK\}$, i.e., by $g_1LKK=g_1LK$, and is hence the image of the equivalence class $g_1L$ of $g_1$ under the map $q$.
\end{proof}
\begin{remark} \label{rem:subgroupS}
	Note that $S$ is a subgroup of $G\times G$ (cf.\ \cite[I.4.4, I.1.6]{Bou89Algebra}).
\end{remark}
\begin{lemma}\label{lem:integralSubgroupS}
	Given a connected Banach--Lie group $G$ with Lie algebra $\mathfrak{g}$, let $\mathfrak{l}\unlhd \mathfrak{g}$ be a closed ideal and $L:=\langle \exp_G(\mathfrak{l})\rangle\unlhd G$ the corresponding normal connected integral subgroup. We endow the induced equivalence relation $S:=\{(g_1,g_2)\in G\times G\colon g_1^{-1}g_2\in L\}$ on $G$ with the structure of a smooth Banach manifold making the bijective map $\Phi\colon G\times L \rightarrow S$, $(g,l)\mapsto (g,gl)$ a diffeomorphism. Then $S$ becomes a connected integral subgroup of $G\times G$.
\end{lemma}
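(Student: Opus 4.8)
The plan is to transport the group structure of $S\leq G\times G$ (whose multiplication and inversion are the restrictions of those of $G\times G$, cf.\ Remark~\ref{rem:subgroupS}) to $G\times L$ along the bijection $\Phi$, and to verify that this turns $G\times L$ into a Banach--Lie group. Writing $\Phi(g,l)=(g,gl)$, a short computation shows that, read off through $\Phi$, the multiplication and the inversion of $S$ become the maps
\[
\big((g,l),(g',l')\big)\ \longmapsto\ \big(gg',\,(g'^{-1}lg')\,l'\big)
\qquad\text{and}\qquad
(g,l)\ \longmapsto\ \big(g^{-1},\,(glg^{-1})^{-1}\big)
\]
on $G\times L$. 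Since the group operations and all translations of $G$ and of $L$ are smooth (by the very definitions of Banach--Lie group and of integral subgroup), everything reduces to the smoothness of the conjugation map $\kappa\colon G\times L\rightarrow L$, $(g,l)\mapsto glg^{-1}$; note that $\kappa$ indeed takes values in $L$ because $L\unlhd G$.

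To prove that $\kappa$ is smooth, I would first observe that $\mathfrak l$ being a closed ideal forces $\Ad(g)\mathfrak l=\mathfrak l$ for all $g\in G$: from $\ad(x)\mathfrak l\subseteq\mathfrak l$ and the closedness of $\mathfrak l$ one gets $\Ad(\exp_G x)\mathfrak l=e^{\ad(x)}\mathfrak l\subseteq\mathfrak l$, hence equality by applying this to $\pm x$, and $G$ is generated by $\exp_G(\mathfrak g)$. Therefore $g\mapsto \Ad(g)|_{\mathfrak l}^{\mathfrak l}$ is a smooth map of $G$ into the Banach algebra $\gl(\mathfrak l)$ of continuous endomorphisms of $\mathfrak l$, because $\Ad$ corestricts smoothly to the closed subspace $\{T\in\gl(\mathfrak g)\colon T\mathfrak l\subseteq\mathfrak l\}$ of $\gl(\mathfrak g)$ and the restriction map $T\mapsto T|_{\mathfrak l}^{\mathfrak l}$ into $\gl(\mathfrak l)$ is continuous and linear. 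Using $c_g\circ\exp_G=\exp_G\circ\Ad(g)$ and that $\exp_G|_{\mathfrak l}$ coincides with $\exp_L$, we obtain $\kappa(g,\exp_L x)=\exp_L\big(\Ad(g)|_{\mathfrak l}^{\mathfrak l}(x)\big)$ for $x\in\mathfrak l$; since $\exp_L$ is a local diffeomorphism at $0$ and $(g,x)\mapsto\Ad(g)|_{\mathfrak l}^{\mathfrak l}(x)$ is smooth, $\kappa$ is smooth on $G\times U$ for a suitable identity neighbourhood $U\subseteq L$. As $L$ is connected, every $l_0\in L$ is a finite product $l_0=u_1\cdots u_m$ of elements of $U$, so $\kappa(g,l_0)=\prod_i\kappa(g,u_i)$ exhibits $g\mapsto\kappa(g,l_0)$ as a smooth map $G\rightarrow L$; and near an arbitrary $(g_0,l_0)$ one factors $\kappa(g,l)=\kappa(g,l_0)\cdot\kappa(g,l_0^{-1}l)$, where $(g,l)\mapsto(g,l_0^{-1}l)$ sends a neighbourhood of $(g_0,l_0)$ into $G\times U$. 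Both factors being smooth there, $\kappa$ is smooth.

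With $\kappa$ smooth, the two displayed formulas show that $G\times L$ — hence $S$, via $\Phi$ — is a Banach--Lie group, and $\Phi$ is an isomorphism of Banach--Lie groups. The composite $\iota_S\circ\Phi\colon G\times L\rightarrow G\times G$, $(g,l)\mapsto(g,gl)$, is a smooth homomorphism, being the pair formed by the first projection and the restriction to $G\times L$ of the multiplication of $G$; hence the inclusion $\iota_S\colon S\hookrightarrow G\times G$ is smooth. Moreover $L(\iota_S\circ\Phi)$ is the map $\mathfrak g\times\mathfrak l\rightarrow\mathfrak g\times\mathfrak g$, $(y,x)\mapsto(y,y+x)$, which is a topological embedding: its image $\{(y,z)\colon z-y\in\mathfrak l\}$ is closed and it has continuous inverse $(y,z)\mapsto(y,z-y)$ on this image. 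Composing with the Lie algebra isomorphism $L(\Phi)$ shows that $L(\iota_S)$ is a topological embedding as well. Finally $S\cong G\times L$ is connected, $G$ and $L$ being connected. Thus $S$ is a connected integral subgroup of $G\times G$. The main obstacle is precisely the joint smoothness of $\kappa$: the integral-subgroup topology of $L$ may be strictly finer than the subspace topology inherited from $G$, so one cannot simply restrict and corestrict the (obviously smooth) conjugation of $G\times G$ to $L$, and one is forced to argue in exponential coordinates and exploit the connectedness of $L$.
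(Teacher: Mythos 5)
Your proof is correct and follows essentially the same route as the paper: both reduce everything to the joint smoothness of the conjugation map $G\times L\rightarrow L$, $(g,l)\mapsto glg^{-1}$, establish it locally via the $\Ad$-invariance of $\mathfrak l$ and the identity $c_g\circ\exp_L=\exp_L\circ\Ad_g|_{\mathfrak l}$, and then globalize using translations and the connectedness of $L$, before transporting the group structure along $\Phi$ and checking that $L(\iota_S)$ is a topological embedding. The only differences are organizational (the paper proves the partial maps smooth first and verifies the integral-subgroup property via a commutative diagram with $(g_1,g_2)\mapsto(g_1,g_1g_2)$, whereas you compute $L(\iota_S\circ\Phi)$ directly), and your closing remark correctly identifies the joint smoothness of the conjugation as the real obstacle.
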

\begin{proof}
	It is easy to check that $\Phi$ is a correctly defined map that is bijective with inverse map $\Phi^{-1}\colon S\rightarrow G\times L$, $(g_1,g_2)\mapsto (g_1, g_1^{-1}g_2)$. 
	We organize the proof in three steps.
	
	{\bf Step 1:} \emph{The map $c\colon G\times L\rightarrow L$, $(g,l)\mapsto glg^{-1}$ is smooth.}
	To see this, we firstly show that all partial maps
	$$c_g\colon L\rightarrow L,\ l\mapsto glg^{-1} \quad\text{and}\quad c^l\colon G\rightarrow L,\ g\mapsto glg^{-1}$$
	are smooth. For each $g\in G$, the Lie functor assigns to the conjugation map $C_g\colon G\rightarrow G$, $h\mapsto ghg^{-1}$ the partial map $\Ad_g$ of the adjoint action $\Ad\colon G\times \mathfrak{g} \rightarrow \mathfrak{g}$, so that $C_g\circ \exp_G = \exp_G\circ \Ad_g$. As a closed ideal of $\mathfrak{g}$, the Lie algebra $\mathfrak{l}$ is $\Ad_g$-invariant (cf.\ \cite[p.~26]{Mai62}), so that we obtain $c_g\circ \exp_L = \exp_L \circ \Ad_g|_\mathfrak{l}$, which shows that the homomorphism $c_g$ is smooth. For each $l:=\exp_L(x)\in L$ (with $x\in \mathfrak{l})$, the map $c^l$ is given by $c^l(g)=c_g(\exp_L(x))=\exp_L(\Ad(g,x))$ and is hence smooth. To see that $c^l$ is smooth for all $l\in \langle\im(\exp_L)\rangle = L$, we assume $c^{l_1}$ and $c^{l_2}$ to be smooth for any $l_1,l_2\in L$ and show that $c^{l_1l_2}$ is smooth, but this is clear by observing that $c^{l_1l_2}(g)=gl_1l_2g^{-1}=c^{l_1}(g)c^{l_2}(g)$ for all $g\in G$.
	
	Next, we observe that the map $c$ is locally smooth around $(\eins,\eins)\in G\times L$, since we have $c\circ (\id_G\times \exp_L) = \exp_L\circ\Ad|_{G\times \mathfrak{l}}$. To see that it is globally smooth, take any $(g_0,l_0)\in G\times L$ and observe that
	$$c(g,l) \ =\ glg^{-1} \ =\ (gl_0g^{-1})(gl_0^{-1}lg^{-1}) \ =\ c^{l_0}(g)(c_{g_0}\circ c)(g_0^{-1}g,l_0^{-1}l)$$
	for all $(g,l)\in G\times L$, which shows that $c$ is locally smooth around $(g_0,l_0)$.
	
	{\bf Step 2:} \emph{The group operations in $S$ are smooth.}
	Indeed, the identities
	$$\Phi^{-1}(\Phi(g_1,l_1)\Phi(g_2,l_2)) \ =\ \Phi^{-1}(g_1g_2,g_1l_1g_2l_2) \ =\ (g_1g_2,g_2^{-1}l_1g_2l_2) \ =\ (g_1g_2,c(g_2^{-1},l_1)l_2)$$
	and
	$$\Phi^{-1}((\Phi(g,l))^{-1}) \ =\ \Phi^{-1}(g^{-1},l^{-1}g^{-1}) \ =\ (g^{-1},gl^{-1}g^{-1}) \ =\ (g^{-1},c(g,l^{-1}))$$
	show that multiplication and inversion in $S$ are smooth. Hence, $S$ is a Lie group.
	
	{\bf Step 3:} \emph{The Lie group $S$ is a connected integral subgroup of $G\times G$.} To see this,
	consider the diffeomorphism $\widetilde\Phi\colon G\times G\rightarrow G\times G$, $(g_1,g_2)\mapsto (g_1,g_1g_2)$ and the commutative diagram
	$$\begin{xy}
		\xymatrix{
			G\times L \ar[r]^-{\Phi} \ar@{^{(}->}[d] & S\ar@{^{(}->}[d] \\
			G\times G \ar[r]^-{\widetilde\Phi} & G\times G.
		}
	\end{xy}$$
	We observe that the inclusion maps $S\hookrightarrow G\times G$ and $G\times L\hookrightarrow G\times G$ have the same differentiability properties. Therefore, $S\leq G\times G$ is an integral subgroup, since $L\leq G$ is. Note that $S$ is connected, since $G\times L$ is.
\end{proof}
\begin{lemma} \label{lem:symLieGroup(S,sigma_S)}
	We consider the setting of Lemma~\ref{lem:integralSubgroupS} and let $\sigma$ be an involutive automorphism of $G$ such that $\mathfrak{l}$ is $L(\sigma)$-invariant. Then $S$ is $(\sigma\times\sigma)$-invariant and $(S,\sigma_S)$ with $\sigma_S:=(\sigma\times\sigma)|_S^S$ is a symmetric Lie group.
\end{lemma}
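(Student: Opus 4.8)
The plan is to deduce most of the statement from Lemma~\ref{lem:integralSubspaceOfSymLieGroup} together with the explicit diffeomorphism $\Phi\colon G\times L\to S$ furnished by Lemma~\ref{lem:integralSubgroupS}. First I would apply Lemma~\ref{lem:integralSubspaceOfSymLieGroup} to the symmetric Lie group $(G,\sigma)$ and the closed, $L(\sigma)$-invariant subalgebra $\mathfrak{l}\leq\mathfrak{g}$ (a closed ideal being in particular a closed subalgebra): this yields that $L=\langle\exp_G(\mathfrak{l})\rangle$ is $\sigma$-invariant and that $(L,\sigma_L)$ with $\sigma_L:=\sigma|_L^L$ is a symmetric Lie group with symmetric Lie algebra $(\mathfrak{l},L(\sigma)|_\mathfrak{l}^\mathfrak{l})$; in particular $\sigma_L$ is smooth.

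Next, the $(\sigma\times\sigma)$-invariance of $S$ is immediate from the defining relation: for $(g_1,g_2)\in S$ we have $(\sigma(g_1))^{-1}\sigma(g_2)=\sigma(g_1^{-1}g_2)\in\sigma(L)=L$, hence $(\sigma(g_1),\sigma(g_2))\in S$. Since $\sigma\times\sigma$ is a group homomorphism of $G\times G$ carrying the subgroup $S$ into itself, the map $\sigma_S:=(\sigma\times\sigma)|_S^S$ is a group homomorphism, and it is involutive because $\sigma^2=\id_G$ forces $\sigma_S^2=\id_S$; thus $\sigma_S$ is bijective with $\sigma_S^{-1}=\sigma_S$.

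The one point requiring care is the smoothness of $\sigma_S$: since $S$ is only an integral subgroup of $G\times G$ and not a submanifold, one cannot simply restrict and corestrict the automorphism $\sigma\times\sigma$ of $G\times G$. I would instead transport $\sigma_S$ through $\Phi$. A direct computation gives, for all $(g,l)\in G\times L$,
$$\Phi^{-1}\big(\sigma_S(\Phi(g,l))\big) \ =\ \Phi^{-1}\big(\sigma(g),\sigma(g)\sigma(l)\big) \ =\ \big(\sigma(g),\sigma_L(l)\big),$$
so that $\Phi^{-1}\circ\sigma_S\circ\Phi=\sigma\times\sigma_L$, which is smooth as a product of the smooth maps $\sigma$ and $\sigma_L$. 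Consequently $\sigma_S=\Phi\circ(\sigma\times\sigma_L)\circ\Phi^{-1}$ is smooth, and even a diffeomorphism since $\sigma_S^{-1}=\sigma_S$.

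Putting these observations together, $\sigma_S$ is an involutive automorphism of the Banach--Lie group $S$, so $(S,\sigma_S)$ is a symmetric Lie group, as asserted. I expect the transport step through $\Phi$ to be the only real obstacle; the remaining parts are formal manipulations with the relation $g_1^{-1}g_2\in L$ and the $\sigma$-invariance of $L$ coming from Lemma~\ref{lem:integralSubspaceOfSymLieGroup}.
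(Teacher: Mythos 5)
Your proposal is correct and follows essentially the same route as the paper: it establishes the $(\sigma\times\sigma)$-invariance of $S$ by the same chain of implications, and proves smoothness of $\sigma_S$ by conjugating with the diffeomorphism $\Phi$ to reduce to the smoothness of $\sigma\times\sigma|_L^L$, which Lemma~\ref{lem:integralSubspaceOfSymLieGroup} supplies. The only difference is that you spell out a few formal points (involutivity, bijectivity) that the paper leaves implicit.
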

\begin{proof}
	It is clear that $(\sigma\times\sigma)(S)\subseteq S$, since
	$$(g_1,g_2)\in S \ \Rightarrow\ g_1^{-1}g_2\in L \ \Rightarrow\ \sigma(g_1)^{-1}\sigma(g_2)\in \sigma(L)\subseteq L \ \Rightarrow\ (\sigma(g_1),\sigma(g_2))\in S.$$
	To see that $\sigma_S$ is smooth, we observe that
	$$(\Phi^{-1}\circ\sigma_S\circ\Phi)(g,l) \ =\ \Phi^{-1}(\sigma_S(g,gl)) \ =\ \Phi^{-1}(\sigma(g),\sigma(g)\sigma(l)) \ =\ (\sigma(g),\sigma(l)),$$
	so that it is the same to check the smoothness of $\sigma\times \sigma|_L^L$ on $G\times L$, but this follows by Lemma~\ref{lem:integralSubspaceOfSymLieGroup}.
	Therefore, $(S,\sigma_S)$ is a symmetric Lie group.
\end{proof}
The following proposition gives a bridge to gain normal reflection subspaces from normal subgroups via the quotient map given by a symmetric pair.
\begin{proposition} \label{prop:normalN=q(L)}
	Let $(G,\sigma,K)$ be a symmetric pair with connected $G$ and with quotient map $q\colon G\rightarrow G/K$ and let $(\mathfrak{g},L(\sigma))$ be the symmetric Lie algebra of $(G,\sigma)$ (decomposed as $\mathfrak{g}=\mathfrak{g}_+\oplus\mathfrak{g}_-$). Given an $L(\sigma)$-invariant closed ideal $\mathfrak{l}\unlhd \mathfrak{g}$ and its corresponding $\sigma$-invariant normal integral subgroup $L:=\langle \exp_G(\mathfrak{l})\rangle\unlhd G$ (cf.\ Lemma~\ref{lem:integralSubspaceOfSymLieGroup}), then the closed triple subsystem $\mathfrak{l}_-=\mathfrak{l}\cap\mathfrak{g}_-\leq \mathfrak{g}_-$ is an ideal and its corresponding connected integral subspace $N\leq G/K$ is given by $N=q(L)$. It arises as an equivalence class of the congruence relation $R$ on $G/K$ that is given by
	$$R:=\{(g_1K, g_2K)\in G/K\times G/K\colon g_1^{-1}g_2\in LK\},$$
	so that $N$ is normal by definition.
	Further, $R$ carries a natural structure of a connected integral subspace of $G/K\times G/K$.
\end{proposition}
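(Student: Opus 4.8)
The plan is to deduce everything from the three preparatory lemmas (Lemmas~\ref{lem:imageRofS}, \ref{lem:integralSubgroupS} and~\ref{lem:symLieGroup(S,sigma_S)}), from Example~\ref{ex:integralSubspaceOfHomogeneousSpace}, and from the uniqueness of connected integral subspaces (Corollary~\ref{cor:1-to-1CorrespondenceIntegralSubspaces}). First, $\mathfrak{l}_-=\mathfrak{l}\cap\mathfrak{g}_-$ is a closed subspace, and for $x\in\mathfrak{l}_-$ and $y,z\in\mathfrak{g}_-$ one has $[x,y]\in\mathfrak{l}\cap\mathfrak{g}_+$ (since $\mathfrak{l}\unlhd\mathfrak{g}$ and $[\mathfrak{g}_-,\mathfrak{g}_-]\subseteq\mathfrak{g}_+$), so that $[x,y,z]=[[x,y],z]\in\mathfrak{l}\cap\mathfrak{g}_-=\mathfrak{l}_-$; hence $\mathfrak{l}_-$ is a closed ideal, in particular a closed triple subsystem, of $\mathfrak{g}_-$. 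By Proposition~\ref{prop:integralSubreflectionSpace} its associated connected integral subspace is $N=\langle\Exp_{G/K}(\mathfrak{l}_-)\rangle$, and $G/K$ is connected, $G$ being connected.

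To identify $N$ with $q(L)$, I would use Lemma~\ref{lem:integralSubspaceOfSymLieGroup}: $(L,\sigma_L)$ with $\sigma_L:=\sigma|_L^L$ is a symmetric Lie group with symmetric Lie algebra $(\mathfrak{l},L(\sigma)|_{\mathfrak{l}}^{\mathfrak{l}})$, and the inclusion $\iota_L\colon L\hookrightarrow G$ is an injective morphism of symmetric Lie groups that is an integral subgroup. Example~\ref{ex:integralSubspaceOfHomogeneousSpace}, applied with $K_L:=\iota_L^{-1}(K)=L\cap K$, then shows that $\iota_L\colon(L,\sigma_L,K_L)\rightarrow(G,\sigma,K)$ is a morphism of symmetric pairs and that $\Sym(\iota_L)\colon L/K_L\rightarrow G/K$ is an injective morphism of pointed symmetric spaces which is a connected integral subspace, with Lie triple system the topological embedding $\mathfrak{l}_-\hookrightarrow\mathfrak{g}_-$ by~(\ref{eqn:LtsSym=calLtsL}). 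Since $\Sym(\iota_L)$ is a homomorphism intertwining the exponential maps and $L/K_L$ is connected, hence generated by $\Exp_{L/K_L}(\mathfrak{l}_-)$, its image is
$$q(L)=\Sym(\iota_L)(\langle\Exp_{L/K_L}(\mathfrak{l}_-)\rangle)=\langle\Exp_{G/K}(\mathfrak{l}_-)\rangle=N.$$
Thus $N$ has underlying set $q(L)$, and by the uniqueness of connected integral subspaces the structure transported from $L/K_L$ is the one of Proposition~\ref{prop:integralSubreflectionSpace}.

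It remains to treat $R$. By Lemma~\ref{lem:integralSubgroupS}, $S=\{(g_1,g_2)\in G\times G\colon g_1^{-1}g_2\in L\}$ is a connected integral subgroup of $G\times G$, and by Lemma~\ref{lem:symLieGroup(S,sigma_S)} (using the $L(\sigma)$-invariance of $\mathfrak{l}$), $(S,\sigma_S)$ with $\sigma_S:=(\sigma\times\sigma)|_S^S$ is a symmetric Lie group, so $\iota_S\colon S\hookrightarrow G\times G$ is an injective morphism of symmetric Lie groups. Identifying $(G\times G)/(K\times K)$ with $(G/K)\times(G/K)$ as pointed symmetric spaces in the obvious way, Example~\ref{ex:integralSubspaceOfHomogeneousSpace} applied to $\iota_S$ with $K_S:=\iota_S^{-1}(K\times K)=S\cap(K\times K)$ yields a connected integral subspace $\Sym(\iota_S)\colon S/K_S\rightarrow(G/K)\times(G/K)$ whose image $(q\times q)(S)$ equals $R=\{(g_1K,g_2K)\colon g_1^{-1}g_2\in LK\}$ by Lemma~\ref{lem:imageRofS}. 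Hence $R$ carries a natural structure of a connected integral subspace of $G/K\times G/K$ and is, in particular, a reflection subspace; being also an equivalence relation on $G/K$ (Lemma~\ref{lem:imageRofS}), it is a congruence relation on the connected symmetric space $G/K$. Finally, again by Lemma~\ref{lem:imageRofS}, the $R$-equivalence class of the base point $K$ is $q(L)=N$, so $N\unlhd G/K$ is normal by definition.

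The only points requiring care are essentially bookkeeping: checking via~(\ref{eqn:LtsSym=calLtsL}) that $L/K_L$ and $S/K_S$ carry the expected Lie triple systems, matching the image of $\Sym(\iota_S)$ with the explicit description of $R$ in Lemma~\ref{lem:imageRofS}, and keeping in mind that $q(L)$ a priori only carries the integral subspace structure transported from $L/K_L$, so that the appeal to uniqueness (Corollary~\ref{cor:1-to-1CorrespondenceIntegralSubspaces}, resp.\ Lemma~\ref{lem:uniquenessOfIntegralSubspaces}) is what identifies it with $\langle\Exp_{G/K}(\mathfrak{l}_-)\rangle$. Essentially all the substantive work has been packaged into the preparatory lemmas, so I do not expect a genuine obstacle here.
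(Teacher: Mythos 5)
Your proposal is correct and follows essentially the same route as the paper: the same direct computation showing $\mathfrak{l}_-\unlhd\mathfrak{g}_-$, the same use of Lemma~\ref{lem:integralSubspaceOfSymLieGroup} and Example~\ref{ex:integralSubspaceOfHomogeneousSpace} to realize $N=q(L)$ as the image of $\Sym(\iota_L)\colon L/K_L\to G/K$, and the same treatment of $R$ via the integral subgroup $S$ of $G\times G$ (Lemmas~\ref{lem:integralSubgroupS}, \ref{lem:symLieGroup(S,sigma_S)} and \ref{lem:imageRofS}). Your extra bookkeeping on why $q(L)=\langle\Exp_{G/K}(\mathfrak{l}_-)\rangle$ only makes explicit what the paper leaves implicit.
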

\begin{proof}	 
	By Lemma~\ref{lem:integralSubspaceOfSymLieGroup}, $(L,\sigma_L)$ with $\sigma_L:=\sigma|_L^L$ is a symmetric Lie group with symmetric Lie algebra $(\mathfrak{l},L(\sigma)|_\mathfrak{l}^\mathfrak{l})$.
	As in Example~\ref{ex:integralSubspaceOfHomogeneousSpace}, we turn the inclusion morphism $\iota\colon (L,\sigma_L)\rightarrow (G,\sigma)$ into the morphism $\iota\colon (L,\sigma_L,K_L)\rightarrow (G,\sigma,K)$ of symmetric pairs with $K_L:=K\cap L$ and obtain a connected integral subspace $\Sym(\iota)\colon L/K_L\rightarrow G/K$ with $\im(\Sym(\iota))=q(L)$.
	Its Lie triple system is given by $\mathfrak{l}_-=\mathfrak{l}\cap \mathfrak{g}_-\hookrightarrow \mathfrak{g}_-$, so that $q(L)=N$. We observe that $\mathfrak{l}_-$ is a (closed) ideal in $\mathfrak{g}_-$: Indeed, since $\mathfrak{l}$ is an ideal in $\mathfrak{g}$, we have
	$$[\mathfrak{l}_-,\mathfrak{g}_-,\mathfrak{g}_-] \ =\ [[\mathfrak{l}_-,\mathfrak{g}_-],\mathfrak{g}_-] \ \subseteq\ [\mathfrak{l},\mathfrak{g}_-] \ \subseteq\ \mathfrak{l}$$
	and hence $[\mathfrak{l}_-,\mathfrak{g}_-,\mathfrak{g}_-]\subseteq \mathfrak{l}\cap\mathfrak{g}_- = \mathfrak{l}_-$.
	
	Let $S:=\{(g_1,g_2)\in G\times G\colon g_1^{-1}g_2\in L\}$ be the equivalence relation on $G$ that is induced by $L$. By Lemma~\ref{lem:integralSubgroupS}, $S$ is a connected integral subgroup of $G\times G$. Further, $(S,\sigma_S)$ with $\sigma_S:=(\sigma\times\sigma)|_S$ is a symmetric Lie group by Lemma~\ref{lem:symLieGroup(S,sigma_S)}. 
	
	As in Example~\ref{ex:integralSubspaceOfHomogeneousSpace}, we turn the inclusion morphism $i\colon (S,\sigma_S) \rightarrow (G,\sigma)$ into the morphism $i\colon (S,\sigma_S, K_S) \rightarrow (G\times G,\sigma\times \sigma, K\times K)$ of symmetric pairs with $K_S:=(K\times K)\cap S$ and obtain a connected integral subspace
	$$\Sym(i)\colon S/K_S\rightarrow (G\times G)/(K\times K)\cong G/K \times G/K$$
	whose image $\Sym(i)$ is given by the congruence relation
	$$R=\{(g_1K, g_2K)\in G/K\times G/K\colon g_1^{-1}g_2\in LK\}$$
	on $G/K$ (cf.\ Lemma~\ref{lem:imageRofS}). Note that $N$ is an equivalence class of $R$, since it is the image of the equivalence class $L$ of $S$ under $q$.
\end{proof}
\begin{proposition}[Integral subspaces corresponding to closed ideals]
\label{prop:subreflectionSpaceCorrToIdeal}
	Let $(M,b)$ be a pointed connected symmetric space and $\mathfrak{n}\unlhd \Lts(M,b)$ be a closed ideal. Then the corresponding connected integral subspace $(N,b)\leq (M,b)$ is normal. Its associated congruence relation $R\leq M\times M$ can be endowed with the structure of a connected integral subspace.
\end{proposition}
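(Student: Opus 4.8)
The plan is to imitate the finite-dimensional argument of O.~Loos (cf.\ \cite[p.~131]{Loo69}) by realising $N$ as the image of a normal connected integral subgroup under a quotient map, but using the group $G^\prime(M,b)$ introduced in Section~\ref{sec:supplementHomogeneity} in place of $\Aut(M)$. First I would invoke Proposition~\ref{prop:M=G'(M,b)/G'(M,b)_b} to identify $(M,b)$ with the pointed symmetric space $G/K$ coming from the symmetric pair $(G,\sigma,K):=(G^\prime(M,b),\sigma^\prime,G^\prime(M,b)_b)$, which has connected $G$. The reason for this particular choice is that its symmetric Lie algebra $\mathfrak{g}=\mathfrak{g}_+\oplus\mathfrak{g}_-$ satisfies $\mathfrak{g}_+=\overline{[\mathfrak{g}_-,\mathfrak{g}_-]}$ (cf.\ Definition~\ref{def:G'(M,b)} and Lemma~\ref{lem:[n,n]+n}), with $\mathfrak{g}_-$ identified with $\Lts(M,b)$; under this identification I regard $\mathfrak{n}$ as a closed ideal of the Lie triple system $\mathfrak{g}_-$.

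Next I would set $\mathfrak{l}:=\overline{[\mathfrak{n},\mathfrak{g}_-]}\oplus\mathfrak{n}\leq\mathfrak{g}_+\oplus\mathfrak{g}_-$ and show that $\mathfrak{l}$ is a closed, $L(\sigma)$-invariant ideal of $\mathfrak{g}$ with $\mathfrak{l}\cap\mathfrak{g}_-=\mathfrak{n}$. Closedness and $L(\sigma)$-invariance are immediate from the $\ZZ/2\ZZ$-grading, and $\mathfrak{l}\cap\mathfrak{g}_-=\mathfrak{n}$ holds because $\overline{[\mathfrak{n},\mathfrak{g}_-]}\subseteq\mathfrak{g}_+$ while $\mathfrak{g}_+\cap\mathfrak{g}_-=\{0\}$. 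The ideal property $[\mathfrak{l},\mathfrak{g}]\subseteq\mathfrak{l}$ is the heart of the matter: using $\mathfrak{g}_+=\overline{[\mathfrak{g}_-,\mathfrak{g}_-]}$, the Jacobi identity, continuity of the bracket, and the Lie-triple-system ideal axioms $[[\mathfrak{n},\mathfrak{g}_-],\mathfrak{g}_-]\subseteq\mathfrak{n}$ and $[[\mathfrak{g}_-,\mathfrak{n}],\mathfrak{g}_-]\subseteq\mathfrak{n}$, one first obtains $[\mathfrak{n},\mathfrak{g}_+]\subseteq\mathfrak{n}$ by writing, for $u,v\in\mathfrak{g}_-$ and $n\in\mathfrak{n}$, $[n,[u,v]]=[[n,u],v]-[[n,v],u]\in\mathfrak{n}$ and passing to closures; the remaining relations $[\overline{[\mathfrak{n},\mathfrak{g}_-]},\mathfrak{g}_-]\subseteq\mathfrak{n}$ and $[\overline{[\mathfrak{n},\mathfrak{g}_-]},\mathfrak{g}_+]\subseteq\overline{[\mathfrak{n},\mathfrak{g}_-]}$ then follow by the same bookkeeping, together with the grading $[\mathfrak{g}_+,\mathfrak{g}_+]\subseteq\mathfrak{g}_+$, $[\mathfrak{g}_+,\mathfrak{g}_-]\subseteq\mathfrak{g}_-$. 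I expect this bracket computation to be the only genuine obstacle; it is precisely the place where the passage to $G^\prime(M,b)$ (so that $\mathfrak{g}_+$ is generated by $[\mathfrak{g}_-,\mathfrak{g}_-]$) is indispensable.

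With $\mathfrak{l}$ in hand, I would apply Proposition~\ref{prop:normalN=q(L)} to the symmetric pair $(G,\sigma,K)$ and the $L(\sigma)$-invariant closed ideal $\mathfrak{l}$: putting $L:=\langle\exp_G(\mathfrak{l})\rangle\unlhd G$ (the $\sigma$-invariant normal connected integral subgroup belonging to $\mathfrak{l}$, cf.\ Lemma~\ref{lem:integralSubspaceOfSymLieGroup}), that proposition shows that the connected integral subspace of $G/K$ with Lie triple system $\mathfrak{l}_-=\mathfrak{l}\cap\mathfrak{g}_-=\mathfrak{n}$ is exactly $q(L)$, that it arises as an equivalence class of the congruence relation $R=\{(g_1K,g_2K)\colon g_1^{-1}g_2\in LK\}$ on $G/K$ — hence is normal — and that $R$ itself carries a natural structure of connected integral subspace of $G/K\times G/K$. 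Finally, transporting everything back along the isomorphism $\Phi^\prime\colon G/K\to M$ of Proposition~\ref{prop:M=G'(M,b)/G'(M,b)_b}, and invoking the uniqueness of connected integral subspaces with a prescribed Lie triple system (Corollary~\ref{cor:1-to-1CorrespondenceIntegralSubspaces}), this $q(L)$ is identified with the connected integral subspace $(N,b)\leq(M,b)$ corresponding to $\mathfrak{n}$; it is thereby normal, and its associated congruence relation (the $\Phi^\prime\times\Phi^\prime$-image of $R$) carries the desired structure of a connected integral subspace of $M\times M$. Apart from the bracket identity of the second paragraph, every step is a direct assembly of results already established.
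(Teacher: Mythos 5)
Your proposal is correct and follows essentially the same route as the paper: identify $(M,b)$ with $G^\prime(M,b)/G^\prime(M,b)_b$, lift $\mathfrak{n}$ to the $L(\sigma^\prime)$-invariant closed ideal $\mathfrak{l}=\overline{[\mathfrak{g}_-,\mathfrak{n}]}\oplus\mathfrak{n}$ of $\mathfrak{g}^\prime(M,b)$, and apply Proposition~\ref{prop:normalN=q(L)}. The only difference is that you carry out the Jacobi-identity bookkeeping showing $\mathfrak{l}\unlhd\mathfrak{g}$ explicitly (correctly), where the paper simply cites Loos and Jacobson for this fact.
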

\begin{proof}
	Cf.\ \cite[p.~131]{Loo69} for the finite-dimensional case.
	We can identify $(M,b)$ with the quotient $G^\prime(M,b)/G^\prime(M,b)_b$ of the symmetric pair $(G^\prime(M,b),\sigma^\prime,G^\prime(M,b)_b)$ of Proposition~\ref{prop:M=G'(M,b)/G'(M,b)_b}, where $G^\prime(M,b)$ is connected and has Lie algebra $$\mathfrak{g}^\prime(M,b) \ =\ \mathfrak{g}^\prime(M,b)_+\oplus\mathfrak{g}^\prime(M,b)_- \ =\ \overline{[\mathfrak{g}^\prime(M,b)_-,\mathfrak{g}^\prime(M,b)_-]}\oplus \mathfrak{g}^\prime(M,b)_-$$
	(cf.\ Definition~\ref{def:G'(M,b)}). Then $\mathfrak{n}$ is considered as an ideal of $\mathfrak{g}^\prime(M,b)_-=\Lts(G^\prime(M,b)/G^\prime(M,b)_b)$
	and leads to a $L(\sigma^\prime)$-invariant closed ideal $\mathfrak{l}:=\overline{[\mathfrak{g}^\prime(M,b)_-,\mathfrak{n}]}\oplus\mathfrak{n}\unlhd \mathfrak{g}^\prime(M,b)$ (cf.\ \cite[p.~132]{Loo69} or \cite[Th.~7.1]{Jac51}) with $\mathfrak{n}=\mathfrak{l}_-$. Applying Proposition~\ref{prop:normalN=q(L)} leads to the assertion.
\end{proof}
Given a Banach--Lie group $G$ and a normal Lie subgroup $N\unlhd G$, we know by\linebreak \cite[Th.~II.2]{GN03} that the quotient $G/N$ carries a unique Lie group structure with\linebreak Lie algebra $L(G)/L(N)$ such that the quotient map $\pi\colon G\rightarrow G/N$ is smooth and\linebreak $L(\pi)\colon L(G)\rightarrow L(G)/L(N)$ is the natural quotient map.\footnote{Note that $\pi$ is then a ``weak'' submersion in the sense that its tangent map induces at each $x\in G$ a linear quotient map $T_x\pi$.}
We shall use this fact to proof a comparable theorem for symmetric spaces.
\begin{lemma} \label{lem:g([x])=[g(x)];gInClosureInn(M)} 
	Let $M$ be a connected symmetric space and $R\leq M\times M$ be a congruence relation on $M$ that is a closed subset. Then every automorphism $g$ in the closure $\overline{\Inn(M)}$ of the group $\Inn(M)\leq \Aut(M)$ maps equivalence classes onto equivalence classes.
\end{lemma}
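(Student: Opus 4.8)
The plan is to translate the assertion into the single statement that $(g\times g)(R)=R$ for every $g\in\overline{\Inn(M)}$, where $g\times g$ denotes the automorphism of $M\times M$ induced by $g$. This is equivalent to the claim: since $g$ is bijective, $y\in[x]$ (i.e.\ $(x,y)\in R$) is then equivalent to $(g(x),g(y))\in R$ (i.e.\ $g(y)\in[g(x)]$), and running over all $y$ gives $g([x])=[g(x)]$. First I would recall that the assertion is already established for $g\in\Inn(M)$: as explained at the beginning of Section~\ref{sec:quotients}, for every symmetry $\mu_z$ one has $(x,y)\in R$ if and only if $(\mu_z(x),\mu_z(y))\in R$, and since $\Inn(M)$ is generated by the symmetries this yields $(g\times g)(R)=R$ for all $g\in\Inn(M)$. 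Next I would observe that $\overline{\Inn(M)}$ is itself a subgroup of $\Aut(M)$, because $\Aut(M)$ is a Banach--Lie group, hence a topological group, and the closure of a subgroup of a topological group is a subgroup; in particular $g\in\overline{\Inn(M)}$ implies $g^{-1}\in\overline{\Inn(M)}$.

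The core step is then a limiting argument. Fix $g\in\overline{\Inn(M)}$ and choose a net $(g_i)_{i\in I}$ in $\Inn(M)$ with $g_i\to g$ in $\Aut(M)$ (since $\Aut(M)$ is metrizable, being a first-countable topological group, one may even take a sequence). Let $(x,y)\in R$. For each $i$ the automorphism $g_i$ maps the class $[x]$ onto $[g_i(x)]$, so $(g_i(x),g_i(y))\in R$. By the continuity of the natural action $\tau\colon\Aut(M)\times M\to M$ we get $g_i(x)\to g(x)$ and $g_i(y)\to g(y)$, hence $(g_i(x),g_i(y))\to(g(x),g(y))$ in $M\times M$; since $R$ is closed in $M\times M$ it follows that $(g(x),g(y))\in R$. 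Thus $(g\times g)(R)\subseteq R$. Applying this to $g^{-1}\in\overline{\Inn(M)}$ gives $(g^{-1}\times g^{-1})(R)\subseteq R$, i.e.\ $R\subseteq(g\times g)(R)$, so $(g\times g)(R)=R$, which is what we wanted.

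There is no serious obstacle here; the only point that deserves a line of care is the legitimacy of the passage to the limit — either one invokes metrizability of $\Aut(M)$ so that ordinary sequences suffice, or one phrases the whole argument consistently with nets. The substance reduces entirely to the continuity of the transitive action $\tau$ together with the closedness of $R$, on top of the already-known behaviour of inner automorphisms.
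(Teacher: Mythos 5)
Your proof is correct and follows essentially the same route as the paper: reduce to the one-sided inclusion using that $\overline{\Inn(M)}$ is a group, approximate $g$ by inner automorphisms, and combine the continuity of the natural action of $\Aut(M)$ on $M$ with the closedness of $R$. The paper simply takes a sequence $(g_n)$ in $\Inn(M)$ converging to $g$ (as you note, metrizability of the Banach--Lie group $\Aut(M)$ makes this legitimate), so the two arguments coincide in substance.
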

\begin{proof}
	Since $\overline{\Inn(M)}$ is a group, it suffices to show $(x,y)\in R \Rightarrow (g(x),g(y))\in R$. Let $(g_n)_{n\in\NN}$ be a sequence in $\Inn(M)$ with limit $g\in\overline{\Inn(M)}$. Since the natural action $\Aut(M)\times M\rightarrow M$ is smooth (cf.\ Section~\ref{sec:AutOfConnectedSymSpace}), we have $g(x)=\lim_{n\rightarrow\infty}g_n(x)$ and $g(y)=\lim_{n\rightarrow\infty}g_n(y)$. Therefore we obtain $(g(x),g(y))\in \overline{R}=R$, since $(g_n(x),g_n(y))\in R$ for all $n\in\NN$.
\end{proof}
\begin{lemma}\label{lem:G'-actionOnM/R}
	Let $(M,b)$ be a pointed connected symmetric space and $R\leq M \times M$\linebreak be a congruence relation on $M$ that is a closed subset. Then the natural transitive action\linebreak $G^\prime(M,b)\times M\rightarrow M$, $(g,x)\mapsto g(x)$ (cf.\ Definition~\ref{def:G'(M,b)} and Corollary~\ref{cor:G'(M,b)ActsTransitively}) induces a transitive action $G^\prime(M,b)\times M/R\rightarrow M/R$, $(g,[x])\mapsto [g(x)]$.
\end{lemma}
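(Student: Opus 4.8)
The plan is to reduce the whole statement to Lemma~\ref{lem:g([x])=[g(x)];gInClosureInn(M)}. The first step is to record that $G^\prime(M,b)$ sits inside the closure $\overline{\Inn(M)}$ of $\Inn(M)$ in $\Aut(M)$. Indeed, Remark~\ref{rem:G(M)<G'(M,b)<G(M)closure} gives $G^\prime(M,b)\subseteq\overline{G(M)}$, and the group $G(M)$ of displacements, being generated by the products $\mu_x\mu_y$, is a subgroup of $\Inn(M)$; hence $\overline{G(M)}\subseteq\overline{\Inn(M)}$ and therefore $G^\prime(M,b)\subseteq\overline{\Inn(M)}$.

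The second step is to apply Lemma~\ref{lem:g([x])=[g(x)];gInClosureInn(M)} (which uses exactly the hypothesis that $R$ is closed) to each $g\in G^\prime(M,b)$: every such $g$ maps each equivalence class of $R$ onto an equivalence class, and since $G^\prime(M,b)$ is a group, the same holds for $g^{-1}$, so $g$ permutes the equivalence classes of $R$. Consequently $[x]\mapsto[g(x)]$ is a well-defined bijection of $M/R$, and the assignment $(g,[x])\mapsto[g(x)]$ satisfies the action axioms ($[\eins\cdot x]=[x]$ and $[(gh)(x)]=[g(h(x))]$) because the action of $G^\prime(M,b)$ on $M$ does and the quotient map $M\rightarrow M/R$ is compatible with it.

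The final step is transitivity: given $[x],[y]\in M/R$, transitivity of the action $G^\prime(M,b)\times M\rightarrow M$ (Corollary~\ref{cor:G'(M,b)ActsTransitively}) yields some $g\in G^\prime(M,b)$ with $g(x)=y$, whence $g\cdot[x]=[g(x)]=[y]$.

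I do not expect a genuine obstacle here: the only substantive ingredient is the inclusion $G^\prime(M,b)\subseteq\overline{\Inn(M)}$ feeding into Lemma~\ref{lem:g([x])=[g(x)];gInClosureInn(M)}, and everything else is routine bookkeeping with the definitions of congruence relation and quotient action. The one point to stay alert to is that Lemma~\ref{lem:g([x])=[g(x)];gInClosureInn(M)} really needs both the closedness of $R$ and the continuity of the $\Aut(M)$-action on $M$, both of which are available in the present situation.
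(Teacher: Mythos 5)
Your proposal is correct and follows exactly the paper's own route: the inclusion $G^\prime(M,b)\subseteq\overline{G(M)}\subseteq\overline{\Inn(M)}$ from Remark~\ref{rem:G(M)<G'(M,b)<G(M)closure} feeding into Lemma~\ref{lem:g([x])=[g(x)];gInClosureInn(M)}, with the remaining action axioms and transitivity being routine. The paper's proof is just a terser version of the same argument.
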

\begin{proof}
	This follows immediately by Lemma~\ref{lem:g([x])=[g(x)];gInClosureInn(M)}, since $G^\prime(M,b)\subseteq \overline{G(M)}\subseteq \overline{\Inn(M)}$ (cf.\ Remark~\ref{rem:G(M)<G'(M,b)<G(M)closure}).
\end{proof}
\begin{lemma}\label{lem:ker(psi)+n}
	Let $(\mathfrak{g},\theta)$ be a symmetric Lie algebra with $\mathfrak{g}_+=\overline{[\mathfrak{g}_-,\mathfrak{g}_-]}$. Given a closed ideal $\mathfrak{n}\unlhd \mathfrak{g}_-$, then the representation $\mathfrak{g}_+\rightarrow \gl(\mathfrak{g}_-),\ x\rightarrow [x,\cdot]|_{\mathfrak{g}_-}$ (cf.\ Section~\ref{sec:symLieAlgAndLieGrp}) induces a representation $\psi\colon\mathfrak{g}_+\rightarrow \gl(\mathfrak{g}_-/\mathfrak{n})$.
	The subspace $\mathfrak{l}:=\ker(\psi)\oplus \mathfrak{n}$ of $\mathfrak{g}$ is a $\theta$-invariant closed ideal.
\end{lemma}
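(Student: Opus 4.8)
The plan is, first, to check that the ``linear isotropy'' representation $\mathfrak{g}_+\to\gl(\mathfrak{g}_-)$, $x\mapsto[x,\cdot]|_{\mathfrak{g}_-}$ really descends to $\mathfrak{g}_-/\mathfrak{n}$; second, to identify $\ker(\psi)$ explicitly and dispose of $\theta$-invariance and closedness of $\mathfrak{l}=\ker(\psi)\oplus\mathfrak{n}$; third, to establish the ideal property by splitting everything along the $\ZZ/2$-grading $\mathfrak{g}=\mathfrak{g}_+\oplus\mathfrak{g}_-$ and verifying the resulting inclusions separately.

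The auxiliary fact to record at the outset is $[\mathfrak{g}_+,\mathfrak{n}]\subseteq\mathfrak{n}$. For $u,v\in\mathfrak{g}_-$ and $w\in\mathfrak{n}$ one has $[[u,v],w]=[u,v,w]\in\mathfrak{n}$, since $\mathfrak{n}\unlhd\mathfrak{g}_-$ entails $[\mathfrak{g}_-,\mathfrak{g}_-,\mathfrak{n}]\subseteq\mathfrak{n}$ (cf.\ Section~\ref{sec:LtsAndSymSpace}); hence $[[\mathfrak{g}_-,\mathfrak{g}_-],\mathfrak{n}]\subseteq\mathfrak{n}$, and as the Lie bracket is continuous and $\mathfrak{n}$ is closed, passing to closures gives $[\mathfrak{g}_+,\mathfrak{n}]=\big[\overline{[\mathfrak{g}_-,\mathfrak{g}_-]},\mathfrak{n}\big]\subseteq\mathfrak{n}$ (the same device as in the proof of Lemma~\ref{lem:[n,n]+n}). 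Consequently each operator $[x,\cdot]|_{\mathfrak{g}_-}$ with $x\in\mathfrak{g}_+$ leaves $\mathfrak{n}$ invariant and therefore induces $\psi(x)\in\gl(\mathfrak{g}_-/\mathfrak{n})$; that $\psi$ is a continuous Lie algebra representation is then immediate, being obtained from the continuous representation $\mathfrak{g}_+\to\gl(\mathfrak{g}_-)$ (cf.\ Section~\ref{sec:symLieAlgAndLieGrp}) by postcomposition with the quotient map $\mathfrak{g}_-\to\mathfrak{g}_-/\mathfrak{n}$. Explicitly, $\ker(\psi)=\{x\in\mathfrak{g}_+\colon[x,\mathfrak{g}_-]\subseteq\mathfrak{n}\}=\bigcap_{y\in\mathfrak{g}_-}\{x\in\mathfrak{g}_+\colon[x,y]\in\mathfrak{n}\}$, which is closed as an intersection of preimages of the closed set $\mathfrak{n}$ under the continuous maps $x\mapsto[x,y]$.

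Since $\ker(\psi)\subseteq\mathfrak{g}_+$ and $\mathfrak{n}\subseteq\mathfrak{g}_-$, the subspace $\mathfrak{l}=\ker(\psi)\oplus\mathfrak{n}$ is a topological direct sum of two closed subspaces inside the Banach direct sum $\mathfrak{g}_+\oplus\mathfrak{g}_-$, hence closed, and it is manifestly $\theta$-invariant with $\mathfrak{l}_+=\ker(\psi)$ and $\mathfrak{l}_-=\mathfrak{n}$. It remains to show $[\mathfrak{l},\mathfrak{g}]\subseteq\mathfrak{l}$. Using $[\mathfrak{g}_+,\mathfrak{g}_+]\subseteq\mathfrak{g}_+$, $[\mathfrak{g}_+,\mathfrak{g}_-]\subseteq\mathfrak{g}_-$ and $[\mathfrak{g}_-,\mathfrak{g}_-]\subseteq\mathfrak{g}_+$, this reduces to the four inclusions $[\ker(\psi),\mathfrak{g}_-]\subseteq\mathfrak{n}$, $[\mathfrak{n},\mathfrak{g}_+]\subseteq\mathfrak{n}$, $[\mathfrak{n},\mathfrak{g}_-]\subseteq\ker(\psi)$ and $[\ker(\psi),\mathfrak{g}_+]\subseteq\ker(\psi)$. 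The first is the definition of $\ker(\psi)$; the second is the auxiliary fact above. For the third, given $w\in\mathfrak{n}$ and $y,z\in\mathfrak{g}_-$ one has $[[w,y],z]=[w,y,z]\in\mathfrak{n}$ because $\mathfrak{n}\unlhd\mathfrak{g}_-$, and since $z\in\mathfrak{g}_-$ was arbitrary this says $[w,y]\in\ker(\psi)$. For the fourth, given $x\in\ker(\psi)$, $y\in\mathfrak{g}_+$, $z\in\mathfrak{g}_-$, the Jacobi identity gives $[[x,y],z]=[x,[y,z]]-[y,[x,z]]$, where $[y,z]\in\mathfrak{g}_-$ forces $[x,[y,z]]\in\mathfrak{n}$, and $[x,z]\in\mathfrak{n}$ forces $[y,[x,z]]\in[\mathfrak{g}_+,\mathfrak{n}]\subseteq\mathfrak{n}$; hence $[[x,y],z]\in\mathfrak{n}$, i.e.\ $[x,y]\in\ker(\psi)$. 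Together these give $[\mathfrak{l},\mathfrak{g}]\subseteq\mathfrak{l}$, so $\mathfrak{l}\unlhd\mathfrak{g}$.

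I expect no serious obstacle. The grading identities and the passage to closures are routine (the latter costs only continuity of the Lie bracket), and the sole content-bearing points are the last two inclusions: that $\ker(\psi)$ absorbs brackets with \emph{all} of $\mathfrak{g}_+$, not merely with itself, and that $[\mathfrak{n},\mathfrak{g}_-]$ lands back in $\ker(\psi)$. Both of these rest on the Jacobi identity funnelled through the hypothesis $\mathfrak{g}_+=\overline{[\mathfrak{g}_-,\mathfrak{g}_-]}$ (used via $[\mathfrak{g}_+,\mathfrak{n}]\subseteq\mathfrak{n}$) together with the triple-ideal property of $\mathfrak{n}$, so the argument is essentially the same as in the finite-dimensional source.
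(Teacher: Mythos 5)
Your proposal is correct and follows essentially the same route as the paper: establish $[\mathfrak{g}_+,\mathfrak{n}]=[\overline{[\mathfrak{g}_-,\mathfrak{g}_-]},\mathfrak{n}]\subseteq\mathfrak{n}$ to obtain $\psi$, deduce $[\mathfrak{g}_-,\mathfrak{n}]\subseteq\ker(\psi)$ from $[[\mathfrak{g}_-,\mathfrak{n}],\mathfrak{g}_-]\subseteq\mathfrak{n}$, and reduce the ideal property to the four graded inclusions. The only difference is that you spell out the two steps the paper leaves to the reader (that $\ker(\psi)$ is an ideal of $\mathfrak{g}_+$, via the Jacobi identity, and the closedness of $\mathfrak{l}$), which is harmless.
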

\begin{proof}
	We obtain the representation $\psi$ by observing that $[\mathfrak{g}_+,\mathfrak{n}]\subseteq[\overline{[\mathfrak{g}_-,\mathfrak{g}_-],\mathfrak{n}]}\subseteq \mathfrak{n}$.
	From $[[\mathfrak{g}_-,\mathfrak{n}],\mathfrak{g}_-]\subseteq \mathfrak{n}$ we deduce that $[\mathfrak{g}_-,\mathfrak{n}]\subseteq \ker(\psi)$. Thus, to see that $\mathfrak{l}$ is an ideal of $\mathfrak{g}$, it remains to point out that $\ker(\psi)$ is an ideal of $\mathfrak{g}_+$ and that $[\ker(\psi),\mathfrak{g}_-]\subseteq \mathfrak{n}$.
\end{proof}
\begin{lemma} \label{lem:lieSubgroupL}
	Let $(G,\sigma,K)$ be a symmetric pair with connected $G$ and with quotient map $q\colon G\rightarrow G/K$ and let $(\mathfrak{g},\theta)$ be the symmetric Lie algebra of $(G,\sigma)$. Further assume that $\mathfrak{g}_+=\overline{[\mathfrak{g}_-,\mathfrak{g}_-]}$.
	Given a closed ideal $\mathfrak{n}\unlhd \mathfrak{g}_-$ for which the integral subspace $N:=\langle\Exp_{G/K}(\mathfrak{n})\rangle \unlhd G/K$ is a symmetric subspace, we consider the closed ideal $\mathfrak{l}:=\ker(\psi)\oplus\mathfrak{n}$ of $\mathfrak{g}$ (cf.\ Lemma~\ref{lem:ker(psi)+n}). Its corresponding $\sigma$-invariant normal integral subgroup $L:=\langle\exp_G(\mathfrak{l})\rangle$ of $G$ (cf.\  Lemma~\ref{lem:integralSubspaceOfSymLieGroup}) is a Lie subgroup.
\end{lemma}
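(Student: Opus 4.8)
The plan is to prove that $L$ is a \emph{local submanifold of $G$ at the identity} $\eins$, using $\mathfrak{l}\leq\mathfrak{g}$ as its model subspace; since $L$ is a subgroup, left translation by any $g\in L$ is a diffeomorphism of $G$ carrying $L$ onto $L$, so translating an adapted chart at $\eins$ shows $L$ is a local submanifold at every point of $L$, hence a submanifold of $G$ and therefore a Lie subgroup. To build the adapted chart I would use the decomposition $\mathfrak{g}=\mathfrak{g}_+\oplus\mathfrak{g}_-$ and the map $\Psi(x_-,x_+):=\exp_G(x_-)\exp_G(x_+)$, whose differential at $(0,0)$ is the canonical isomorphism $\mathfrak{g}_-\times\mathfrak{g}_+\to\mathfrak{g}$; hence $\Psi$ restricts to a diffeomorphism of a product $0$-neighbourhood $V_-\times V_+\subseteq\mathfrak{g}_-\times\mathfrak{g}_+$ onto an open identity neighbourhood $U\subseteq G$, and after shrinking $V_+$ we may assume $\exp_G(V_+)\subseteq K$, so that $q\circ\Psi(x_-,x_+)=\Exp_{G/K}(x_-)$ by (\ref{eqn:Exp=q exp}). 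The goal is then to show that $\Psi^{-1}(L\cap U)=(V_-\cap\mathfrak{n})\times(V_+\cap\ker\psi)$, which under the identification $\mathfrak{g}_-\times\mathfrak{g}_+\cong\mathfrak{g}$ is exactly $(V_-\times V_+)\cap\mathfrak{l}$, $\mathfrak{l}=\mathfrak{n}\oplus\ker\psi$ being a closed subspace of $\mathfrak{g}$.

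I would split this into two claims. \emph{Claim 1:} for $(x_-,x_+)\in V_-\times V_+$ one has $\Psi(x_-,x_+)\in L$ if and only if $x_-\in\mathfrak{n}$ and $\exp_G(x_+)\in L$. The implication ``$\Leftarrow$'' is immediate because $\mathfrak{n}\subseteq\mathfrak{l}$ forces $\exp_G(x_-)\in L$. For ``$\Rightarrow$'' I would first invoke Proposition~\ref{prop:normalN=q(L)}, applied to the closed ideal $\mathfrak{l}\unlhd\mathfrak{g}$: since $\mathfrak{l}\cap\mathfrak{g}_-=\mathfrak{n}$, the connected integral subspace associated with $\mathfrak{l}\cap\mathfrak{g}_-$ is precisely $N=\langle\Exp_{G/K}(\mathfrak{n})\rangle$, whence $q(L)=N$. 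Then $\Exp_{G/K}(x_-)=q(\Psi(x_-,x_+))\in N$; applying Proposition~\ref{prop:expChartOfSubsymSpace} to the symmetric subspace $N\leq G/K$ with Lie triple system $\mathfrak{n}$ and shrinking $V_-$ so that $\Exp_{G/K}|_{V_-}$ is injective and $\Exp_{G/K}(V_-\cap\mathfrak{n})=\Exp_{G/K}(V_-)\cap N$, injectivity forces $x_-\in\mathfrak{n}$; hence $\exp_G(x_-)\in L$ and so $\exp_G(x_+)=\exp_G(x_-)^{-1}\Psi(x_-,x_+)\in L$ as well.

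\emph{Claim 2} is the heart of the matter, and the step I expect to be the main obstacle: for $x_+\in V_+$ one has $\exp_G(x_+)\in L$ if and only if $x_+\in\ker\psi$ (again ``$\Leftarrow$'' is trivial). Here I would use crucially that $\mathfrak{l}$ is an \emph{ideal} of $\mathfrak{g}$ (Lemma~\ref{lem:ker(psi)+n}): then $\ad_x(\mathfrak{g})\subseteq\mathfrak{l}$ for every $x\in\mathfrak{l}$, so $\Ad_{\exp_G(x)}=e^{\ad_x}$ induces the identity on $\mathfrak{g}/\mathfrak{l}$, and by multiplicativity of $\Ad$ the same holds for every $g\in L=\langle\exp_G(\mathfrak{l})\rangle$. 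Since $x_+\in\mathfrak{g}_+$, the operator $\Ad_{\exp_G(x_+)}=e^{\ad_{x_+}}$ leaves $\mathfrak{g}_-$ invariant and, by $[\mathfrak{g}_+,\mathfrak{n}]\subseteq\mathfrak{n}$ (Lemma~\ref{lem:ker(psi)+n}), also leaves $\mathfrak{n}$ invariant; under the natural topological isomorphism $\mathfrak{g}/\mathfrak{l}\cong(\mathfrak{g}_+/\ker\psi)\times(\mathfrak{g}_-/\mathfrak{n})$ its second component is the operator $e^{\psi(x_+)}$ induced on $\mathfrak{g}_-/\mathfrak{n}$. Triviality of the $\Ad$-action on $\mathfrak{g}/\mathfrak{l}$ therefore yields $e^{\psi(x_+)}=\id_{\mathfrak{g}_-/\mathfrak{n}}$; since $\psi$ is continuous and the exponential map $\gl(\mathfrak{g}_-/\mathfrak{n})\to\GL(\mathfrak{g}_-/\mathfrak{n})$ is injective on a $0$-neighbourhood, a further shrinking of $V_+$ (so that $\psi(V_+)$ stays inside that neighbourhood) forces $\psi(x_+)=0$, i.e.\ $x_+\in\ker\psi$. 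Combining Claims 1 and 2 gives $\Psi^{-1}(L\cap U)=(V_-\cap\mathfrak{n})\times(V_+\cap\ker\psi)=(V_-\times V_+)\cap\mathfrak{l}$, so $L$ is a local submanifold at $\eins$, and the homogeneity argument from the first paragraph finishes the proof.
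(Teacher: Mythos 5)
Your proof is correct and takes essentially the same route as the paper: the same product decomposition $\exp_G(\text{$\mathfrak{g}_-$-part})\exp_G(\text{$\mathfrak{g}_+$-part})$, the same use of $q(L)=N$ together with the adapted exponential chart of the symmetric subspace $N$ (Propositions~\ref{prop:normalN=q(L)} and~\ref{prop:expChartOfSubsymSpace}) to force the $\mathfrak{g}_-$-component into $\mathfrak{n}$, and the same reduction of the $\mathfrak{g}_+$-component via the induced representation on $\mathfrak{g}_-/\mathfrak{n}$. The only cosmetic difference is that the paper handles your Claim~2 through the group-level representation $\Psi\colon G^\sigma\rightarrow\GL(\mathfrak{g}_-/\mathfrak{n})$, whose kernel is a Lie subgroup of $G^\sigma$ with Lie algebra $\ker(\psi)$, whereas you verify the corresponding local statement directly via $e^{\psi(x_+)}=\id$ and injectivity of the operator exponential near $0$.
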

\begin{proof}
	The adjoint representation $\Ad\colon G\rightarrow \GL(\mathfrak{g})$, $g\mapsto L(c_g)$ (with conjugation map $c_g\colon G\rightarrow G$, $h\mapsto ghg^{-1}$) induces a representation $G^\sigma\rightarrow \GL(\mathfrak{g}_-)$, $g \mapsto L(c_g)|_{\mathfrak{g}_-}$. Indeed, for each $g\in G^\sigma$, we have $\sigma\circ c_g = c_g\circ \sigma$, so that $L(c_g)$ is an automorphism of the symmetric Lie algebra $(\mathfrak{g},\theta)$, which leads to $\calLts(L(c_g))=L(c_g)|_{\mathfrak{g}_-}\in\GL(\mathfrak{g}_-)$.
	Since $\mathfrak{l}$ is an ideal of $\mathfrak{g}$, we have $\Ad(G)(\mathfrak{l})\subseteq \mathfrak{l}$ and hence $\Ad(G^\sigma)(\mathfrak{n})\subseteq \mathfrak{l}\cap\mathfrak{g}_-=\mathfrak{n}$. Thus, we obtain a representation $\Psi\colon G^\sigma\rightarrow \GL(\mathfrak{g}_-/\mathfrak{n})$. It is easy to check that its derived representation $L(\Psi)$ is given by $\psi$, since $L(\Ad)=\ad\colon\mathfrak{g}\rightarrow\gl(\mathfrak{g})$, $x\mapsto [x,\cdot]$. Being the kernel of a representation, $\ker(\Psi)$ is a Lie subgroup of $G^\sigma$ with Lie algebra $\ker(\psi)$.
	We now organize the proof in two steps.
	
	{\bf Step 1:} \emph{Observe that $L\cap G^\sigma \subseteq \ker(\Psi)$.} Since $\mathfrak{l}$ is a closed ideal of $\mathfrak{g}$, we have $\Ad(G)(\mathfrak{l})\subseteq\mathfrak{l}$,\linebreak so that the adjoint representation $\Ad$ induces a representation $\Ad_{\mathfrak{g}/\mathfrak{l}}\colon G\rightarrow \GL(\mathfrak{g}/\mathfrak{l})$. Its derived representation $\ad_{\mathfrak{g}/\mathfrak{l}}:=L(\Ad_{\mathfrak{g}/\mathfrak{l}})\colon \mathfrak{g}\rightarrow \gl(\mathfrak{g}/\mathfrak{l})$ is induced by $\ad\colon \mathfrak{g}\rightarrow\gl(\mathfrak{g})$.
	From $[\mathfrak{l},\mathfrak{g}]\subseteq\mathfrak{l}$ we deduce that $\mathfrak{l}\subseteq\ker(\ad_{\mathfrak{g}/\mathfrak{l}})$, so that the integral subgroup $L\unlhd G$ lies in $\ker(\Ad_{\mathfrak{g}/\mathfrak{l}})$. The ideal $\mathfrak{l}\unlhd\mathfrak{g}$ being $\theta$-invariant, the quotient $\mathfrak{g}/\mathfrak{l}$ decomposes as $\mathfrak{g}/\mathfrak{l}=\mathfrak{g}_+/\mathfrak{l}_+\oplus \mathfrak{g}_-/\mathfrak{l}_-$. Therefore, $\Ad_{\mathfrak{g}/\mathfrak{l}}$ induces the representation $\Psi$, so that $L\cap G^\sigma$ lies in the kernel of $\Psi$.
	
	{\bf Step 2:} \emph{Observe that $L$ is a Lie subgroup of $G$.} Since $\ker(\Psi)$ is a Lie subgroup of $G^\sigma$, there exists an open 0-neighborhood $V_+\subseteq \mathfrak{g}_+$ such that $\exp_G|_{V_+}$ is a diffeomorphism onto an open subset of $G^\sigma$ and
	\begin{equation}\label{eqn:lieSubgroupL_eqn1}
		\exp_G(V_+\cap\ker(\psi)) \ =\ \exp_G(V_+)\cap \ker(\Psi).
	\end{equation}
	Similarly, since $N$ is a symmetric subspace of $G/K$, there exists an open 0-neighborhood\linebreak $V_-\subseteq \mathfrak{g}_-$ such that $\Exp_{G/K}$ is a diffeomorphism onto an open subset of $G/K$ and
	\begin{equation}\label{eqn:lieSubgroupL_eqn2}
		\Exp_{G/K}(V_-\cap\mathfrak{n})=\Exp_{G/K}(V_-)\cap N.
	\end{equation}
	(cf.\ Proposition~\ref{prop:expChartOfSubsymSpace}). By the Inverse Mapping Theorem, we can shrink $V_+$ and $V_-$ such that the map
	$$\Phi\colon \mathfrak{g}_+\oplus\mathfrak{g}_- \rightarrow G,\ x\oplus y\mapsto \exp_G(y)\exp_G(x)$$
	restricts to a diffeomorphism $\Phi|_V$ (with $V:=V_+\oplus V_-\subseteq \mathfrak{g}$) onto an open subset of $G$. To see that $L$ is a Lie subgroup of $G$, it suffices to show that $\Phi(V\cap\mathfrak{l}) = \Phi(V)\cap L$. The inclusion $\Phi(V\cap\mathfrak{l})\subseteq \Phi(V)\cap L$ is clear. To see the converse inclusion, take any $x\oplus y\in V_+\oplus V_-$ with $\Phi(x\oplus y)\in L$ and show that $x\oplus y \in \ker(\psi)\oplus \mathfrak{n} = \mathfrak{l}$.
	Because of $\exp_G(x)\in (G^\sigma)_0\subseteq K$, we have $q(\Phi(x\oplus y))=q(\exp_G(y))=\Exp_{G/K}(y)$. Since we know by Proposition~\ref{prop:normalN=q(L)} that $q(L)=N$, we deduce that $\Exp_{G/K}(y)\in N$ and hence that $y\in\mathfrak{n}$ by (\ref{eqn:lieSubgroupL_eqn2}). It follows that $\exp_G(x)\in L\cap G^\sigma$, since we have $\exp_G(x)=\exp_G(y)^{-1}\Phi(x\oplus y)\in L$. By Step~1 and (\ref{eqn:lieSubgroupL_eqn1}), we obtain that $x\in\ker(\psi)$.
\end{proof}
\begin{lemma} \label{lem:G/L}
	Considering the setting of Lemma~\ref{lem:lieSubgroupL}, the involutive automorphism $\sigma$ of $G$ induces an involutive automorphism $\sigma_{G/L}$ of the quotient Lie group $G/L$ and we obtain a symmetric pair $(G/L,\sigma_{G/L},\pi(K))$, where $\pi\colon G\rightarrow G/L$ denotes the quotient map.
\end{lemma}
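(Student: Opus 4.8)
The plan is to first form the quotient Banach--Lie group, descend the involution, and then identify the fixed-point group. By Lemma~\ref{lem:lieSubgroupL}, $L$ is a normal Lie subgroup of $G$, so by \cite[Th.~II.2]{GN03} the quotient $G/L$ carries a unique Banach--Lie group structure with Lie algebra $\mathfrak{g}/\mathfrak{l}$ for which $\pi\colon G\rightarrow G/L$ is a smooth (``weak'') submersion and $L(\pi)\colon\mathfrak{g}\rightarrow\mathfrak{g}/\mathfrak{l}$ is the natural quotient map. Since $L$ is $\sigma$-invariant and $\sigma$ is involutive, we have $\sigma(L)=L$, so that $\sigma$ induces a bijective group homomorphism $\sigma_{G/L}\colon G/L\rightarrow G/L$, $gL\mapsto\sigma(g)L$, characterized by $\sigma_{G/L}\circ\pi=\pi\circ\sigma$, and $\sigma_{G/L}^2=\id_{G/L}$ follows from $\sigma^2=\id_G$ together with the surjectivity of $\pi$. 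To see that $\sigma_{G/L}$ is smooth, I would either invoke the universal property of the Banach--Lie quotient $G/L$ (the smooth homomorphism $\pi\circ\sigma$ vanishes on $L$, hence factors through $\pi$), or observe that $\sigma_{G/L}$ is continuous---as $\pi$ is a topological quotient map and $\pi^{-1}(\sigma_{G/L}^{-1}(U))=\sigma^{-1}(\pi^{-1}(U))$ is open for every open $U\subseteq G/L$---and then use the automatic smoothness of continuous homomorphisms of Banach--Lie groups. Thus $(G/L,\sigma_{G/L})$ is a symmetric Lie group. Applying $L$ to $\sigma_{G/L}\circ\pi=\pi\circ\sigma$ shows that $L(\sigma_{G/L})$ is the involution of $\mathfrak{g}/\mathfrak{l}$ induced by $\theta$ on the $\theta$-invariant quotient, so that $(\mathfrak{g}/\mathfrak{l})_+=L(\pi)(\mathfrak{g}_+)=\mathfrak{g}_+/\mathfrak{l}_+$ with $\mathfrak{l}_+=\ker(\psi)$.

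It then remains to check that $(G/L,\sigma_{G/L},\pi(K))$ is a symmetric pair, i.e., that $\pi(K)$ is an open subgroup of the fixed-point group $(G/L)^{\sigma_{G/L}}$. Clearly $\pi(K)$ is a subgroup, and it is contained in $(G/L)^{\sigma_{G/L}}$ since for each $k\in K$ we have $\sigma_{G/L}(\pi(k))=\pi(\sigma(k))=\pi(k)$. By the facts recalled in Section~\ref{sec:symLieAlgAndLieGrp}, $(G/L)^{\sigma_{G/L}}$ is a (split) Lie subgroup of $G/L$ with Lie algebra $(\mathfrak{g}/\mathfrak{l})_+$; hence its identity component is open in it and is generated by $\exp_{G/L}((\mathfrak{g}/\mathfrak{l})_+)$. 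Using $\exp_{G/L}\circ L(\pi)=\pi\circ\exp_G$ and $L(\pi)(\mathfrak{g}_+)=(\mathfrak{g}/\mathfrak{l})_+$, we obtain
$$\exp_{G/L}\big((\mathfrak{g}/\mathfrak{l})_+\big) \ =\ \pi\big(\exp_G(\mathfrak{g}_+)\big) \ \subseteq\ \pi\big((G^\sigma)_0\big) \ \subseteq\ \pi(K),$$
because $\exp_G(\mathfrak{g}_+)\subseteq(G^\sigma)_0\subseteq K$. Since $\pi(K)$ is a subgroup, it therefore contains $((G/L)^{\sigma_{G/L}})_0$, so that $((G/L)^{\sigma_{G/L}})_0\subseteq\pi(K)\subseteq(G/L)^{\sigma_{G/L}}$, which means that $\pi(K)$ is an open subgroup of $(G/L)^{\sigma_{G/L}}$. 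Hence $(G/L,\sigma_{G/L},\pi(K))$ is a symmetric pair.

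The step I expect to require the most care is the smoothness of $\sigma_{G/L}$, since it rests on the quotient map $\pi$ being well-behaved enough (its universal property, or equivalently the automatic smoothness of continuous homomorphisms of Banach--Lie groups); once that is available, the involutivity of $\sigma_{G/L}$, the identification $(\mathfrak{g}/\mathfrak{l})_+=\mathfrak{g}_+/\ker(\psi)$, and the computation of $((G/L)^{\sigma_{G/L}})_0$ via the intertwining of the exponential maps are all routine.
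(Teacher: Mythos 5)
Your proposal is correct and follows essentially the same route as the paper: descend $\sigma$ to the abstract quotient group, establish smoothness via the compatibility of $\sigma_{G/L}$ with $\exp_{G/L}$ and the induced involution $\theta_{\mathfrak{g}/\mathfrak{l}}$ (the paper does exactly your exponential-intertwining computation to get local, hence global, smoothness), and then deduce $((G/L)^{\sigma_{G/L}})_0\subseteq\pi(K)$ from $L(\pi)(\mathfrak{g}_+)=(\mathfrak{g}/\mathfrak{l})_+$ and $(G^\sigma)_0\subseteq K$, which is the paper's appeal to the fact that $f((G_1^{\sigma_1})_0)=(G_2^{\sigma_2})_0$ when $L(f)((\mathfrak{g}_1)_+)=(\mathfrak{g}_2)_+$. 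The minor variations you offer for the smoothness step (universal property of the quotient, or continuity plus automatic smoothness) are equally valid.
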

\begin{proof}
	Since $L\unlhd G$ is $\sigma$-invariant, there is induced an involutive automorphism $\sigma_{G/L}$ of the abstract group $G/L$. To see that it is smooth, we show that $\sigma_{G/L}\circ \exp_{G/L} = \exp_{G/L}\circ \theta_{\mathfrak{g}/\mathfrak{l}}$, where $\theta_{\mathfrak{g}/\mathfrak{l}}$ is the involutive automorphism of $\mathfrak{g}/\mathfrak{l}$ that is induced by $\theta$. For this, we recall that $L(\pi)\colon \mathfrak{g}\rightarrow \mathfrak{g}/\mathfrak{l}$ is the natural quotient map, so that it suffices to observe that
	\begin{eqnarray*}
		\sigma_{G/L}\circ \exp_{G/L}\circ L(\pi) &=& \sigma_{G/L}\circ \pi\circ \exp_G \ =\ \pi\circ\sigma\circ \exp_G \ =\ \pi\circ \exp_G\circ \theta \\
		&=& \exp_{G/L}\circ L(\pi)\circ\theta \ =\ \exp_{G/L}\circ \theta_{\mathfrak{g}/\mathfrak{l}}\circ L(\pi).
	\end{eqnarray*}
	It is clear that $\pi(K)\subseteq (G/L)^{\sigma_{G/L}}$, since we have $\sigma_{G/L}(\pi(K))=\pi(\sigma(K))\subseteq \pi(K)$. It remains to show the inclusion $((G/L)^{\sigma_{G/L}})_0\subseteq \pi(K)$. For this it suffices, with regard to $(G^\sigma)_0\subseteq K$, to point out that $L(\pi)(\mathfrak{g}_+)=(\mathfrak{g}/\mathfrak{l})_+$ leads to $\pi((G^\sigma)_0)=((G/L)^{\sigma_{G/L}})_0$ (cf.\ p.~\pageref{page:L(f)(g1_+)=g2_+}).
\end{proof}
\begin{theorem}[Quotient Theorem] \label{th:quotientTheorem}
	Let $M$ be a connected symmetric space, $N\unlhd M$ be a closed connected normal reflection subspace and $R\leq M\times M$ be its associated congruence relation on $M$. Then the following are equivalent:
	\begin{enumerate} 
		\item[\rm (a)] There exists a morphism $f\colon M\rightarrow M^\prime$ to a symmetric space $M^\prime$ such that $R$ is its kernel relation.
		\item[\rm (b)] The quotient reflection space $M/R$ can be made a symmetric space such that the natural quotient map $\pi\colon M\rightarrow M/R$ is a ``weak'' submersion in the sense that it is smooth and its tangent map induces at each $x\in M$ a linear quotient map $\Lts_x(\pi)$.
		\item[\rm (c)] $N$ is a symmetric subspace of $M$.
	\end{enumerate}
\end{theorem}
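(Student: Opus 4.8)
The plan is to prove the cycle of implications $\mathrm{(b)}\Rightarrow\mathrm{(a)}\Rightarrow\mathrm{(c)}\Rightarrow\mathrm{(b)}$, where the first two are short and the substance lies in the last. For $\mathrm{(b)}\Rightarrow\mathrm{(a)}$ one takes $M^\prime:=M/R$ with the symmetric space structure furnished by (b) and $f:=\pi$: by construction of the quotient reflection space $\pi$ is a homomorphism, it is smooth by (b), hence a morphism, and its kernel relation is plainly $R$. For $\mathrm{(a)}\Rightarrow\mathrm{(c)}$ one fixes $b\in N$, sets $b^\prime:=f(b)$ and observes that $N$, being the $R$-class of $b$, coincides with $f^{-1}(b^\prime)=\ker\bigl(f\colon(M,b)\to(M^\prime,b^\prime)\bigr)$, which is a closed symmetric subspace by Corollary~\ref{cor:kernelOfMorphismOfPointedSymSpaces}.

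For $\mathrm{(c)}\Rightarrow\mathrm{(b)}$ I would fix $b\in N$ and use Proposition~\ref{prop:M=G'(M,b)/G'(M,b)_b} to identify $(M,b)$ with $(G/K,K)$, where $(G,\sigma,K):=(G^\prime(M,b),\sigma^\prime,G^\prime(M,b)_b)$; here $G$ is connected and its Lie algebra $\mathfrak g=\mathfrak g_+\oplus\mathfrak g_-$ satisfies $\mathfrak g_+=\overline{[\mathfrak g_-,\mathfrak g_-]}$ (Definition~\ref{def:G'(M,b)}). Transporting $N$ and $R$ along this isomorphism, Proposition~\ref{prop:LtsOfClosedNormalReflectionSubspace} shows that $\mathfrak n:=\Lts(N,b)=\{x\in\mathfrak g_-\colon \Exp_{G/K}(\RR x)\subseteq N\}$ (the two descriptions agree by Proposition~\ref{prop:LtsOfSubsymspace}, $N$ being a symmetric subspace by (c)) is a closed ideal of $\mathfrak g_-$, and, since $N$ is connected, $N=\langle\Exp_{G/K}(\mathfrak n)\rangle$. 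Passing to the closed ideal $\mathfrak l:=\ker(\psi)\oplus\mathfrak n\unlhd\mathfrak g$ of Lemma~\ref{lem:ker(psi)+n} and to its $\sigma$-invariant normal integral subgroup $L:=\langle\exp_G(\mathfrak l)\rangle$, the decisive point — and the only place where hypothesis (c) is consumed — is Lemma~\ref{lem:lieSubgroupL}, which tells us that $L$ is in fact a Lie subgroup of $G$. Hence, by \cite[Th.~II.2]{GN03}, $G/L$ is a Banach--Lie group with $L(\pi)\colon\mathfrak g\to\mathfrak g/\mathfrak l$ the canonical quotient map ($\pi\colon G\to G/L$ being the quotient map), and by Lemma~\ref{lem:G/L} the triple $(G/L,\sigma_{G/L},\pi(K))$ is a symmetric pair, $\pi$ a morphism of symmetric pairs, and therefore $\bar\pi:=\Sym(\pi)\colon G/K\to (G/L)/\pi(K)$ a surjective morphism of pointed symmetric spaces.

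It then remains to recognize $(G/L)/\pi(K)$ as the desired symmetric space structure on $M/R$. Applying Proposition~\ref{prop:normalN=q(L)} to $\mathfrak l$ (note $\mathfrak l_-=\mathfrak l\cap\mathfrak g_-=\mathfrak n$) identifies $N=\langle\Exp_{G/K}(\mathfrak n)\rangle$ with $q(L)$ and exhibits it as the $K$-class of the congruence relation $\{(g_1K,g_2K)\colon g_1^{-1}g_2\in LK\}$; since $M$ is connected, a congruence relation is determined by any one of its classes, so this formula describes $R$. A short coset computation shows $\bar\pi(g_1K)=\bar\pi(g_2K)$ to be equivalent to $g_1^{-1}g_2\in LK$, i.e.\ to $(g_1K,g_2K)\in R$; thus $\bar\pi$ descends to a bijective homomorphism of reflection spaces $M/R\to (G/L)/\pi(K)$ whose inverse is automatically a homomorphism, and we transport the symmetric space structure, under which $\pi\colon M\to M/R$ corresponds to $\bar\pi$ and is in particular smooth. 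For the ``weak'' submersion property, at the base point $\Lts_K(\bar\pi)$ equals, via (\ref{eqn:LtsSym=calLtsL}), the map $\calLts(L(\pi))=L(\pi)|_{\mathfrak g_-}$, which under the identification $(\mathfrak g/\mathfrak l)_-\cong\mathfrak g_-/\mathfrak n$ is the canonical Banach space quotient map $\mathfrak g_-\to\mathfrak g_-/\mathfrak n$; at an arbitrary point $x=hK$, equation (\ref{eqn:SymTau}) gives $\bar\pi\circ\tau_h^G=\tau_{\pi(h)}^{G/L}\circ\bar\pi$ with $\tau_h^G$ an automorphism of $\calU(G/K)$ carrying $K$ to $x$ (such $h$ exists by Corollary~\ref{cor:G'(M,b)ActsTransitively}), so $\Lts_x(\bar\pi)$ is obtained from $\Lts_K(\bar\pi)$ by composing with topological isomorphisms on both sides and is hence again a quotient map. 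As a by-product one gets $\Lts(M/R,\pi(b))\cong\Lts(M,b)/\mathfrak n$.

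The main obstacle is the implication $\mathrm{(c)}\Rightarrow\mathrm{(b)}$, and inside it the promotion of the integral subgroup $L=\langle\exp_G(\mathfrak l)\rangle$ to a genuine Lie subgroup: this is the step that actually uses the assumption that $N$ is a submanifold and that makes the Banach--Lie-group quotient $G/L$ — and hence the symmetric pair $(G/L,\sigma_{G/L},\pi(K))$ — available. It has been isolated into Lemma~\ref{lem:lieSubgroupL}; granting that lemma, the remainder of $\mathrm{(c)}\Rightarrow\mathrm{(b)}$ is bookkeeping: matching $M/R$ with $(G/L)/\pi(K)$ and propagating the quotient-map property of $\Lts(\bar\pi)$ away from the base point by equivariance.
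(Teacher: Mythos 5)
Your proposal is correct and follows essentially the same route as the paper: the same implication cycle, the same identification of $(M,b)$ with $G^\prime(M,b)/G^\prime(M,b)_b$, the same ideal $\mathfrak{l}=\ker(\psi)\oplus\mathfrak{n}$, and the same decisive use of Lemma~\ref{lem:lieSubgroupL} and Lemma~\ref{lem:G/L} to form the symmetric pair $(G/L,\sigma_{G/L},\pi(K))$. The only (harmless) variation is in the bookkeeping that matches $M/R$ with $(G/L)/\pi(K)$: you identify $R$ via the explicit congruence relation of Proposition~\ref{prop:normalN=q(L)} and the kernel relation of $\Sym(\pi)$, whereas the paper computes the stabilizer $G_N=LG_b$ of the induced $G$-action on $M/R$; likewise your equivariance argument for the quotient-map property of $\Lts_x(\pi)$ at general points replaces the paper's direct factorization through $G\rightarrow G/L\rightarrow (G/L)/\pi(K)$.
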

\begin{proof}
	The implication (b)$\Rightarrow$(a) is trivial and the implication (a)$\Rightarrow$(c) follows by Corollary~\ref{cor:kernelOfMorphismOfPointedSymSpaces}.
	
	(c)$\Rightarrow$(b): We choose some base point $b\in N$. By Proposition~\ref{prop:LtsOfClosedNormalReflectionSubspace}, the Lie triple system $\mathfrak{n}\leq\Lts(M,b)$ of $(N,b)$ is a closed ideal. We can identify $(M,b)$ with the quotient $G/G_b$ of the symmetric pair $(G,\sigma,G_b):=(G^\prime(M,b),\sigma^\prime,G^\prime(M,b)_b)$ of Proposition~\ref{prop:M=G'(M,b)/G'(M,b)_b}, where $G$ is connected and has Lie algebra $$\mathfrak{g} \ =\ \mathfrak{g}_+\oplus\mathfrak{g}_- \ =\ \overline{[\mathfrak{g}_-,\mathfrak{g}_-]}\oplus \mathfrak{g}_-$$
	(cf.\ Definition~\ref{def:G'(M,b)}). We denote by $q\colon G\rightarrow G/G_b$ the natural quotient map. Having then the setting of Lemma~\ref{lem:lieSubgroupL}, we consider the normal Lie subgroup $L:=\langle\exp_{G}(\mathfrak{l})\rangle$ of $G$ with Lie algebra $\mathfrak{l}:=\ker(\psi)\oplus\mathfrak{n}\unlhd\mathfrak{g}$ (where $\psi\colon\mathfrak{g}_+\rightarrow \gl(\mathfrak{g}_-/\mathfrak{n})$ is the natural representation) and the quotient $(G/L)/(\pi_{G/L}(G_b))$ of the symmetric pair $(G/L,\sigma_{G/L},\pi_{G/L}(G_b))$, where $\pi_{G/L}\colon G\rightarrow G/L$ denotes the natural quotient map (cf.\ Lemma~\ref{lem:G/L}).
	
	Since $R\subseteq M\times M$ is a closed subset (cf.\ Lemma~\ref{lem:Nclosed<=>Rclosed}), the transitive smooth action $G\times M\rightarrow M$, $(g,x)\mapsto g(x)$ induces a transitive action $G\times M/R\rightarrow M/R$, $(g,[x])\mapsto g([x])$ by Lemma~\ref{lem:G'-actionOnM/R}.
	The stabilizer $G_N$ of $N\in M/R$ is given by $q^{-1}(N)$, since the orbit map $G\rightarrow M$, $g\mapsto g(b)$ corresponds to $q\colon G\rightarrow G/G_b$. Since we have $q(L)=N$ (cf.\ Proposition~\ref{prop:normalN=q(L)}), we obtain $G_N=LG_b$. Therefore, there is induced a bijective map $G/(LG_b)\rightarrow M/R$, $gLG_b\mapsto g(N)=[g(b)]$ and hence a bijective map 
	$$\Phi\colon (G/L)/(\pi_{G/L}(G_b)) \rightarrow M/R,\ gLG_b\mapsto [g(b)].$$
	It is an isomorphism of reflection spaces, since we have:
	\begin{eqnarray*}
		\Phi(gLG_b \cdot hLG_b) &=& \Phi(g\sigma(g)^{-1}\sigma(h)LG_b) \ =\ [g\sigma(g)^{-1}\sigma(h)G_b]\\
		 &=& [gG_b \cdot hG_b] \ =\ [gG_b]\cdot [hG_b],
	\end{eqnarray*}
	where we view $M$ as $G/G_b$. Thus $M/R$ inherits the structure of a symmetric space. The quotient map $\pi\colon M\rightarrow M/R$ corresponds to the map $G/G_b\rightarrow (G/L)/(\pi_{G/L}(G_b))$ that is induced by the natural map $G\rightarrow G/L\rightarrow (G/L)/(\pi_{G/L}(G_b))$ and is automatically smooth, since $G\mapsto G/G_b$ is a submersion. It is furthermore a weak submersion, since $\pi_{G/L}\colon G\mapsto G/L$ is a weak submersion and $G/L\rightarrow (G/L)/(\pi_{G/L}(G_b))$ is a submersion.
\end{proof}
\begin{remark}\label{rem:Lts(M/N)=m/n}
	The Lie triple system of the pointed symmetric space $M/N:=(M/R,N)$ is given by $\mathfrak{m}/\mathfrak{n}$, where, having chosen some base point $b\in N$, $\mathfrak{m}$ and $\mathfrak{n}\unlhd\mathfrak{m}$ are the Lie triple systems of $(M,b)$ and $(N,b)$, respectively. Indeed, the kernel $\ker(\pi):=\pi^{-1}(N)$\linebreak of the quotient map $\pi\colon (M,b)\rightarrow M/N$ is the symmetric subspace $(N,b)\unlhd (M,b)$ with Lie triple system $\mathfrak{n}=\ker(\Lts(\pi))$ (cf.\ Corollary~\ref{cor:kernelOfMorphismOfPointedSymSpaces}), so that $\Lts(M/N)$ is isomorphic to\linebreak $\mathfrak{m}/\ker(\Lts(\pi))=\mathfrak{m}/\mathfrak{n}$.
\end{remark}
\begin{remark} \label{rem:M/R withQuotientTopology}
	The symmetric space structure of $M/R$ is compatible with the quotient topology. To see this, it suffices to check that the quotient map $\pi\colon M\rightarrow M/R$ is open, equivalently, that for each $b\in M$, a neighborhood of $b$ is mapped onto a neighborhood of $\pi(b)$. Since we have $\pi\circ\Exp_{(M,b)}=\Exp_{(M/R,\pi(b))}\circ\Lts_b(\pi)$, this follows by the fact that the exponential maps $\Exp_{(M,b)}$ and $\Exp_{(M/R,\pi(b))}$ are local diffeomorphisms and $\Lts_b(\pi)$ is an open map.
\end{remark}

\begin{acknowledgements}
	I am grateful to Karl-Hermann Neeb for his helpful communications and proof reading during my research towards this article. This work was supported by the Technical University of Darmstadt and by the Studienstiftung des deutschen Volkes.
\end{acknowledgements}

\bibliography{paperD}
\bibliographystyle{amsalpha}
\end{document}